\newtheorem{prop}{Proposition}[section]
\newtheorem{thm}[prop]{Theorem}
\newtheorem{cor}[prop]{Corollary}
\newtheorem{lem}[prop]{Lemma}
\theoremstyle{definition}
\newtheorem{defn}[prop]{Definition}
\newtheorem{defn-lem}[prop]{Definition-Lemma}
\newtheorem{rem}[prop]{\it Remark}
\newtheorem{claim}[prop]{Claim}
\newtheorem*{claim*}{Claim}
\newtheorem{say}[prop]{}
\newcommand{\bR}{\mathbb{R}}
\newcommand{\bA}{\mathbb{A}}
\newcommand{\bQ}{\mathbb{Q}}
\newcommand{\bZ}{\mathbb{Z}}
\newcommand{\bN}{\mathbb{N}}
\newcommand{\hvol}{\widehat{\rm vol}}
\newcommand{\cD}{\mathcal{D}}
\newcommand{\cO}{\mathcal{O}}
\newcommand{\fa}{\mathfrak{a}}
\newcommand{\fb}{\mathfrak{b}}
\newcommand{\fm}{\mathfrak{m}}
\newcommand{\Spec}{\mathrm{Spec}~}
\newcommand{\mult}{\mathrm{mult}}
\newcommand{\lct}{\mathrm{lct}}
\newcommand{\vol}{\mathrm{vol}}
\newcommand{\ord}{\mathrm{ord}}
\newcommand{\Val}{\mathrm{Val}}
\newcommand{\QM}{\mathrm{QM}}
\newcommand{\Cosupp}{\mathrm{Cosupp}}
\numberwithin{equation}{section}
\begin{document}

\title{A Minimizing valuation is quasi-monomial}

\author{Chenyang Xu}
\address{Department of Mathematics, MIT, Cambridge, MA, USA, 02139.}
\email{cyxu@math.mit.edu}

\address{BICMR, Peking University, Beijing, China 100871.}
\email{cyxu@math.pku.edu}

\date{}

\maketitle
{\let\thefootnote\relax\footnotetext{CX is partially supported by a Chern Professorship of the MSRI (NSF No. DMS-1440140) and by the
NSF (No. 1901849). }
\marginpar{}
}

\begin{abstract}{
We prove a version of Jonsson-Musta\c{t}\v{a}'s Conjecture, which says for any graded sequence of ideals, there exists a quasi-monomial valuation computing its log canonical threshold. As a corollary, we confirm Chi Li's conjecture that a minimizer of the normalized volume function is always quasi-monomial. 

Applying our techniques to a family of klt singularities, we show that the volume of klt singularities is a constructible function. As a corollary, we prove that in a family of klt log Fano pairs, the K-semistable ones form a Zariski open set. Together with \cite{Jiang-bounded}, we conclude that all K-semistable klt Fano varieties with a fixed dimension and volume are parametrized by an Artin stack of finite type, which then admits a separated good moduli space by \cite{BX-uniqueness, ABHX-reductivity}, whose geometric points parametrize K-polystable klt Fano varieties.}
\end{abstract}

\setcounter{tocdepth}{1}
\tableofcontents
\section{Introduction}

Throughout this paper, we work over an algebraically closed field $k$ of characteristic 0. In this note, we use recent developments in birational geometry, especially results from the minimal model program, to study invariants of singularities which are of an asymptotic nature. We aim to get some uniform results which can not be obtained by previous methods. 

\subsection{The valuation computing the log canonical threshold}
Our first theorem is to prove that for any graded sequence of ideals, there always exists a quasi-monomial valuation which computes the log canonical threshold.
\begin{thm}[{\cite[Conjecture B]{JM-qmvaluations}}]\label{t-JMconjecture}
If $ (X,\Delta)$ is a Kawamata log terminal (klt) pair, and $\fa_{\bullet}:=\{\fa_k\}_{k\in \bN}$ is a graded sequence of ideals such that ${\rm lct}(\fa_{\bullet})<\infty$. 
Then there exists a quasi-monomial valuation $v$ which  calculates the log canonical threshold of $\fa_{\bullet}$, i.e.,
$$\frac{A_{X,\Delta}(v)}{v(\fa_{\bullet})}=\inf_w \frac{A_{X,\Delta}(w)}{w(\fa_{\bullet})},$$
where $w$ runs through all valuations whose center is on $X$. 
\end{thm}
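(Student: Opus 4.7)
The plan is to produce the quasi-monomial valuation as a limit of suitably normalized divisorial valuations $v_m=c_m\ord_{E_m}$ nearly computing $\lct(\fa_\bullet)$, after forcing the models that extract the $E_m$ into a single finite-dimensional dual complex via a boundedness theorem. After shrinking, we may assume $\fa_\bullet$ is cosupported at a closed point $x\in X$. Using Jonsson--Musta\c{t}\v{a}'s approximation $\lct(\fa_\bullet)=\lim_m m\cdot\lct(\fa_m)$, pick for each $m$ a divisorial valuation $\ord_{E_m}$ computing $\lct(\fa_m)$, chosen (via a general lc place argument) so that $\ord_{E_m}(\fa_\bullet)=\ord_{E_m}(\fa_m)/m$; normalizing $v_m=\ord_{E_m}/A_{X,\Delta}(\ord_{E_m})$ gives $A_{X,\Delta}(v_m)=1$ and $v_m(\fa_\bullet)\to 1/\lct(\fa_\bullet)$.

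Next, extract each $E_m$ on a projective $\bQ$-factorial birational model $\mu_m\colon Y_m\to X$ with $\Ex(\mu_m)=E_m$ (via the relative MMP). By the lct-computing property of $E_m$, the pair $(Y_m,\Delta_{Y_m}+E_m+\lct(\fa_m)\cdot \mu_m^{-1}\fa_m\cdot\cO_{Y_m})$ is log canonical with $E_m$ an lc place. The crucial step is to invoke Birkar's boundedness of complements, in its local form, to produce a uniform $N$-complement $\Gamma_m$ making $(Y_m,\Delta_{Y_m}+E_m+\Gamma_m)$ into an lc Calabi--Yau pair over $X$ with boundary coefficients in a fixed finite set; boundedness of such relative lc Calabi--Yau pairs (via BAB-type results) then places all the models $(Y_m,\Delta_{Y_m}+E_m)$ in a bounded family over $X$.

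After passing to a subsequence, all $Y_m$ are isomorphic over $X$ to a single model $Y$, and each $E_m$ lies among the finitely many components of a fixed reduced $E=\sum_i E_i\subset Y$. The $v_m$'s then sit inside the finite-dimensional simplicial complex $\QM(Y,E)$ of quasi-monomial valuations along strata of $E$. On $\QM(Y,E)$ the function $A_{X,\Delta}$ is piecewise linear (hence continuous), while $v\mapsto v(\fa_\bullet)$ is concave and upper semi-continuous via Izumi-type estimates; the compact set $\{A_{X,\Delta}\le 1\}\cap\QM(Y,E)$ therefore contains a subsequential limit $v_\infty$ with $A_{X,\Delta}(v_\infty)\le 1$ and $v_\infty(\fa_\bullet)\ge 1/\lct(\fa_\bullet)$, so $v_\infty$ is a quasi-monomial valuation computing $\lct(\fa_\bullet)$.

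The main obstacle is the boundedness step: controlling uniformly in $m$ the birational models extracting lct-computing divisors of $\fa_m$. This is where the heaviest MMP machinery (complement theory and BAB-type boundedness for lc Calabi--Yau pairs) must be deployed; by contrast, once a common model is available, the limit argument is a standard compactness statement on a dual complex. A secondary technical point is the initial choice of approximating $v_m$ so that $\ord_{E_m}(\fa_\bullet)=\ord_{E_m}(\fa_m)/m$, ensuring that $v_m$ is actually a near-optimizer for $\fa_\bullet$ and not merely for $\fa_m$.
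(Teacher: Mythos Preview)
Your overall architecture --- approximate by divisorial valuations computing $\lct(\fa_m)$, use Birkar's complements to control them uniformly, and extract a quasi-monomial limit --- is exactly the paper's strategy. But the boundedness step, which you correctly flag as the crux, is handled quite differently, and your version has a genuine gap.

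You assert that boundedness of the relative lc Calabi--Yau pairs $(Y_m,\Delta_{Y_m}+E_m+\Gamma_m)/X$ (which is itself not a known theorem; BAB is for Fano varieties, and there is no general boundedness for lc CY pairs) would let you pass to a subsequence on which all the $Y_m$ are \emph{isomorphic over $X$} to a single model $Y$, so that all the $v_m$ land in a single $\QM(Y,E)$. But boundedness only says the models lie in a family of finite type; it does not imply there are finitely many isomorphism classes, and in fact the plt blow-ups extracting different Koll\'ar components will typically move in positive-dimensional moduli. So there is no reason the $v_m$ sit on one fixed dual complex over $X$, and your final compactness argument on $\QM(Y,E)$ cannot start.

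The paper avoids this as follows. First, the $E_m$ must be taken to be Koll\'ar components (plt blow-ups with $-(K_{Y_m}+E_m+\Delta_{Y_m})$ ample over $X$), since Birkar's local complement theorem is stated for exactly that setting. Second, one does not bound the models $Y_m$; one bounds the \emph{complements on $X$}: the $N$-complements $\Psi_m$ are replaced, via an $\fm_x$-adic perturbation in the style of Koll\'ar and de~Fernex--Ein--Musta\c{t}\v{a}, by divisors drawn from a single finite-dimensional linear system on $X$. This gives a finite-type family $D\subset X\times V\to V$ of divisors such that each $S_m$ is an lc place of $(X,\Delta+\tfrac{1}{N}D_{u_m})$ for some $u_m\in V$. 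After stratifying and taking a fiberwise log resolution over $V$, the dual complexes over different points of $V$ are canonically identified, but as abstract complexes, not as subsets of $\Val_{X,x}$. Under this identification the $S_m$ correspond to divisors $T_m$ over the \emph{generic fibre} $X_K$, $K=K(V)$; these $T_m$ now genuinely lie on one dual complex and have a quasi-monomial limit $w\in\Val_{X_K}$. Finally one restricts $w$ along $K(X)\hookrightarrow K(X_K)$ and invokes the Zariski--Abhyankar inequality to conclude that the restriction $v$ is Abhyankar, hence quasi-monomial, on $X$. The passage to the generic fibre and the Abhyankar descent are the pieces your sketch is missing.
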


This confirms the weak version of  \cite[Conjecture B]{JM-qmvaluations}. However, our techniques does not directly give the strong version, which predicts {\it any} valuation $w$ computing the log canonical threshold of $\fa_{\bullet}$ is quasi-monomial. More precisely, for any such $w$, our approach produces a quasi-monomial valuation $v$ with $A_{X,\Delta}(v)=A_{X,\Delta}(w)$, $v\ge w$ and $v$ also computes the log canonical threshold of $\fa_{\bullet}$.

One consequence of the above theorem is the following statement. 
\begin{thm}[{\cite[Conjecture 7.1.3]{Li-minimizer}}]\label{t-minimizing}
Let $x\in (X,\Delta)$ be a klt singularity. Any minimizer $v^{\rm m}$ of the normalized volume function 
$$\hvol_{(X,\Delta),x}\colon \Val_{X,x}\to \bR_{>0}\bigcup\{+\infty\}$$ is quasi-monomial. 
\end{thm}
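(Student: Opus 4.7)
The plan is to deduce Theorem~\ref{t-minimizing} from Theorem~\ref{t-JMconjecture} via Chi Li's framework, which relates minimizers of $\hvol$ to log canonical thresholds of graded sequences of valuation ideals. Let $v^{\rm m}\in\Val_{X,x}$ be a minimizer (existence is due to Blum), and associate to it the graded sequence $\fa_\bullet=\{\fa_k\}$ with $\fa_k=\{f\in\cO_{X,x}:v^{\rm m}(f)\ge k\}$. The starting observation, which follows from Liu's inequality
\[\hvol(w)\ge \lct(\fa_\bullet(w))^n\cdot\mathrm{mult}(\fa_\bullet(w))\]
(with equality if and only if $w$ computes $\lct(\fa_\bullet(w))$), is that the minimizer $v^{\rm m}$ must compute the log canonical threshold of its own graded sequence $\fa_\bullet$.

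Next I would invoke the refined form of Theorem~\ref{t-JMconjecture}, stated in the paragraph following that theorem, applied to $\fa_\bullet$ with the distinguished computing valuation $w=v^{\rm m}$: it produces a quasi-monomial valuation $v$ satisfying $A_{X,\Delta}(v)=A_{X,\Delta}(v^{\rm m})$, $v\ge v^{\rm m}$, and such that $v$ also computes $\lct(\fa_\bullet)$. The inequality $v\ge v^{\rm m}$ gives $\fa_k(v^{\rm m})\subseteq \fa_k(v)$ for every $k$, hence $\vol(v)\le \vol(v^{\rm m})$; combined with $A_{X,\Delta}(v)=A_{X,\Delta}(v^{\rm m})$ this yields $\hvol(v)\le\hvol(v^{\rm m})$. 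The minimality of $v^{\rm m}$ then forces equality throughout, so $v$ is itself a quasi-monomial minimizer with the same log discrepancy and the same volume as $v^{\rm m}$.

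The final step, which I expect to be the main obstacle, is to conclude $v=v^{\rm m}$, so that $v^{\rm m}$ inherits quasi-monomiality from $v$. The cleanest route is to invoke a uniqueness statement for the minimizer of $\hvol$ up to positive rescaling: since the log discrepancy is homogeneous of degree one on $\Val_{X,x}$ and $A_{X,\Delta}(v)=A_{X,\Delta}(v^{\rm m})$, the rescaling factor must be $1$, giving $v=v^{\rm m}$. If the available form of uniqueness is insufficient in the generality needed here, a fallback is to exploit $v\ge v^{\rm m}$, $\vol(v)=\vol(v^{\rm m})$, and the fact that both $v$ and $v^{\rm m}$ compute $\lct(\fa_\bullet)$, in order to show that the graded sequences $\fa_\bullet(v)$ and $\fa_\bullet(v^{\rm m})$ agree at every level; since the graded sequence of valuation ideals (tested on all powers) determines a valuation with value group in $\bR$, this forces $v=v^{\rm m}$.
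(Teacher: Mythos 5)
Your proposal follows the same outline as the paper's proof: apply Theorem~\ref{t-JMconjecture} (in the refined form stated immediately after it) to the graded sequence $\fa^{v^{\rm m}}_\bullet$ of valuation ideals of the minimizer, obtain a quasi-monomial valuation $v$ computing $\lct(\fa^{v^{\rm m}}_\bullet)$ with $A_{X,\Delta}(v)=A_{X,\Delta}(v^{\rm m})$ and $v\ge v^{\rm m}$, and then identify $v$ with $v^{\rm m}$. The intermediate observations are sound: the minimizer computes the lct of its own graded sequence (via the Li--Liu inequality and the minimization formula), and $v\ge v^{\rm m}$ gives $\fa_k^{v^{\rm m}}\subseteq\fa_k^v$ so $\vol(v)\le\vol(v^{\rm m})$, whence $\hvol(v)=\hvol(v^{\rm m})$ by minimality.

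The genuine gap is the last step, concluding $v=v^{\rm m}$. Your first route, invoking uniqueness of the minimizer up to rescaling, is circular: at the time of this paper that uniqueness was only a conjecture (part of the Stable Degeneration package), and its eventual proof by Xu--Zhuang uses quasi-monomiality of the minimizer --- the very statement you are proving --- as an input. Your fallback route correctly isolates what must be shown, namely that $v\ge v^{\rm m}$ and $\vol(v)=\vol(v^{\rm m})$ force the graded sequences to agree level by level, but you do not actually prove it; and it is not obvious, since equality of the limits $\lim_k \ell(R/\fa_k)/(k^n/n!)$ is an asymptotic statement that does not by itself descend to equality of the individual ideals. The paper closes exactly this gap by citing \cite[Lemma 4.7]{Blu-Existence}, which states that for a minimizer $w$, any valuation computing $\lct(\fa^w_\bullet)$ is a positive rescaling of $w$. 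You should either cite that lemma or supply a proof of its content; the inequalities you derived are the natural first moves toward it, but they do not finish the argument as written.
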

This is one piece of a circle of conjectures about the minimizer of the normalized volume function $\hvol_{(X,\Delta),x}$, which are all together packed into the Stable Degeneration Conjecture (see \cite[Conj. 7.1]{Li-minimizer}, \cite[Conj. 1.2]{LX-SDCII}), and predict some deep information about an arbitrary klt singularity. We note that the Stable Degeneration Conjecture has been intensively studied (see e.g.  \cite{Li-K=M, Blu-Existence, LL-K=M, LX-SDCI, LX-SDCII}). Combining Theorem \ref{t-minimizing} with the previously known results, the main remaining part  is to show that for the quasi-monomial minimizer $v$, its associated graded ring is finitely generated. While this is known when the rational rank is one (\cite{LX-SDCI, Blu-Existence}), this is still open when the rational rank of $v$ is larger than one, except when $\dim(X)=2$ (see \cite[Prop. 1.4]{Cutkosky-twodimensional}).

Another application of Theorem \ref{t-JMconjecture} is that it finishes the algebraic approach, originated from \cite[Theorem D]{JM-openness}, of solving the Demailly-Koll\'ar's Openness Conjecture (see \cite{DK-lct}). Recall the Openness Conjecture says that  for any germ of a pluri-subharmonic function $\phi$ at a point $x$ on a complex manifold, denoted by $c_x(\phi)$ the complex singularity exponent of $\phi$ at $x$ (see \cite[Def. 0.1]{DK-lct}),  we have
$$\mbox{ if $c_x(\phi) <\infty$, then the function ${\rm exp}(-2c_x(\phi)\phi)$ is not locally integrable at $x$}.$$
 We note that the Openness Conjecture has been proved in \cite{Berndtsson-openness, GZ-openness} by completely different methods with more analytic nature. On the other hand, it seems that our approach can {\it not} yield the stronger version \cite[Conjecture C$^{\prime\prime}$]{JM-openness}, which would imply the Strong Openness Conjecture. Nevertheless, the latter is also proved in \cite{GZ-openness} (see also \cite{Hiep-openness, Lempert-openness}).

\bigskip

The proof of Theorem \ref{t-JMconjecture} depends on a combination of two sets of recently established techniques: The first one is approximating a valuation computing the log canonical threshold by a sequence of valuations which can be better understood using birational geometry. This idea is developed in \cite{LX-SDCI}. In particular, the proof of \cite[Theorem 1.3]{LX-SDCI} essentially implies that in Theorem \ref{t-JMconjecture}, we can find a valuation $v$ computing the log canonical threshold which can be always approximated by a sequence of rescalings of Koll\'ar components $S_i$ (see Definition \ref{d-kollarcomp}). Roughly speaking, Koll\'ar components are divisorial valuations over $x\in (X,\Delta)$ which admits a log Fano structure. 

The second main ingredient is the boundedness of complements which was recently established in \cite{Bir-BABI}. This difficult result together with an estimate established in \cite{Li-minimizer},  imply that all ${S_i}$ can be obtained as log canonical places of {\it a bounded family of $\mathbb{Q}$-Cartier divisors} on $(X,\Delta)$. From this boundedness, we then could conclude that the limit  is quasi-monomial. 

\subsection{The volume function of klt singularities is constructible}

Applying our techniques to a family of klt singularities also leads to a proof of the following result.
\begin{thm}\label{t-volumeconstruct}
For a $\bQ$-Gorenstein family of  klt singularities $(B\subset (X,\Delta))\to B$ over a smooth base, the volume function 
$$\hvol_B\colon B\to \bR_{>0} \qquad (s\to B) \to \hvol({s}, X_{{s}},\Delta_{{s}}),$$ 
which sends each geometric point $s$ to the volume of singularity over $s$, 
is constructible in Zariski topology.
\end{thm}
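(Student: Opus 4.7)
The plan is to use Noetherian induction on $B$ to reduce to showing that when $B$ is irreducible with generic point $\eta$, the function $\hvol_B$ is constant on a dense Zariski open subset $U \subset B$, then iterate on the complement. By Theorem \ref{t-minimizing} there is a quasi-monomial minimizer $v_\eta$ of $\hvol_{(X_\eta,\Delta_\eta),x_\eta}$, and the proof of Theorem \ref{t-JMconjecture} exhibits $v_\eta$ as a limit of rescalings of Koll\'ar components $S_i$ on $X_\eta$ with $\hvol(S_i) \to \hvol(x_\eta)$, these $S_i$ being log canonical places of a \emph{bounded} family of $\mathbb{Q}$-complements on $(X_\eta,\Delta_\eta)$ (via the boundedness of \cite{Bir-BABI} combined with the estimates of \cite{Li-minimizer}).

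For the upper bound $\hvol(x_s) \le \hvol(x_\eta) + \varepsilon$ on an open neighborhood, I would fix some $i$ with $\hvol(S_i) < \hvol(x_\eta)+\varepsilon$, spread out the birational modification extracting $S_i$ to a neighborhood of $\eta$, and note that the log discrepancy and volume of the resulting divisorial valuation $S_{i,s}$ over $x_s$ are locally constant along $B$, giving $\hvol(x_s) \le \hvol(S_{i,s}) = \hvol(S_i) < \hvol(x_\eta) + \varepsilon$. For the matching lower bound, I would run the entire construction of the proof of Theorem \ref{t-JMconjecture} in the relative setting over $B$: after shrinking $B$, produce a single scheme $T \to B$ of finite type parametrizing a bounded family $\mathcal{D}$ of $\mathbb{Q}$-complements on the relative family $\mathcal{X} \to B$, with the property that for every geometric point $s \in B$, the approximating Koll\'ar components for the minimizer on $X_s$ arise as log canonical places of divisors parametrized by $T_s$.

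Granted this relative bounded family, the function $t \mapsto \hvol(v_t)$ on $T$ is constructible, because both the log discrepancy $A_{\mathcal{X},\mathcal{D}}(v_t)$ and the volume $\vol(v_t)$ behave constructibly when $v_t$ is extracted by a bounded family of birational modifications with fiberwise klt log Fano structure. Stratifying $T$ by the values of this function and projecting to $B$ via Chevalley's theorem then produces a constructible function on $B$ which, by the matching upper and lower bounds above, equals $\hvol_B$. The hardest step is without doubt the relative version of boundedness: producing a single bounded family $T/B$ whose divisors simultaneously capture minimizing sequences of Koll\'ar components on \emph{every} fiber. This is the family analogue of the use of \cite{Bir-BABI} in the proof of Theorem \ref{t-JMconjecture}, and carrying it out in families---while keeping track of the normalization by the log discrepancy and the interplay with K-semistability of the associated log Fano cones---is where essentially all of the technical work lies.
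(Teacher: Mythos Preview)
Your outline identifies the right two ingredients---a relative bounded family of complements (the paper's Proposition~\ref{p-familykollar}) and invariance of the local volume along the family---but the organization into separate upper and lower bounds introduces a genuine gap, and you understate the difficulty of the volume step.

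For the upper bound you fix one Koll\'ar component $S_i$ with $\hvol(S_i)<\hvol(x_\eta)+\varepsilon$ and spread it out; this gives $\hvol(x_s)\le\hvol(x_\eta)+\varepsilon$ on an open set $U_\varepsilon$ that depends on $\varepsilon$. You never explain why $\bigcap_\varepsilon U_\varepsilon$ contains a nonempty open set, and in general it need not. The paper avoids this entirely: after stratifying the parameter space $V=\bigsqcup_\alpha V_\alpha$ and taking a fiberwise log resolution, it considers \emph{all} toroidal divisors $T$ in the dual complex $\cD(\Gamma_\alpha)$ at once. For each such $T$ the function $u\mapsto\hvol_{X_u,\Delta_u}(T_u)$ is constant on $V_\alpha$ (Theorem~\ref{t-invlocal}), hence so is the infimum $v_\alpha$ over the whole dual complex. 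After shrinking $B$ so that every $V_\alpha\to B$ is surjective, one gets $\hvol(s,X_s,\Delta_s)=\min_\alpha v_\alpha$ directly, with no $\varepsilon$ and no separate upper/lower bound. Your Chevalley argument runs into the same issue from the other side: the infimum of a constructible function over the fibers of $T\to B$ is not a priori constructible; the paper's reduction to finitely many strata, each surjecting onto $B$, is what makes this a finite minimum rather than a genuine infimum.

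Second, the assertion that ``the log discrepancy and volume of $S_{i,s}$ are locally constant along $B$'' is the real technical heart, and you treat it as routine. Log discrepancy is easy, but constancy of $\vol(\ord_{F_s})$ for a toroidal divisor $F$ with $A_{X,\Delta}(F)<1$ is Theorem~\ref{t-invlocal}, whose proof runs an MMP with scaling in the style of \cite{HMX-auto} to show that $\mu_*\cO_Y(-mF)\cdot\cO_{X_s}\cong(\mu_s)_*\cO_{Y_s}(-mF_s)$ for $m$ divisible. This is where the substance lies, not in the boundedness of complements (which is already packaged in \cite{Bir-BABI} and goes through in families with only cosmetic changes, via Lemma~\ref{l-proper}).
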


In \cite{BL-volumelower}, it is shown that $\hvol_B$ is lower semi-continuous. 
Combining with the cone construction, we obtain the following theorem. 

\begin{thm}\label{t-openness}
For a $\mathbb{Q}$-Gorenstein family of log Fano pairs $ (X,\Delta)\to B$ over a smooth base $B$, the locus $B^{\circ}\subset B$ which parametrises K-semistable geometric fibers form an open set. 
\end{thm}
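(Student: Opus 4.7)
The plan is to combine Theorem~\ref{t-volumeconstruct} with the lower semi-continuity of $\hvol$ from \cite{BL-volumelower}, via the cone construction, which translates K-semistability of a log Fano pair into an equality statement for the normalized volume at the vertex of its affine cone.

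First, I would perform a relative cone construction. Write $\pi\colon X\to B$ for the structure map; shrinking $B$ if needed, fix an integer $r\ge 1$ so that $L:=-r(K_{X/B}+\Delta)$ is very ample over $B$ and Cartier. Set
\[
Y\;:=\;\mathrm{Spec}_B\bigoplus_{m\ge 0}\pi_{*}\cO_X(mL),
\]
with induced boundary $\Delta_Y$ and zero section $\sigma\colon B\to Y$ sweeping out the family of cone vertices. Since each fiber $(X_s,\Delta_s)$ is a klt log Fano pair, the data $\sigma(B)\subset (Y,\Delta_Y)\to B$ is a $\bQ$-Gorenstein family of klt singularities of the kind considered in Theorem~\ref{t-volumeconstruct}. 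The cone correspondence developed in \cite{Li-minimizer, LL-K=M, LX-SDCI} then supplies an explicit function $c_0$ of $n=\dim X_s$, $r$, and $\vol(-K_{X_s}-\Delta_s)$ with
\[
\hvol\bigl(\sigma(s),\,Y_s,\,\Delta_{Y,s}\bigr)\;\le\;c_0\bigl(n,\,r,\,\vol(-K_{X_s}-\Delta_s)\bigr),
\]
and equality exactly when $(X_s,\Delta_s)$ is K-semistable. Because intersection numbers are locally constant in a flat proper family, the right-hand side is a locally constant function of $s\in B$.

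Putting the pieces together, set $f(s):=\hvol(\sigma(s),Y_s,\Delta_{Y,s})$. By Theorem~\ref{t-volumeconstruct}, $f$ is constructible on $B$ and hence takes only finitely many values; by \cite{BL-volumelower}, $f$ is lower semi-continuous. Since the upper bound $c_0$ is locally constant, on each connected component of $B$ define $c_1:=\max\{f(s):f(s)<c_0\}$ (and $c_1=-\infty$ if $f\equiv c_0$). Then
\[
B^{\circ}\;=\;\{s\in B:f(s)=c_0\}\;=\;\{s\in B:f(s)>c_1\},
\]
which is open by lower semi-continuity. The main obstacle is the bookkeeping for the cone construction: verifying that $\sigma(B)\subset (Y,\Delta_Y)\to B$ really is a $\bQ$-Gorenstein family of klt singularities in the precise sense needed by Theorem~\ref{t-volumeconstruct}, and that the characterization of K-semistability as equality in the cone volume inequality can be run uniformly across the family. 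Both of these should be assembled from existing results on K-stability of cones, but care is needed because the cone inequality is, a priori, pointwise and the present work puts the normalized volumes of all fibers into a single constructible function on $B$.
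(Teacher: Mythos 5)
Your proposal matches the paper's own proof essentially step for step: both form the relative cone $Y$ over $B$ with the vertex section giving a $\mathbb{Q}$-Gorenstein family of klt singularities, both invoke the cone inequality $\hvol(\sigma(s),Y_s,\Delta_{Y_s})\le \frac{1}{r}(-K_{X_s}-\Delta_s)^n$ with equality characterizing K-semistability (Proposition~\ref{p-cone}), and both combine the constructibility of $\hvol$ from Theorem~\ref{t-volumeconstruct} with the lower semi-continuity from \cite{BL-volumelower} to deduce that the locus where the (locally constant) maximum is attained is open. Your extra step introducing $c_1$ just makes explicit the routine deduction that a constructible, lower semi-continuous function bounded above by a locally constant function achieves its supremum on an open set.
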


The openness of uniform K-stability in a $\mathbb{Q}$-Gorenstein family of log Fano pairs was previously proved in \cite{BL-openness}.
Together with \cite{Jiang-bounded, BX-uniqueness, ABHX-reductivity}, we conclude the following theorem. 

\begin{thm}\label{t-goodmoduli}
Fix $n$ and $V$, the functor $\mathfrak{X}^{\rm kss}_{n,V}$ of  families of K-semistable $\bQ$-Fano varieties of dimension $n$ and volume $V$ is an Artin stack of finite type. 

Moreover, it admits a good moduli space ${X}^{\rm kps}_{n,V}$, whose geometric points parametrise K-polystable $\bQ$-Fano varieties.
\end{thm}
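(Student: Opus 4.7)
The plan is to assemble the three cited ingredients, using openness (Theorem \ref{t-openness}) as the bridge that turns the boundedness result into a finite-type parameter space. First I would invoke \cite{Jiang-bounded}: the family of K-semistable $\bQ$-Fano varieties of dimension $n$ and volume $V$ is bounded, so there exist integers $m, N$ such that $-mK_X$ is very ample with $h^0(-mK_X)=N+1$ for every such $X$, and all such $X$ embed into $\bP^N$ with Hilbert polynomial $P(t)=\chi(-tmK_X)$ independent of $X$. Consequently every such variety appears as a fiber of the universal family over a locally closed subscheme $Z$ of the Hilbert scheme $\mathrm{Hilb}^P(\bP^N)$, and $Z$ is of finite type. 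The group $G=\mathrm{PGL}_{N+1}$ acts on $Z$ and two points give isomorphic Fanos iff they are $G$-equivalent.

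Next I would cut down to the K-semistable locus. Let $Z^{\circ}\subset Z$ denote the locus parametrizing geometric fibers that are K-semistable $\bQ$-Fano. By Theorem \ref{t-openness} applied to the universal family over (a suitable smooth atlas of) $Z$, the subset $Z^{\circ}$ is open, hence a finite-type scheme. Since all K-semistable families with fixed $n, V$ arise this way up to $G$-action, the moduli functor is the quotient stack
\[
\mathfrak{X}^{\rm kss}_{n,V}=[Z^{\circ}/G].
\]
Because $Z^{\circ}$ is of finite type and $G$ is a smooth affine algebraic group acting on it, this quotient is an Artin stack of finite type, proving the first assertion.

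For the second assertion, I would appeal to the general existence criterion for good moduli spaces: a finite-type Artin stack over $k$ with affine (reductive) stabilizers admits a separated good moduli space provided it is both \emph{S-complete} and \emph{$\Theta$-reductive} (in the sense of Alper--Halpern-Leistner--Heinloth). The reductivity of automorphism groups of K-polystable points, together with $\Theta$-reductivity, is precisely the output of \cite{ABHX-reductivity}; S-completeness, i.e.\ uniqueness of K-polystable degeneration inside a family, is the main theorem of \cite{BX-uniqueness}. Combining these two inputs, the stack $\mathfrak{X}^{\rm kss}_{n,V}$ satisfies the hypotheses of the existence criterion, and therefore admits a separated good moduli space $X^{\rm kps}_{n,V}$. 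That the closed points of $\mathfrak{X}^{\rm kss}_{n,V}$ are exactly the K-polystable ones (again from \cite{BX-uniqueness,ABHX-reductivity}) identifies the geometric points of $X^{\rm kps}_{n,V}$ with K-polystable $\bQ$-Fano varieties.

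The genuinely new content being used is Theorem \ref{t-openness}, without which one cannot cut out the K-semistable locus inside the bounded family as an open subscheme; the rest of the argument is essentially a bookkeeping assembly of \cite{Jiang-bounded,BX-uniqueness,ABHX-reductivity}. The main obstacle is therefore the verification that the openness result of Theorem \ref{t-openness} can be applied uniformly to the universal family over an atlas of $Z$—that is, that one can work on a $\bQ$-Gorenstein family over a smooth base, which can be arranged by replacing $Z^{\circ}$ by a smooth cover and restricting to the klt, $\bQ$-Gorenstein locus (which by boundedness meets every component of interest).
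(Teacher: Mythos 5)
Your proposal is correct and follows essentially the same route as the paper, which likewise assembles \cite{Jiang-bounded} (boundedness), Theorem \ref{t-openness} (openness of K-semistability, replacing the role of \cite{BL-openness} in the uniformly K-stable case from \cite{BX-uniqueness}), and then \cite{BX-uniqueness,ABHX-reductivity} for S-completeness and $\Theta$-reductivity to obtain the separated good moduli space. The one technical point you flag at the end — applying openness over an atlas of $Z$ that may be non-reduced — is addressed in the paper not by passing to a smooth cover but by Remark \ref{r-nonreduced}: when $\Delta=0$ the openness argument works directly for locally stable families (Definition \ref{d-kollar}) over an arbitrary finite-type base, which is the cleaner way to get an open substack of $[Z/G]$.
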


We expect that ${X}^{\rm kps}_{n,V}$ is proper or even projective. 

\begin{rem}In the simultaneous work \cite{BLX-openness}, in a more global setting, i.e., for log Fano pairs $(X,\Delta)$, a similar strategy is applied to study the stability thresholds $\delta(X,\Delta)$. As a result, Theorem \ref{t-openness} as well as the consequence Theorem \ref{t-goodmoduli} are also proved there.
\end{rem}

\noindent {\bf Acknowledgement:} The author is grateful to Harold Blum, Mattias Jonsson, Chi Li,
Yuchen Liu,  Davesh Maulik, Mircea Musta\c{t}\v{a}, and Ziquan Zhuang for helpful
conversations. 
He also thanks Harold Blum for reading the first draft of the paper and giving valuable comments, as well as the anonymous referee for many suggestions to significantly improve the exposition. 
A large part of the
work on this paper was completed when CX visited MSRI. He thanks the institute for the wonderful environment. 
\section{Preliminaries}
\noindent {\bf Notation and Conventions:} We use the standard notation as in \cite{Laz-positivityII}, \cite{KollarMori98} and \cite{Kollar13}. In particular, for $X$ a normal variety, with a $\bQ$-divisor $\Delta$ on $X$, we define $(X,\Delta)$ to be {\it klt, lc} as in \cite[Definition 2.34]{KollarMori98}. For a log canonical pair $(X,\Delta)$, and a divisor $E$ over $(X,\Delta)$ its log discrepancy $A_{X,\Delta}(E)=1+a(E,X,\Delta)$ where $a(E,X,\Delta)$ is its discrepancy (see \cite[Definition 2.25]{KollarMori98}). We say a divisor $E$ over $X$ is a {\it log canonical place} if the log discrepancy $A_{X,\Delta}(E)=0$. We call $x\in X=\Spec R$ {\it a germ} if  $R$ is a local ring essentially of finite type over $k$ and $x$ is its closed point. When $x\in X$ is a germ of normal point, we can define $(X,\Delta)$ to be klt (resp. lc) for a $\bQ$-divisor in the obvious way. By abuse of notation, in this case, we say $x\in (X,\Delta)$ is a klt (resp. lc) singularity.

A projective pair $(X,\Delta)$ is called a {\it log Fano pair}, if $(X,\Delta)$ is klt and $-K_X-\Delta$ is ample. 

For a morphism $X\to B$ and a point $s\in B$, we will use $X_s$ to mean its fiber. 

Given a ring $R$, an $\bN$-graded sequence $\fa_{\bullet}=\{\fa_k\}_{k\in \bN}$  of ideals is a set of ideals $\fa_{k}\subset R$ $(k\in \bN)$ satisfying that $\fa_{k}\cdot \fa_{k'}\subset \fa_{k+k'}$. 
We will sometimes also include $\fa_0=R$ in a graded sequence of ideals. 

For two divisors If $D=\sum_i{d_i}D_i$ and $D'=\sum_id_i'D_i$,  we define $D\wedge D'=\sum_i\min\{d_i,d'_i\}D_i$.


\subsection{The space of valuations}
\subsubsection{Valuations}
Let $X$ be a reduced, irreducible (separated) variety defined over $k$. A \emph{real valuation} of its function field
$K(X)$ is a non-constant map $v\colon K(X)^{\times}\to \bR$, satisfying:
\begin{itemize}
 \item $v(fg)=v(f)+v(g)$;
 \item $v(f+g)\geq \min\{v(f),v(g)\}$;
 \item $v(k^*)=0$.
\end{itemize}
We set $v(0)=+\infty$. A valuation $v$ gives rise
to a valuation ring 
$$\cO_v:=\{f\in K(X)\mid v(f)\geq 0\}.$$
We say a real valuation $v$ is \emph{centered at} a scheme-theoretic
point $x=c_X(v)\in X$ if we have a local inclusion 
$\cO_{x,X}\hookrightarrow\cO_v$ of local rings.
Notice that the center of a valuation, if exists,
is unique since $X$ is separated. Denote by $\Val_X$ 
the set of real valuations of $K(X)$ that admits a center
on $X$. For a closed point $x\in X$, we denote by $\Val_{X,x}$ the set
of real valuations of $K(X)$ centered at $x\in X$. A valuation $v\in \Val_{X}$ is centered at $x\in X$ if $v(f)>0$ for any $f\in \fm_x$. 

For each valuation $v\in \Val_{X,x}$ and any positive integer $k$, we define the valuation ideal 
$$\fa^v_k:=\{f\in\cO_{x,X}\mid v(f)\geq k\}.$$ Then it is clear that $\fa^v_k$ is an $\fm_x$-primary ideal for each $k$ and $x=c_{X}(v)$.

\medskip

Given a valuation $v\in \Val_X$ and a nonzero ideal $\fa\subset\cO_X$, we may evaluate $\fa$ along $v$ by setting 
$$v(\fa) := \min\{v(f)\mid f \in \fa\cdot\cO_{c_X(v),X} \}.$$
It follows from the above definition that if $\fa\subset \fb \subset \cO_X$ are nonzero ideals, then $v(\fa) \geq v(\fb)$.
Additionally, $v(\fa)> 0$ if and only if $c_X(v) \in \Cosupp(\fa)$.
We endow $\Val_X$  with the weakest topology such that,
for every ideal $\fa$ on $X$, the map $\Val_X\to \bR\cup\{+\infty\}$ defined by $v\mapsto v(\fa)$ is continuous.
The subset $\Val_{X,x}\subset \Val_X$ is endowed with
the subspace topology. In some literatures, the space  $\Val_{X,x}$ is called the {\it non-archimedean link} of $x\in X$. For more background on valuation spaces, see \cite[Section 4]{JM-qmvaluations}.

We say two valuations $v\ge w$ if $v(\fa)\ge w(\fa)$ for any $\fa\subset \mathcal{O}_X$ (see \cite[Definition 4.3]{JM-qmvaluations}).
Let $D$ be a Cartier divisor on $X$ and $v$ a valuation, we can also define $v(D)$ to be $v(f)$ where $f$ is a defining equation of $D$ at $c_X(v)$. And similarly if $D$ is $\bQ$-Cartier, we can define $v(D)=\frac{1}{m}v(mD)$ for a sufficiently divisible positive integer $m$. For $\fa_{\bullet}=\{\fa_k\}_{k\in \bN}$, we define 
$$v(\fa_{\bullet})=\inf_{k}\frac{v(\fa_k)}{k}=\lim_{k\to \infty, \fa_k\neq 0}\frac{v(\fa_k)}{k}.$$


\medskip

Let $Y\xrightarrow[]{\mu} X$ be a proper birational morphism with $Y$  a normal variety. 
For a prime divisor $E$ on $Y$, we define a valuation $\ord_E\in \Val_X$ that sends each rational function in $K(X)^{\times}=K(Y)^{\times}$ to its order of vanishing along $E$. Note that 
the center $c_X(\ord_E)$ is the generic point of $\mu(E)$.
We say that $v\in \Val_X$ is a \emph{divisorial valuation} if there exists $E$ as above and $\lambda\in\bR_{>0}$ such that $v=\lambda\cdot \ord_E$.

\medskip

Next, we will introduce another important class of valuations which are called {\it quasi-monomial valuations}.
Let $\mu : Y \to X$ be a proper 
birational morphism and $\eta\in Y$ a point 
such that $Y$ is regular at $\eta$. Given a system of
parameters $y_1,\cdots, y_r \in \cO_{Y,\eta}$ at $\eta$
and $\alpha = (\alpha_1,\cdots,\alpha_r) \in \bR_{\geq 0}^r\setminus\{0\}$,
we define a valuation $v_\alpha$ as follows. For $f\in \cO_{Y,\eta}$
we can write it as $f =\sum_{\beta\in\bZ_{\geq 0}^r}c_\beta y^\beta$, with $c_\beta\in\widehat{\cO_{Y,\eta}}$  either zero or unit. We set
\begin{eqnarray}\label{e-quasimonomial}
v_\alpha(f) = \min\{\langle\alpha,\beta\rangle\mid c_\beta\neq 0\}.
\end{eqnarray}
A \emph{quasi-monomial valuation} is a valuation that can be written in the above form.

Let $(Y, E =\sum_{k=1}^N E_k)$ be a log smooth model of $X$, i.e. $Y$ is smooth, $E$ is snc  and  $\mu : Y \to X$ is an isomorphism outside  the support of $E$. We denote by $\QM_{\eta}(Y,E)$ the set of all
quasi-monomial valuations $v$ that can be described at the point $\eta\in Y$ with respect to
coordinates $(y_1,\cdots, y_r)$ such that each $y_i$ defines at $\eta$ an irreducible component of $E$ 
(hence $\eta$ is the generic point of a connected component of the intersection of some of the divisors $E_i$).
We put $\QM(Y,E):=\bigcup_{\eta}\QM_\eta(Y,E)\subset \Val_{X,x}$ where $\eta$ runs over
generic points of all irreducible components of intersections of 
some of the divisors $E_i$. Such a subspace $\QM(Y,E)$ can be naturally identified a cone over the dual complex $\cD(E)$ (see Definition \ref{d-dualcomplex}).

\bigskip

Given a valuation $v\in \Val_{X,x}$, its \emph{rational rank} ${\rm rat.rk}(v)$
is the rank of its value group. The \emph{transcendental degree}
${\rm trans}.\deg(v)$ of $v$ is the transcendental degree of the field extension
$k\hookrightarrow \cO_v/\fm_v$. 
Let $K$ be a field with transcendental degree $n$ over $k$, $k\subset K_0 \subset  K$ an intermediate field extension, $v$ a valuation on $K$
and $v_0$ its restriction to $K_0$. Then the Zariski-Abhyankar  inequality states that
\begin{eqnarray}\label{e-zainequ}
{\rm tr. deg}(v) + {\rm rat. rk}(v) \le {\rm tr. deg}(v_0) + {\rm rat. rk}(v_0) + {\rm tr. deg}(K/K_0). 
\end{eqnarray}
Taking $K_0=k$, we have
\[
 {\rm trans}.\deg(v)+{\rm rat.rk}(v)\leq n,
\]
and a valuation satisfying the equality is called an \emph{Abhyankar valuation}.
By \cite[Proposition 2.8]{ELS-localvolume}, we know that a valuation $v\in\Val_{X}$ is Abhyankar
if and only if it is quasi-monomial.

\bigskip

\subsubsection{Log discrepancy}

In the next, we give the definition of the log discrepancy $A_{X,\Delta}(v)$ (see Definition \ref{d-logd}).


\begin{defn}[Log discrepancy]\label{d-logd}
 Let $(X,\Delta)$ be a log canonical pair. We define the  \emph{(non-negative) log discrepancy
 function of valuations} $A_{X,\Delta}:\Val_X\to (0,+\infty]$
 in successive generality.
 \begin{enumerate}
  \item Let $\mu:Y\to X$ be a proper birational morphism from
  a normal variety $Y$. Let $E$ be a prime divisor
  on $Y$. Then we define $A_{X,\Delta}(\ord_E)=A_{X,\Delta}(E)$, i.e.
  \[
   A_{X,\Delta}(\ord_E):=1+\ord_E(K_Y-\mu^*(K_X+\Delta)).
  \]
  \item Let $(Y,E=\sum_{k=1}^N E_k)\to X$ be a log smooth model of $(X,\Delta)$, i.e. $Y$ is smooth and $E={\rm Supp}({\mu_*^{-1}}(\Delta)+{\rm Ex}(\mu))$ is snc. 
  Let $\eta$ be the generic point of a connected component of 
  $E_{i_1}\cap E_{i_2}\cap\cdots\cap E_{i_r}$ of codimension $r$. Let $(y_1,\cdots,y_r)$
  be a system of parameters of $\cO_{Y,\eta}$ at $\eta$ such that
  $E_{i_j}=(y_j=0)$. Then
  for any $\alpha=(\alpha_1,\cdots,\alpha_r)\in\bR_{\geq 0}^r\setminus\{0\}$, we define $A_{X,\Delta}(v_{\alpha})$ as
  \begin{eqnarray}\label{e-logdis}
   A_{X,\Delta}(v_\alpha):=\sum_{j=1}^r \alpha_j A_{X,\Delta}(\ord_{E_{i_j}}).
  \end{eqnarray}
  \item In \cite{JM-qmvaluations}, it was shown that there exists a retraction map 
  $$\rho_{Y}: \Val_X \to \QM(Y, E)$$ for any log smooth model $(Y, E)$ over $X$, such that it induces a
homeomorphism $\Val_X \to\varprojlim_{(Y,E)}\QM(Y, E)$. For any real valuation $v \in \Val_X$, we define
\begin{eqnarray}\label{e-logdis}
A_{X,\Delta}(v):=\sup_{(Y,E)} A_{X,\Delta}(\rho(v)),
\end{eqnarray}
where $(Y, E)$ ranges over all log smooth models over $(X,\Delta)$. For details, see \cite{JM-qmvaluations} and
\cite[Theorem 3.1]{BdFFU-valuation}. It is possible that $A_{X,\Delta}(v) = +\infty$ for some $v\in \Val_X$, see e.g. \cite[Remark 5.12]{JM-qmvaluations}.
\item For a lc pair $(X,\Delta)$ with an ideal $\fa\neq 0$ on $X$, for any $c\in \mathbb{Q}_{>0}$, we define 
$$A_{X,\Delta+c\cdot \fa}(v)=A_{X,\Delta}(v)-c\cdot v(\fa).$$	 
 \end{enumerate}
\end{defn}

In the above definition, if $(X,\Delta)$ is klt, then $A_{X,\Delta}$ is strictly positive on $\Val_{X}$. For a klt singularity $x\in (X,\Delta)$, we denote by $\Val^{=1}_{X,x}\subset  \Val_{X,x}$ the subspace consisting of all valuations  with $ A_{X,\Delta}(v)=1$.


\begin{defn}[Dual Complex]\label{d-dualcomplex}
For a simple normal crossing pair $(Y,E)$, we can form the dual complex $\cD(E)$ (see \cite[Definition 8]{dFKX-dualcomplex}). So every $E_i$ corresponds to a vertex $v_i$ and any component $Z$ of the intersection of $r$-components $E_{i_j}$ $(j=1,...r)$ corresponds to an $(r-1)$-cell 
$$W_Z:=\{x=(a_1,...,a_r)\in \mathbb{R}_{\ge 0}^{r}\ |\ \sum^r_{i=1}a_i=1\}$$ glued on $v_{j_1},..., v_{j_r}$. 

As a special case, for a log resolution $Y\to (X,\Delta)$ with the a set of exceptional divisors $E=\sum^r_{i=1}E_{i}$ over $x$, if $(X,\Delta)$ is klt, then there is a natural  embedding of 
$$i_{X,\Delta}\colon \cD(E) \to \Val^{=1}_{X,x}$$ by sending $v_i\to \frac{1}{A_{X,\Delta}(E_i)}\ord_{E_i}$ and the point on $W_Z$  with coordinates $(a_1,\cdots,a_r)$
$\big(\sum^r_{i=1}{a_i}=1\big)$  to the quasi-monomial valuation $v_{\alpha}\in \Val_{X,x}$ defined in \eqref{e-quasimonomial} where 
$\alpha=(\frac{a_1}{A_{X,\Delta}(E_1)},..., \frac{a_r}{A_{X,\Delta}(E_r)}).$
 \end{defn}
 
 Then the cone $i_{X,\Delta}\big(\cD(E)\big)\times \mathbb{R}_{>0}\subset \Val_{X,x}$ consists of all valuations $v$ such that $\lambda\cdot v\in i_{X,\Delta}(\cD(E))$ for some rescaling $\lambda\in \mathbb{R}_{>0}$.

\subsection{Log canonical thresholds and Koll\'ar components}

\subsubsection{Log canonical thresholds}

\begin{defn}Given a non-zero ideal $\fa$ on a log canonical pair $(X,\Delta)$, we call $c$ to be the {\it log canonical threshold} $c:=\lct(X,\Delta;\fa)$ if 
$$c=\max_{t>0} \{\ t\ | \ (X,\Delta+t\cdot \fa) \mbox{ is log canonical, i.e. }A_{X,\Delta+t\cdot \fa}(v)\ge 0 \mbox{ for any }v \}$$
\end{defn}

We call any valuation $v$ satisfying that $A_{X,\Delta}(v)=c\cdot v(\fa)$ {\it a valuation that computes the log canonical threshold of $(X,\Delta)$ with respect to $\fa$}.

We have the following well known lemma. 

\begin{lem}\label{l-resolution}
Let $(X,\Delta)$ be a log canonical pair and $c=\lct(X,\Delta;\fa)$.   Let $\mu\colon Y\to (X,{\rm Supp}(\Delta)\cup Z(\fa))$ be a log resolution where $Z(\fa)$ is the subscheme defined by $\fa$. Denote by $\mu^*(\fa)=\mathcal{O}_Y(-E)$. Define $\Delta_Y$ by $K_Y+\Delta_Y:=\mu^*(K_X+\Delta)+cE.$ 

Then valuations $v$ that compute the log canonical threshold of $(X,\Delta)$ are precisely given by the points on the space
$$i_{X,\Delta}\big(\cD(\Gamma)\big)\times {\mathbb{R}_{>0}}\subset \Val_{X},$$ where $\Gamma\subset {\rm Supp}(\Delta_Y)$ consists of all the components in $\Delta_Y$ with coefficient 1.
\end{lem}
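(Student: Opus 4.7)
The plan is to reduce the statement to identifying the log canonical places of the auxiliary pair $(X,\Delta+c\cdot\fa)$. Using the definition $A_{X,\Delta+c\fa}(v)=A_{X,\Delta}(v)-c\cdot v(\fa)$, a valuation $v$ computes the lct precisely when $A_{X,\Delta+c\fa}(v)=0$, because $c$ being the lct means $(X,\Delta+c\fa)$ is log canonical and hence $A_{X,\Delta+c\fa}\geq 0$ everywhere on $\Val_X$. So I just need to describe the lc places of $(X,\Delta+c\fa)$ on the fixed log resolution $\mu\colon Y\to X$.

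First I will do the computation for quasi-monomial valuations. For any divisor $F$ on $Y$, the identity $K_Y+\Delta_Y=\mu^*(K_X+\Delta)+cE$ together with $\ord_F(\fa)=\ord_F(E)$ immediately gives
\[
A_{X,\Delta+c\fa}(\ord_F)=1-\mathrm{coeff}_F(\Delta_Y).
\]
By the linearity of log discrepancy along quasi-monomial valuations in \eqref{e-logdis}, for $v_\alpha\in \QM_\eta(Y,E)$ at a generic point $\eta$ of $E_{i_1}\cap\cdots\cap E_{i_r}$ we get
\[
A_{X,\Delta+c\fa}(v_\alpha)=\sum_{j=1}^r \alpha_j\bigl(1-\mathrm{coeff}_{E_{i_j}}(\Delta_Y)\bigr).
\]
Each summand is nonnegative by the log canonicity of $(X,\Delta+c\fa)$, so the sum vanishes exactly when every $E_{i_j}$ with $\alpha_j>0$ has coefficient $1$ in $\Delta_Y$, i.e.~lies in $\Gamma$. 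Rescaling by $A_{X,\Delta}(v)$ to the normalized slice $\Val^{=1}_{X,x}$ realizes this locus as $i_{X,\Delta}\bigl(\cD(\Gamma)\bigr)\times\mathbb{R}_{>0}$, matching the statement.

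The one substantive point, and the main obstacle, is showing that every lc place of $(X,\Delta+c\fa)$ is already quasi-monomial, not merely every quasi-monomial one I found. I will invoke the Jonsson--Musta\c{t}\v{a} retraction $\rho_Y\colon \Val_X\to \QM(Y,E)$ from \cite{JM-qmvaluations}: since $\fa\cdot\cO_Y=\cO_Y(-E)$ is invertible with support in $E$, one has $\rho_Y(v)(\fa)=v(\fa)$; together with the monotonicity $A_{X,\Delta}(\rho_Y(v))\leq A_{X,\Delta}(v)$, with equality iff $v=\rho_Y(v)$, this yields
\[
A_{X,\Delta+c\fa}(\rho_Y(v))\leq A_{X,\Delta+c\fa}(v).
\]
If $v$ computes the lct, the right side equals $0$, the left side is $\geq 0$ by log canonicity, so both vanish and the equality case forces $v=\rho_Y(v)\in\QM(Y,E)$. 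Combined with the first paragraph this places $v$ in $i_{X,\Delta}(\cD(\Gamma))\times\mathbb{R}_{>0}$, completing the proof. Conversely, any $v$ in that locus has $A_{X,\Delta+c\fa}(v)=0$ by the same quasi-monomial formula, so it does compute the lct.
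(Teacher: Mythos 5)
Your proof is correct and follows essentially the same route as the paper: you establish the classification on quasi-monomial valuations via the coefficient computation for $\Delta_Y$, then reduce the general case to the quasi-monomial one by applying the Jonsson--Musta\c{t}\v{a} retraction $\rho_Y$, using $\rho_Y(v)(\fa)=v(\fa)$ together with $A_{X,\Delta}(\rho_Y(v))\le A_{X,\Delta}(v)$ (with equality forcing $v=\rho_Y(v)$, cf.~\cite[Corollary 5.4]{JM-qmvaluations}). The only cosmetic difference is that you phrase the lct-computing condition as $A_{X,\Delta+c\fa}(v)=0$ rather than $A_{X,\Delta}(v)=c\cdot v(\fa)$, which are the same by the paper's Definition~\ref{d-logd}(4).
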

\begin{proof}The case when $v$ is a divisorial valuation follows from \cite[Corollary 2.31]{KollarMori98}. 

When $v$ is quasi-monomial, we can assume the model $Y_v$ in Definition \ref{d-logd}(2) is a log resolution of $(X,{\rm Supp}(\Delta)\cup Z(\fa))$. Let the center of $v$ be a generic point of the intersection of $\bigcap^r_{j=1} E_{j} $, and assume $v=v_{\alpha}$ where $\alpha=(\alpha_1,...,\alpha_r)$ with $\alpha_j>0$ for all $1\le j \le r$. By \eqref{e-logdis}, and $v_{\alpha}(\fa)=\sum^r_{j=1}{\alpha_{j}}\ord_{E_{j}}(\fa)$, we know that 
$$A_{X,\Delta}(v)=\sum^r_{j=1}\alpha_jA_{X,\Delta}(E_j)\ge \sum^r_{j=1}c\cdot\alpha_j \ord_{E_j}(\fa) =c\cdot v(\fa)$$ and the equality holds if and only if $A_{X,\Delta}(E_{j})=c\cdot \ord_{E_{j}}(\fa)$ for all $j$.

Finally, for a general valuation $v$, we consider the quasi-monomial valuation $\rho_Y(v)$. Then we know that 
$v(\fa)=v(E)=\rho_Y(v)(E)$, and $A_{X,\Delta}(\rho_Y(v))\le A_{X,\Delta}(v)$. Thus we know  $A_{X,\Delta}(\rho_Y(v))= A_{X,\Delta}(v)$ which implies $\rho_Y(v)=v$ (see \cite[Corollary 5.4]{JM-qmvaluations}). 
\end{proof}

For a graded sequence $\fa_{\bullet}=\{\fa_k\}_{k\in \bN}$ of non-zero ideals on a klt pair, we can also define its log canonical threshold 
$$\lct(X,\Delta; \fa_{\bullet}):=\limsup_k \lct(X,\Delta; \frac{1}{k}\fa_k)\in [0,+\infty]. $$
It was shown in \cite[Corollary 6.9]{JM-qmvaluations} that 
$$\lct(X,\Delta; \fa_{\bullet})=\inf_{w\in \Val_{X}}\frac{A_{X,\Delta}(w)}{w(\fa_{\bullet})}.$$
Moreover, by \cite[Theorem A and Theorem 7.3]{JM-qmvaluations},  if $c:=\lct(X,\Delta; \fa_{\bullet})<+\infty$, then there always exists a valuation $v$ satisfying $A_{X,\Delta}(v)=c\cdot v(\fa_{\bullet})$, i.e.,
$$\frac{A_{X,\Delta}(v)}{v(\fa_{\bullet})}=\inf_{w\in \Val_X} \frac{A_{X,\Delta}(w)}{w(\fa_{\bullet})} .$$
However, if $\fa_{\bullet}$ is not finitely generated, we usually can not expect that the log canonical threshold of $\fa_{\bullet}$ is computed by a divisorial valuation $v$ (see e.g. \cite[Example 8.5]{JM-qmvaluations}).

\begin{lem}[{\cite[Proposition 7.10]{JM-qmvaluations}}]\label{l-enlarge}
Let $(X,\Delta)$ be a klt pair, and $\fa_{\bullet}$ a graded sequence of  ideals with $\lct(X,\Delta; \fa_{\bullet})<\infty$. Assume a valuation $v$ computes its log canonical threshold with $v(\fa_{\bullet})=1$. Then if we let $\fa^v_{\bullet}=\{\fa^v_k\}_{k\in \bN}$ be the graded sequence of valuation ideals associated to $v$,  any valuation which computes the log canonical threshold of $\fa^v_{\bullet}$ must also compute the log canonical threshold of $\fa_{\bullet}$.
\end{lem}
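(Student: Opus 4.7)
The plan is to compare the two graded sequences $\fa_{\bullet}$ and $\fa^v_{\bullet}$ directly, exploit the containment $\fa_k\subset \fa^v_k$ forced by the hypothesis, and then check that $v$ continues to be a minimizer for the enlarged sequence so that $\lct(X,\Delta;\fa^v_{\bullet})=\lct(X,\Delta;\fa_{\bullet})$.

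First I would upgrade the normalization $v(\fa_{\bullet})=1$ to the pointwise bound $v(\fa_k)\ge k$ for every $k$. Since $\fa_k\cdot \fa_{k'}\subset \fa_{k+k'}$, the numbers $v(\fa_k)$ form a subadditive sequence, and Fekete's lemma gives $v(\fa_{\bullet})=\inf_k v(\fa_k)/k$. Thus $v(\fa_k)/k\ge 1$, i.e.\ $\fa_k\subset \fa^v_k$. Consequently, for every valuation $w\in \Val_X$ we have $w(\fa_k)\ge w(\fa^v_k)$ and, letting $k\to\infty$, $w(\fa_{\bullet})\ge w(\fa^v_{\bullet})$. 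Dividing the log discrepancies and taking infima gives the first inequality
\[
\lct(X,\Delta;\fa_{\bullet})\;\le\;\lct(X,\Delta;\fa^v_{\bullet}).
\]
For the opposite direction, $v(\fa^v_k)\ge k$ by the very definition of the valuation ideal, so $v(\fa^v_{\bullet})\ge 1$. Writing $c:=\lct(X,\Delta;\fa_{\bullet})=A_{X,\Delta}(v)$, this yields
\[
\lct(X,\Delta;\fa^v_{\bullet})\;\le\;\frac{A_{X,\Delta}(v)}{v(\fa^v_{\bullet})}\;\le\;A_{X,\Delta}(v)=c,
\]
so in fact $\lct(X,\Delta;\fa^v_{\bullet})=c$ and $v(\fa^v_{\bullet})=1$; in particular $v$ itself computes the lct of $\fa^v_{\bullet}$.

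Now assume $w$ computes the lct of $\fa^v_{\bullet}$, that is $A_{X,\Delta}(w)=c\cdot w(\fa^v_{\bullet})$. Combining with $w(\fa^v_{\bullet})\le w(\fa_{\bullet})$ (from the first step) gives $A_{X,\Delta}(w)\le c\cdot w(\fa_{\bullet})$, i.e.\ $A_{X,\Delta}(w)/w(\fa_{\bullet})\le c$. Since $c=\lct(X,\Delta;\fa_{\bullet})$ is a universal lower bound for such quotients by \cite[Cor.\ 6.9]{JM-qmvaluations}, we must have equality, which is exactly the statement that $w$ computes $\lct(X,\Delta;\fa_{\bullet})$.

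There is no real obstacle here: the argument is formal once one observes the containment $\fa_k\subset \fa^v_k$. The only point that requires a moment of care is the Fekete step converting $v(\fa_{\bullet})=1$ into $v(\fa_k)\ge k$ for every individual $k$; without this, one would only know the containment asymptotically and could not directly compare $w(\fa_k)$ with $w(\fa^v_k)$ term-by-term.
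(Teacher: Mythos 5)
Your proof is correct and follows essentially the same route as the paper's: both hinge on the containment $\fa_k\subset \fa^v_k$ (which follows immediately from $v(\fa_\bullet)=\inf_k v(\fa_k)/k=1$, so the Fekete step is not actually needed beyond the definition), deduce $w(\fa_\bullet)\ge w(\fa^v_\bullet)$ for all $w$, and compare ratios $A_{X,\Delta}(w)/w(\cdot)$. Your version is marginally cleaner in the last step (you avoid normalizing the competitor valuation), but the substance is identical.
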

\begin{proof}Since $1=v(\fa_{\bullet})=\inf \frac{1}{k}v(\fa_k)$, we know that $\fa_{k}\subset \fa^v_{k}$. Thus, for any $w\in \Val_{X,x}$, since $w(\fa_{\bullet})\ge w(\fa^v_{\bullet})$, we have
$$A_{X,\Delta}(v)\le \frac{A_{X,\Delta}(w)}{w(\fa_{\bullet})}\le \frac{A_{X,\Delta}(w)}{w(\fa^v_{\bullet})}. $$
Since $v(\fa^v_{\bullet})=1$, this implies that $v$ also computes the log canonical threshold of $\fa^v_{\bullet}$.

Moreover, for any $v'$ computes the log canonical threshold of $\fa^v_{\bullet}$, we may assume $v'(\fa^v_{\bullet})=1$, thus $A_{X,\Delta}(v)=A_{X,\Delta}(v')$, and since 
$v'(\fa^v_{\bullet})\le v(\fa_{\bullet})$, we conclude that  $v'(\fa_{\bullet})=1$ and $v'$ also computes the log canonical threshold of $\fa_{\bullet}$.
\end{proof}

\subsubsection{Koll\'ar components}

The following special type of valuations  will play a central role in our work. 

\begin{defn}[Koll\'ar Components]\label{d-kollarcomp}
Let $x\in (X,\Delta)$ be a klt singularity. A prime divisor $S$ over $(X,\Delta)$ is a {\it Koll\'ar component} if there is a birational morphism $\mu\colon Y\to X$ of $(X,\Delta)$ such that $\mu$ is an isomorphism over $X\setminus \{x\}$, $\mu^{-1}(x)=S$ is $\bQ$-Cartier and if we write
$$(K_Y+\mu_*^{-1}\Delta+S)|_S=K_S+\Delta_S,$$
(see \cite[Definition 4.2]{Kollar13} for the meaning), then $(S,\Delta_S)$ is a log Fano pair. 

By inversion of adjunction, this is equivalent to saying that $(Y,\mu^{-1}_*\Delta+S)$ is plt and $K_Y+\mu^{-1}_*\Delta+S\sim_{X,\bQ}A_{X,\Delta}(S)\cdot S$ is anti-ample over $X$. 
\end{defn}
Such a morphism $\mu$ is called  {\it a plt blow up} (see \cite{Prok-plt}), as $(Y,\mu_*^{-1}\Delta+S)$ is a plt pair. Inspired by the construction in \cite{Xu-kollar} and  the special test configuration construction in \cite{LX-specialTC} in the global setting, Koll\'ar components were systematically used to study various functions on the space of valuations in the local setting in \cite{LX-SDCI}.
\begin{lem}\label{l-kollarcompute}
Let $x\in (X,\Delta)$ be a klt singularity, and $c=\lct(X,\Delta;\fa)$ from some $\fm_x$-primary ideal $\fa$. 
Then there exists a Koll\'ar component over $x\in (X,\Delta)$ which computes the log canonical thresholds of $(X,\Delta)$ with respect to $\fa$.
\end{lem}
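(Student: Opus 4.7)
The plan is to realise a suitable log canonical place of $(X,\Delta+c\cdot\fa)$ centred at $x$ as a Koll\'ar component via the minimal model program.

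First, take a log resolution $\mu\colon Y\to(X,\mathrm{Supp}(\Delta)\cup Z(\fa))$ and write $\mu^*\fa=\cO_Y(-E)$ together with
$$K_Y+\Delta_Y=\mu^*(K_X+\Delta)+cE$$
as in Lemma~\ref{l-resolution}. Since $\fa$ is $\fm_x$-primary, the support of $E$ is contained in $\mu^{-1}(x)$, so every component of $\lfloor\Delta_Y\rfloor$ is $\mu$-exceptional with centre $x$ on $X$. By the definition of $c=\lct(X,\Delta;\fa)$ at least one such component $F\subset\lfloor\Delta_Y\rfloor$ exists, and applying Lemma~\ref{l-resolution} to the divisorial valuation $\ord_F$ gives $A_{X,\Delta}(F)=c\cdot\ord_F(\fa)$.

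Next, I would extract $F$ as the unique divisor over $x$. Lower the coefficients of the components of $\lfloor\Delta_Y\rfloor$ other than $F$ by a small amount to obtain a boundary $\Delta'_Y$ on $Y$ such that $(Y,\Delta'_Y)$ is klt outside $F$ and has $F$ as its unique non-klt place, and $F$ is still a log canonical place of $(Y,\Delta'_Y)$ with $A_{X,\Delta}(F)=c\cdot\ord_F(\fa)$. Running an appropriate relative MMP of $(Y,\Delta'_Y)$ over $X$ contracts exactly the $\mu$-exceptional divisors distinct from $F$; by the standard finite generation/termination results for klt pairs this MMP terminates. The output is a birational morphism $\pi\colon Y'\to X$ that is an isomorphism over $X\setminus\{x\}$, with $\pi^{-1}(x)=F$ (abusing notation for the strict transform), such that $F$ is $\bQ$-Cartier, $(Y',\pi^{-1}_*\Delta+F)$ is plt, and
$$K_{Y'}+\pi^{-1}_*\Delta+F\sim_{X,\bQ} A_{X,\Delta}(F)\cdot F$$
is anti-ample over $X$. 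In the sense of Definition~\ref{d-kollarcomp} this exhibits $F$ as a Koll\'ar component over $x\in(X,\Delta)$, and since both $A_{X,\Delta}(F)$ and $\ord_F(\fa)$ depend only on the valuation $\ord_F$ and are unchanged by the MMP, $F$ still computes $\lct(X,\Delta;\fa)$.

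The main obstacle is arranging the MMP so that it contracts \emph{every} $\mu$-exceptional divisor other than $F$ while keeping $F$ as a $\bQ$-Cartier divisor with the required anti-ampleness and plt adjunction. This is precisely the problem of extracting a single lc place as a plt blow-up, a standard but non-trivial consequence of the machinery of the minimal model program used in the earlier works on Koll\'ar components (cf.\ \cite{Xu-kollar, LX-SDCI}); with this extraction in hand the remaining verifications are direct.
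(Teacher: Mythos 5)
The overall strategy --- realize a chosen log canonical place $F$ of $(X,\Delta+c\cdot\fa)$ centered at $x$ as a plt blow-up via the MMP --- is exactly the approach behind the paper's cited reference \cite[Proposition 2.10]{LX-SDCI}, and you correctly flag the extraction step as the crux. However, the specific perturbation you propose does not do what you claim, and this is a genuine gap rather than a deferred detail.

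With your $\Delta'_Y$ one has
\[
K_Y+\Delta'_Y \;=\; \mu^*(K_X+\Delta) + cE + (\Delta'_Y-\Delta_Y) \;\equiv_{X}\; cE - \textstyle\sum_{i}\epsilon_i G_i + (\text{nonnegative correction}),
\]
where the $G_i$ are the lc places other than $F$ whose coefficients you lowered. Since $\fa$ is $\fm_x$-primary, $E=\mu^{-1}(\fa)$ has strictly positive multiplicity along every $\mu$-exceptional divisor over $x$, in particular $\ord_F(E)=\ord_F(\fa)>0$. As your perturbation does not touch $F$, the divisor class $K_Y+\Delta'_Y$ over $X$ is numerically an effective $\mu$-exceptional divisor whose support contains $F$ with coefficient $c\cdot\ord_F(\fa)=A_{X,\Delta}(F)>0$. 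By the negativity lemma, such a divisor lies in the relative stable base locus, so the $(K_Y+\Delta'_Y)$-MMP over $X$ contracts $F$ along with all the other exceptionals and terminates at (a $\bQ$-factorialization of) $X$ itself --- not at a model where $F$ survives with $-F$ relatively ample. The same problem persists for any boundary that keeps $F$ at coefficient $1$: the coefficient of $F$ in $K_Y+B-\mu^*(K_X+\Delta)$ equals $A_{X,\Delta}(F)>0$. The correct extraction of $F$ as a plt blow-up cannot be done by this naive MMP; one instead extracts $F$ via \cite[Corollary 1.4.3]{BCHM} applied to the klt pair $(X,\Delta+(c-\delta)\fa)$ for small $\delta>0$, using that $A_{X,\Delta+(c-\delta)\fa}(F)=\delta\cdot\ord_F(\fa)\in(0,1)$, and then verifies $\bQ$-Cartierness, anti-ampleness, and the plt condition (the last via adjunction and connectedness) by separate arguments. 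So while you point at the right references, the MMP step as you sketch it would actively fail, and the actual argument is not merely ``arranging'' your perturbation but a different construction.
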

\begin{proof}See \cite[Propsition 2.10]{LX-SDCI}.
\end{proof}

For a graded sequence $\fa_{\bullet}=\{\fa_{k}\}_{k\in \bN}$ of ideals with a finite log canonical threshold, we have an approximation type result by Koll\'ar components, see Proposition \ref{p-approxlc}. 

\subsection{Family of pairs}

Let $X\to B$ be a flat family with geometric integral fibers, and $D$ a family of Weil divisors on $X$, i.e. ${\rm Supp}(D)$ does not contain any $X_s$. Then we call  $(Y,E)/B\to (X,D)/B$ to be {\it a fiberwise log resolution} of $(X,D)/ B$ where $E$ is the sum of the birational transform of $D$ and the exceptional divisor ${\rm Ex}(Y/X)$, if for each $s\in B$, $(Y_s,E_s)\to (X_s,D_s)$ is a log resolution and any strata of $(Y,E)$, i.e., a component of the intersection $\cap E_i$ for components $E_i$ of $E$, has geometric irreducible fibers over $B$.

\begin{defn-lem}\label{l-logresol}
Let $X$ be a variety over a finite type base $B$ with geometrically integral fibers, and $D\subset X$ a codimension one subvariety which does not contain any fiber of $X\to B$. 
Then we can stratify $B$ into a union of finitely many constructible subsets $B'_{\alpha}$, such that over each $B'_{\alpha}$, $(X, D)\times_B B'_{\alpha}$ admits a log resolution $\mu_{\alpha}\colon Y_{\alpha}\to (X, D)\times_B B'_{\alpha}$ i.e. $(Y_{\alpha},E_{\alpha}:={\rm Ex}(\mu_{\alpha})+\mu_{\alpha*}^{-1}D)$ is simple normal crossing, and each stratum is log smooth over $B'_{\alpha}$. In particular, for each $s\in B'_{\alpha}$, $(Y_s, E_s)\to (X_s,D_s)$ is a log resolution.

Moreover, we can replace $B'_{\alpha}$ by a finite \'etale cover $B_{\alpha}$ such that for each irreducible stratum $Z$ of $(Y_\alpha,E_{\alpha})$, the fibers of $Z\to B_{\alpha}$ are also irreducible. So $(Y_\alpha,E_{\alpha})/B_{\alpha}$ is  {\it a fiberwise log resolution} of $(X,D)\times_B B_{\alpha}$.  
\end{defn-lem}

\begin{defn}[$\mathbb{Q}$-Gorenstein family of klt pairs]\label{d-familypair}
We call $(X,\Delta)\to B$ to be {\it a $\mathbb{Q}$-Gorenstein family of klt pairs} over a smooth base $B$, if 
\begin{enumerate}
\item $X$ is flat over $B$ and $K_{X/B}+\Delta$ is $\bQ$-Cartier;
\item for any $s\in B$,  $X_s$ is normal and ${\rm Supp}(\Delta)$ does not contain $X_s$; and
\item for any $s\in B$, the pair $(X_s,\Delta_s)$ is klt, where $\Delta_s$ is the cycle theoretic restriction over $s\in B$. 
\end{enumerate}

We call $B\subset (X,\Delta)\to B$ {\it a $\mathbb{Q}$-Gorenstein family of klt singularities} over a smooth base $B$, if $(X,\Delta)\to B$ is a family of klt pairs over $B$, and there is a section $B\subset X$. We call $(X,\Delta)\to B$ to be {\it a $\mathbb{Q}$-Gorenstein family of log Fano pairs} over a smooth base $B$, if $(X,\Delta)\to B$ is a projective $\bQ$-Gorenstein family of klt pairs over $B$, and the fiber $(X_s,\Delta_s)$ is log Fano for any $s\in B$. 
\end{defn}
\begin{rem}Over a general (possibly singular) base, a correct definition of a family of klt pairs is subtle. See \cite{Kollar-modulibook} for a systematic study on this topic. For simplicity, in this note, we mostly only work over the smooth base.  We could allow a more general base $B$. In fact, what is needed is that for any `{\it admissible}' morphism $B'\to B$, we have a compatible definition of the pull back of the family. Such a theory is worked out whenever $B$ and $B'$ are reduced in \cite[Chapter 4]{Kollar-modulibook}. Using it, our results in this note can be extended to the case that when $B$ is reduced. 

When $\Delta=0$, we can even work over non-reduced base as below. 
\end{rem}

\begin{defn}[Locally stable family of klt varieties]\label{d-kollar}
We call $X\to B$ to be {\it a locally stable family of klt varieties} over a finite type base scheme $B$, if 
\begin{enumerate}
\item $X$ is flat over $B$ and for any $m$, $\omega^{[m]}_{X/B}$ is flat over $B$ and commutes with any base change $B'\to B$, 
\item for any point $s\in B$,  $X_s$ is klt.
\end{enumerate}
See \cite[Chapter 3]{Kollar-modulibook} for more background. 
\end{defn}

\begin{lem}\label{l-lctfamily} 
Let $(X,\Delta)\to B$ be a $\bQ$-Gorenstein family of klt pairs over a smooth base $B$, and $D\subset X\times_B U\to U$ a family of effective Cartier divisors on $X$ over a finite type variety $\pi\colon U\to B$. Fix a constant $c>0$. 

There is a constructible set $V\subset U$, such that if we denote by $D_u$ the divisor corresponding to a point $u\to U$  and $(X_u,\Delta_u)=(X,\Delta)\times_B \{u\}$, then $\lct(X_u,\Delta_u; D_u)=c$  if and only if $u$ factors through $ V$.  
\end{lem}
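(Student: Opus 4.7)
The plan is to construct $V$ by finding a finite constructible stratification of $U$ on each piece of which the log canonical threshold is independent of $u$. After replacing the family with its base change, we may assume $U=B$, so that $(X,\Delta)\to U$ is a $\bQ$-Gorenstein family of klt pairs and $D\subset X$ is an effective Cartier divisor containing no fiber. Apply Definition-Lemma \ref{l-logresol} to $X$ together with the codimension-one subvariety $\Supp(\Delta)\cup\Supp(D)$: this yields a finite partition $U=\bigsqcup_\alpha U_\alpha'$ into constructible subsets and, after passing to finite \'etale covers $U_\alpha\to U_\alpha'$, fiberwise log resolutions $\mu_\alpha:Y_\alpha\to (X,\Delta)\times_U U_\alpha$ whose exceptional-plus-transform divisor $E_\alpha=\sum_i E_{\alpha,i}$ has geometrically irreducible strata over $U_\alpha$.

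On each $Y_\alpha$ we have two relative $\bQ$-divisor identities
$$K_{Y_\alpha/U_\alpha}+(\mu_\alpha)_*^{-1}\Delta-\mu_\alpha^*(K_{X/U}+\Delta)=\sum_i\bigl(A_{X,\Delta}(E_{\alpha,i})-1\bigr)E_{\alpha,i},$$
$$\mu_\alpha^*D=\sum_i a_{\alpha,i}\,E_{\alpha,i},$$
which make sense because $K_{X/U}+\Delta$ is $\bQ$-Cartier and commutes with base change by the $\bQ$-Gorenstein hypothesis. Since each $E_{\alpha,i}$ restricts to a prime divisor on every geometric fiber $Y_{\alpha,u}$ (here we use the geometric irreducibility of strata), restricting these two identities to $u$ produces the analogous identities on $(Y_{\alpha,u},E_{\alpha,u})\to(X_u,\Delta_u)$ with the \emph{same} coefficients. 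Consequently both $A_{X_u,\Delta_u}(E_{\alpha,i,u})$ and $\ord_{E_{\alpha,i,u}}(D_u)$ are independent of $u\in U_\alpha$.

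Applying Lemma \ref{l-resolution} fiber by fiber, we obtain
$$\lct(X_u,\Delta_u;D_u)=\min_{i:\,a_{\alpha,i}>0}\frac{A_{X,\Delta}(E_{\alpha,i})}{a_{\alpha,i}}=:c_\alpha,$$
a constant on $U_\alpha$ (the minimum is over a nonempty set, since $D$ is Cartier and contains no fiber). As the log canonical threshold of $(X_u,\Delta_u;D_u)$ is intrinsic to the fiber, $c_\alpha$ depends only on $U_\alpha'$, not on the chosen \'etale cover, and is therefore constant on $U_\alpha'$ itself. Setting $V:=\bigsqcup_{\alpha:\,c_\alpha=c}U_\alpha'\subset U$ gives a finite union of constructible sets with the required property.

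The main obstacle is the middle step: one must be confident that in a \emph{fiberwise} log resolution the log discrepancies and the pullback multiplicities of $D$ that enter the LCT formula are genuinely constant along the stratum. This rests on two ingredients already built into the setup, namely the flatness of $\omega_{X/U}^{[m]}$ coming from the $\bQ$-Gorenstein assumption and the geometric irreducibility of strata guaranteed by Definition-Lemma \ref{l-logresol}. Once these are granted, Lemma \ref{l-resolution} turns the LCT into a finite minimum of constants on each stratum, and the constructibility of $V$ is automatic.
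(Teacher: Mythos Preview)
Your proof is correct and follows essentially the same approach as the paper, which simply defers to \cite[Section 9.5.D]{Laz-positivityII} or to the construction in \ref{p-familyresol}: stratify $U$ so that a fiberwise log resolution exists, note that the log discrepancies and pullback multiplicities of $D$ are constant on each stratum, and read off the log canonical threshold as the minimum of finitely many constant ratios. The paper does not spell out these details, so your write-up is in fact a faithful expansion of the cited argument.
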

\begin{proof}See \cite[Section 9.5.D]{Laz-positivityII} or \ref{p-familyresol}.
\end{proof}

\begin{say}\label{p-familyresol}
If we apply Definition-Lemma \ref{l-logresol} to the setting of Lemma \ref{l-lctfamily}, we can  stratify $V$ into a union of finitely many constructible subsets and take finite \'etale coverings to get finitely many varieties $\{V_{\alpha}\}$, such that 
$\bigsqcup_{\alpha}{V_{\alpha}}\to V$ is surjective,  and for any $\alpha$
$$(X\times_B V_{\alpha},\Delta\times_B V_{\alpha}+c\cdot D\times_U V_{\alpha})$$
admits a fiberwise log resolution $(Y_\alpha,E_{\alpha})/V_{\alpha}$. Let $(E_{\alpha})_j$ ($j=1,...,r$) on $Y_{\alpha}$ be the log canonical places over  $(X\times_B V_{\alpha},\Delta\times_B V_{\alpha}+c\cdot D\times_U V_{\alpha})$. Their reductions $(E_{\alpha})_{u,j}$ over any point $u\in V_{\alpha}$ give precisely all divisors on $Y_u$ which are log canonical places over $(X_s, \Delta_s+cD_u)$ where $s=\pi(u)$. So for any $\alpha$, we can identify the dual complexes 
\begin{eqnarray}\label{e-dualcomplexes}
\cD(\Gamma_{\alpha}:=\sum^r_{j=1}(E_{\alpha})_j)\mbox{\ \  and\ \  }\cD(\Gamma_{\alpha,u}:=\sum^r_{j=1}(E_{\alpha})_{u,j}) 
\end{eqnarray}
for any $u\in V_{\alpha}$.
\end{say}

\subsection{Boundedness of complements}The concept of {\it complement} was an idea first introduced in \cite{Shokurov-threedim} to understand morphisms with a relative anti-ample canonical bundle. At the first sight, it seems to be technical. However,  the boundedness of complement proved in \cite{Bir-BABI} is a major step forward to study birational geometry of Fano varieties. For this note, we need the following local result.

\begin{thm}[{\cite[Theorem 1.8]{Bir-BABI}}]\label{t-localcomplement}
Fix a positive integer $n$ and a finite rational set $I\subset [0,1]\cap \mathbb{Q}$. Then there exists a positive integer $N_0=N_0(n,I)$ depending only on $n$ and $I$, such that for any klt singularity $x\in (X,\Delta)$ with $\dim(X)=n$ and the coefficients of $\Delta$ contained in $I$, if there is a Koll\'ar component $S$ given by the exceptional divisor of the plt blow up $\mu \colon Y\to (X,\Delta)$, then there is a divisor $\Delta^+\ge \Delta$ which satisfies that $(X,\Delta^+)$ is log canonical, $N_0(K_X+\Delta^+)\sim 0$ and $S$ is a log canonical place of $(X,\Delta^+)$. 
\end{thm}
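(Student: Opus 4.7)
The plan is to reduce the local complement problem to the global boundedness of complements for log Fano pairs (the main result of \cite{Bir-BABI}) by descending along the plt blow up $\mu\colon Y\to X$ that extracts $S$. By Definition \ref{d-kollarcomp}, $(Y,\mu^{-1}_*\Delta+S)$ is plt and $-(K_Y+\mu^{-1}_*\Delta+S)$ is $\mu$-ample; adjunction then gives $(K_Y+\mu^{-1}_*\Delta+S)|_S=K_S+\Delta_S$ with $(S,\Delta_S)$ a log Fano pair of dimension $n-1$. The coefficients of the different $\Delta_S$ are of the form $\tfrac{m-1+\sum n_j a_j}{m}$ for $a_j\in I$ and nonnegative integers $n_j,m$, hence lie in a DCC set $J=J(n,I)$ depending only on $n$ and $I$.

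First, I would invoke the global boundedness of complements for log Fano pairs of dimension $n-1$ with coefficients in $J$ to obtain an integer $N_1=N_1(n-1,J)$ and an effective $\bQ$-divisor $\Gamma_S\ge \Delta_S$ on $S$ such that $(S,\Gamma_S)$ is log canonical and $N_1(K_S+\Gamma_S)\sim 0$. This is the deepest input, and I expect it to be by far the main obstacle: it is essentially the main theorem of \cite{Bir-BABI} and draws on the BAB boundedness package together with heavy MMP techniques (effective birationality, approximating lc thresholds from below, and the construction of good $n$-complements on bounded families of Fanos).

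Next, I would lift $\Gamma_S$ to a complement on $Y$. For $N_1$ chosen sufficiently divisible, set $L:=-N_1(K_Y+\mu^{-1}_*\Delta+S)$, which is $\mu$-ample and Cartier. Since $N_1(\Gamma_S-\Delta_S)$ is effective and linearly equivalent to $L|_S=-N_1(K_S+\Delta_S)$, it defines a section of $\cO_S(L|_S)$. From the short exact sequence
$$0\to \cO_Y(L-S)\to \cO_Y(L)\to \cO_S(L|_S)\to 0,$$
this section extends to a section of $\cO_Y(L)$ provided $R^1\mu_*\cO_Y(L-S)=0$. The required vanishing follows from relative Kawamata-Viehweg applied to the plt pair $(Y,\mu^{-1}_*\Delta+S)$, using that both $-(K_Y+\mu^{-1}_*\Delta+S)$ and $-S$ are $\mu$-ample (the latter because $\mu$ is a plt blow up with irreducible exceptional divisor $S$). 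Let $D\in |L|$ be the lifted divisor; it satisfies $D|_S=N_1(\Gamma_S-\Delta_S)$, so setting $\Gamma_Y:=\mu^{-1}_*\Delta+S+\tfrac{1}{N_1}D$ yields $N_1(K_Y+\Gamma_Y)\sim 0$ with $\Gamma_Y|_S=\Gamma_S$.

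Finally, inversion of adjunction along $S$, together with the log canonicity of $(S,\Gamma_S)$, shows that $(Y,\Gamma_Y)$ is log canonical in a neighborhood of $S$ and that $S$ is a log canonical place. Since $\mu$ is an isomorphism over $X\setminus\{x\}$ and $\mu^{-1}(x)=S$, pushing forward yields $\Delta^+:=\mu_*\Gamma_Y\ge \Delta$ with $(X,\Delta^+)$ log canonical, $N_1(K_X+\Delta^+)\sim 0$, and $S$ a log canonical place of $(X,\Delta^+)$. Taking $N_0:=N_1$ completes the proof; $N_0$ depends only on $n$ and $I$ because both $J=J(n,I)$ and $N_1=N_1(n-1,J)$ do. Aside from the global complement step, the lifting via vanishing is the only other delicate point, and it is the standard trick used throughout the complement literature.
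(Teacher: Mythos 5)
Your proof is correct, but it takes a genuinely different and longer route than the paper. The paper applies \cite[Theorem 1.8]{Bir-BABI} directly in its \emph{relative} form: that theorem is stated for a contraction $\mu\colon Y\to X$ with $Y$ of Fano type over $X$, so one can simply feed it the plt blow up $\mu\colon (Y,\mu^{-1}_*\Delta+S)\to X$ to produce a boundary $\Theta\ge 0$ with $(Y,\Theta+\mu^{-1}_*\Delta+S)$ lc and $N_0(K_Y+\Theta+\mu^{-1}_*\Delta+S)\sim_X 0$; pushing this forward to $X$ finishes the proof in two lines. You instead re-derive exactly this case of the relative theorem from the \emph{global} boundedness of complements in dimension $n-1$, applied to the log Fano pair $(S,\Delta_S)$, combined with a lifting step (surjectivity of restriction via relative Kawamata--Viehweg vanishing, using that both $-(K_Y+\mu^{-1}_*\Delta+S)$ and $-S$ are $\mu$-ample) and inversion of adjunction to descend to $X$. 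This is, in essence, the internal mechanism of Birkar's own proof of the relative theorem specialized to a plt blow up with irreducible exceptional divisor. Your version is more self-contained and correctly tracks that the coefficients of the different $\Delta_S$ lie in the hyperstandard set $\Phi(I)$ (so the complement bound depends only on $n$ and $I$), but it trades a one-step citation for a multi-step reconstruction, including the Kawamata--Viehweg lifting and inversion of adjunction, all of which the direct citation avoids.
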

\begin{proof}Denote by $\Delta_Y:=\mu^{-1}_*\Delta$. 
By \cite[Theorem 1.8]{Bir-BABI}, there is a constant $N_0=N_0(n,I)$ which only depends on $n$ and $I$, and  a $\mathbb{Q}$-divisor $\Theta \ge 0$ such that $(Y,\Theta+\Delta_Y+S)$ is log canonical, and 
$$N_0(K_{Y}+\Theta+\Delta_Y+S)\sim_{X} 0.$$

Push forward to $X$ and denote by $\Psi:=\mu_{*}(\Theta)$ and $\Delta^+:= \Delta+\Psi$. Since 
$$\mu^*(K_X+\Delta^+)=K_Y+\Delta_Y+S+\Theta$$
 we know that $(X, \Delta^+)$ is log canonical with $S$ being a log canonical place. Moreover, $N_0(K_X+\Delta^+)$ is Cartier. 

\end{proof}

\subsection{Local volumes}

\subsubsection{Definitions}
For a valuation $v$ centered on a klt singularity $x\in (X,\Delta)$, we give the definitions of two volume functions defined on $\Val_{X,x}$, namely the volume $\vol_{X,x}(v)$ (or $\vol(v)$)  and the normalized volume $\hvol_{(X,\Delta),x}(v)$ (or abbreviated as $\hvol_{X,\Delta}(v)$ or simply $\hvol(v)$ if there is no confusion).

\begin{defn}\label{d-vol}
 Let $X$ be an $n$-dimensional normal variety. Let $x\in X$ be a closed point. We define
 the \emph{volume of a valuation} $v\in\Val_{X,x}$ following \cite{ELS-localvolume} as
 \[
  \vol_{X,x}(v)=\limsup_{k\to\infty}\frac{\ell(\cO_{x,X}/\fa^v_k)}{k^n/n!}.
 \]
 where $\ell$ denotes the length of the artinian module.
\end{defn}
Thanks to the works of \cite{ELS-localvolume, LM-okounkovbody, Cutkosky-multiplicities} the above limsup is actually a limit.

\medskip

The following invariant, which was defined first in \cite{Li-minimizer}, plays a key role for our study in the local stability. 

\begin{defn}[\cite{Li-minimizer}]\label{d-normvol}
 Let $(X,\Delta)$ be an $n$-dimensional klt log pair. Let $x\in X$ be a closed point.
 Then the \emph{normalized volume function of valuations} $\hvol_{(X,\Delta),x}:\Val_{X,x}\to(0,+\infty)$
 is defined as
 \[
  \hvol_{(X,\Delta),x}(v)=\begin{cases}
            A_{X,\Delta}(v)^n\cdot\vol_{X,x}(v), & \textrm{ if }A_{X,\Delta}(v)<+\infty;\\
            +\infty, & \textrm{ if }A_{X,\Delta}(v)=+\infty.
           \end{cases}
 \]
 The \emph{volume of the klt singularity} $(x\in (X,\Delta))$ is defined as
 \[
  \hvol(x, X,\Delta):=\inf_{v\in\Val_{X,x}}\hvol_{(X,\Delta),x}(v).
 \]
 
 For a divisorial valuation $\ord_E$, we will also use $\hvol(E)$ for $\hvol(\ord_E)$.  
 
 It is known the minimum indeed exists by \cite{Blu-Existence} (see also Remark \ref{r-existence}).
 
\end{defn}

The minimizing problem for $\hvol_{X,\Delta}$ is closely related to K-stability. The guiding question is called the {\it Stable Degeneration Conjecture}, which was formulated in \cite[Conjecture 7.1]{Li-minimizer} and \cite[Conjecture 1.2]{LX-SDCII}. See \cite{LLX-Tiansurvey} for more background. Theorem \ref{t-minimizing} settles one part of the conjecture. 

We need the following result which is a special case of the Stable Degeneration Conjecture.

\begin{prop}\label{p-cone}
Let $(V,\Delta_V)$ be a log Fano pair. Let $r$ be a positive integer such that $H:=-r(K_V+\Delta_V)$ is Cartier. Consider the cone $x\in (X,\Delta)=C(V,\Delta_V;H)$, with $x$ being the vertex. Let $v^*\in \Val_{X,x}$ be the canonical divisorial valuation obtained by blowing up the vertex. Then $v^*$ is a minimizer of $\hvol_{(X,\Delta),x}$ if and only if $(V,\Delta_V)$ is K-semistable. 
\end{prop}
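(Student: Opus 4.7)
The plan is to reduce the statement to the valuative criterion for K-semistability by relating valuations in $\Val_{X,x}$ to divisorial data on $V$ via the cone structure, and handling arbitrary valuations via a $\mathbb{G}_m$-degeneration.

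First I would compute $\hvol_{X,\Delta}(v^*)$ explicitly. Set $L := -K_V - \Delta_V$, so $H = rL$. Blowing up the vertex yields $\pi\colon Y \to X$ with exceptional divisor $V_\infty \cong V$ satisfying $\mathcal{O}_{V_\infty}(-V_\infty) \cong \mathcal{O}_V(L)$; adjunction gives $A_{X,\Delta}(v^*) = 1$, while $\fa^{v^*}_k = \bigoplus_{m \geq k} H^0(V, mH)$ yields $\vol(v^*) = (H^{n-1})$. Hence $\hvol(v^*) = (H^{n-1}) = r^{n-1}(L^{n-1})$.

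Next I would construct a natural family of test valuations from divisorial data on $V$. For any divisor $F$ over $V$ and any $\beta > 0$, define the $\mathbb{G}_m$-invariant quasi-monomial valuation $v_{F,\beta} \in \Val_{X,x}$ by
\[
  v_{F,\beta}\Bigl(\sum_m s_m\Bigr) := \min_{m :\, s_m \neq 0}\bigl(\beta m + \ord_F(s_m)\bigr), \qquad s_m \in H^0(V, mH).
\]
A direct check gives $A_{X,\Delta}(v_{F,\beta}) = \beta + A_{V,\Delta_V}(F)$, while $\fa^{v_{F,\beta}}_t = \bigoplus_m H^0(V, mH)_{\ord_F \geq t - \beta m}$, and an Okounkov-body style calculation expresses $\vol(v_{F,\beta})$ as an explicit integral in $\beta$ and in the filtration dimensions $\dim H^0(V, mH)_{\ord_F \geq s}$. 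Optimizing $\hvol(v_{F,\beta})$ over $\beta > 0$ for fixed $F$ reduces the comparison $\hvol(v_{F,\beta}) \geq \hvol(v^*)$ to the single inequality
\[
   A_{V,\Delta_V}(F) \geq S_L(F), \qquad S_L(F) := \lim_{m \to \infty}\frac{1}{m \cdot h^0(mH)}\sum_{j \geq 1} \dim H^0(V, mH)_{\ord_F \geq j}.
\]
By the Fujita--Li valuative criterion, $(V,\Delta_V)$ is K-semistable if and only if this inequality holds for every divisor $F$ over $V$. This gives the backward direction at once: if K-semistability fails on some $F$, then $v_{F,\beta}$ with the optimal $\beta$ has $\hvol(v_{F,\beta}) < \hvol(v^*)$.

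The forward direction demands $\hvol(v^*) \leq \hvol(v)$ for \emph{every} $v \in \Val_{X,x}$, not only cone-type valuations, and this is the main obstacle. I would resolve it by a $\mathbb{G}_m$-degeneration: using the natural $\mathbb{G}_m$-action on the cone, one degenerates an arbitrary $v$ to its initial $\mathbb{G}_m$-invariant limit $v_0$, which is necessarily of the form $v_{F,\beta}$ for some $F$ and $\beta \geq 0$; the log discrepancy is preserved under the degeneration while the colength function grows in the limit, so $\hvol(v_0) \leq \hvol(v)$. Combined with the previous paragraph, this closes the argument. This torus-degeneration step is the cone analogue of the techniques developed in \cite{LX-SDCI, Li-K=M, LL-K=M}, and together with the explicit optimization over $\beta$ for each fixed $F$ it constitutes the genuine technical content of the proof.
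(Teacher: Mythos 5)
The paper's proof is purely a citation to \cite[Theorem 4.5]{LX-SDCI}, \cite{Li-K=M}, and \cite{LL-K=M}, and your proposal reconstructs roughly the strategy those references use, but it contains both a concrete computational error and a substantive gap. The computational error: the normal bundle of the exceptional divisor over the vertex is $\mathcal{O}_V(-H) = \mathcal{O}_V(-rL)$, not $\mathcal{O}_V(-L)$, so adjunction gives $A_{X,\Delta}(v^*) = \frac{1}{r}$ rather than $1$, and hence $\hvol_{(X,\Delta),x}(v^*) = r^{-n}(H^{n-1}) = \frac{1}{r}(L^{n-1})$. (This is exactly what the paper itself uses in the proof of Theorem \ref{t-openness}, where it writes $\hvol_{(Y_s,\Delta_{Y_s}),s}(\ord_{V_s}) = \frac{1}{r}(-K_{X_s}-\Delta_s)^n$.) The same missing factor appears in your formula for $A_{X,\Delta}(v_{F,\beta})$, which in your parametrization should read $\frac{\beta}{r} + A_{V,\Delta_V}(F)$. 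These errors must be tracked through the optimization over $\beta$ before you can claim the comparison reduces to the Fujita--Li inequality.

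The more serious issue is the forward direction (K-semistable implies $v^*$ is a minimizer), which is dispatched in two sentences and is in fact the hard part. A $\mathbb{G}_m$-invariant valuation on the cone corresponds to a pair $(w,\beta)$ where $w \in \Val_V$ need not be divisorial (it can be quasi-monomial or worse), so the limit $v_0$ of a general $v$ does not land in your class $\{v_{F,\beta} : F\ \text{divisorial over}\ V\}$; you would need either the extension of the Fujita--Li criterion to arbitrary valuations, or an additional approximation argument. Moreover, the assertion that ``the log discrepancy is preserved under the degeneration while the colength function grows, so $\hvol(v_0) \leq \hvol(v)$'' is precisely the technical core of \cite{Li-K=M} and \cite{LL-K=M}: it requires a careful analysis of how the valuation ideals, the associated graded ring, and the log discrepancy behave under passage to the $\mathbb{G}_m$-initial degeneration, and none of this is supplied. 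Your own closing sentence correctly identifies this step as ``the genuine technical content''; acknowledging the difficulty without addressing it leaves the forward implication essentially unproved.
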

\begin{proof}This was proved in \cite[Theorem 4.5]{LX-SDCI}, after the works in \cite{Li-K=M} and \cite{LL-K=M}.
\end{proof}

\subsubsection{Invariance of local volumes}
The following theorem is a local version of \cite[Theorem 4.2]{HMX-auto} and the proof is similar to the one there.
\begin{thm}\label{t-invlocal}
Let $B\subset(X,\Delta)\to B$ be a $\mathbb{Q}$-Gorenstein family of klt pairs over a smooth base $B$. 
Assume there is a fiberwise log resolutions $\mu\colon Y\to (X,\Delta)$ over $B$  (see Definition-Lemma \ref{l-logresol}) with the exceptional divisor $E=\sum^k_{i=1} E_i$. If $F$ is a prime toroidal divisor with respect to $(Y,{\rm Supp}(\mu_*^{-1}(\Delta))+E )$, with $A_{X,\Delta}(F)<1$, then the volume $\vol_{X_s,\Delta_s}(\ord_{F_s})$ is locally constant on $s\in B$. 
\end{thm}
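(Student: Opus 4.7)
The plan is to port the invariance-of-plurigenera strategy of \cite{HMX-auto} to the local setting, tracking valuation ideals in place of pluricanonical sections. The goal is to realize each $\fa^{F_s}_k\subset\cO_{X_s,x_s}$ as the fiber over $s$ of a coherent sheaf on $X$ that is $B$-flat with formation commuting with base change; then the colengths $\ell(\cO_{X_s,x_s}/\fa^{F_s}_k)$ are locally constant in $s$ for each fixed $k$, and dividing by $k^n/n!$ and passing to the limit (which exists by \cite{Cutkosky-multiplicities}) yields local constancy of $\vol_{X_s,\Delta_s}(\ord_{F_s})$.

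First I would extract $F$ in families. Being toroidal with respect to $(Y,\mu_*^{-1}\Delta+E)$, the divisor $F$ arises from a sequence of toroidal blowups of strata of $(Y,E)$. Because the strata of $(Y,E)/B$ are geometrically irreducible (Definition-Lemma \ref{l-logresol}), these blowups can be performed uniformly over $B$ and yield a model $\mu':Y'\to X$ over $B$ carrying $F$ as a Cartier divisor, with each $\mu'_s:Y'_s\to X_s$ a log resolution of $(X_s,\Delta_s)$ and $A_{X_s,\Delta_s}(F_s)=A_{X,\Delta}(F)<1$. Setting $\cG_k:=\mu'_*\cO_{Y'}(-kF)$, the required identity $\cG_k|_{X_s}\cong(\mu'_s)_*\cO_{Y'_s}(-kF_s)=\fa^{F_s}_k$ follows from Grauert's theorem once one knows the vanishing $R^i\mu'_*\cO_{Y'}(-kF)=0$ for $i>0$ both globally and on each fiber.

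The crux is this vanishing, for which the hypothesis $A_{X,\Delta}(F)<1$ is essential. Writing $(\mu')^*(K_X+\Delta)=K_{Y'}+(\mu')_*^{-1}\Delta+\sum_j(1-A_{X,\Delta}(E_j))E_j$, all boundary coefficients on the right lie in $[0,1)$ by klt-ness of $(X,\Delta)$, and in particular the coefficient $1-A_{X,\Delta}(F)$ of $F$ is strictly less than $1$. Decomposing $-kF\sim_{\mathbb{Q}} K_{Y'}+B-(\mu')^*(K_X+\Delta)+(1-A_{X,\Delta}(F)-k)F$ with $(Y',B)$ sub-klt, and performing an auxiliary $F$-relative MMP over $X$ (or invoking a Hacon--McKernan-type extension argument) to render $-F$ relatively semiample, relative Kawamata--Viehweg then gives the vanishing; the same construction applied fiberwise yields fiberwise vanishing. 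Grauert's criterion then gives that $\cG_k$ is $B$-flat with compatible base change, completing the proof.

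The main obstacle is this vanishing step: $-kF$ is not in general $\mu'$-nef, so Kawamata--Viehweg does not apply directly and one must first pass to a suitable auxiliary model. Moreover, these modifications must be carried out uniformly over $B$ so as to preserve the fiberwise log-resolution structure, which is precisely what the geometric irreducibility of strata in Definition-Lemma \ref{l-logresol} secures. The condition $A_{X,\Delta}(F)<1$ is indispensable here because it keeps the relevant sub-boundary sub-klt rather than sub-lc, enabling genuine vanishing instead of merely Koll\'ar-type torsion-freeness; without it the colengths could jump and the argument would break down.
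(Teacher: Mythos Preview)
Your overall strategy matches the paper's: establish base-change compatibility of $\mu'_*\cO_{Y'}(-kF)$ by an HMX-type argument. But the vanishing step has a real gap. In your decomposition $-kF \sim_{\bQ} K_{Y'}+B-(\mu')^*(K_X+\Delta)+(1-A_{X,\Delta}(F)-k)F$, the $F$-coefficient $1-A_{X,\Delta}(F)-k$ is very negative once $k\ge 1$, so no boundary containing this term is sub-klt and Kawamata--Viehweg does not apply. Your fix, ``run an auxiliary MMP to make $-F$ semiample, then apply vanishing,'' is where the whole content lies and you have not supplied it: the difficulty is not running the MMP on the total space (BCHM handles that) but proving that each step restricts to a birational contraction on every fiber and does not contract components of the relevant boundary there. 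Without this, neither fiberwise vanishing nor the base-change identity follows from the total-space construction.

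The paper does not attempt $R^i$-vanishing at all. It fixes a fiber $s$, writes $\mu^*(K_X+\Delta)=K_Y+F_1-F_2$, chooses $\epsilon<\mult_F F_1$, sets $\Gamma$ on $Y$ to agree on $Y_s$ with $(F_1)_s-\epsilon F_s$ minus its Nakayama--Zariski part $N_\sigma(Y_s/X_s;K_{Y_s}+(F_1)_s-\epsilon F_s)$, and runs a $(K_Y+\Gamma)$-MMP over $X$. It then proves by induction on MMP steps that (a) no step touches the generic point of any component of $\Gamma_s$ and (b) each step restricts to a birational contraction on $Y_s$. These yield directly $\mu_*\cO_Y(-mF)\cdot\cO_{X_s}=(\mu_s)_*\cO_{Y_s}(-mF_s)$ for sufficiently divisible $m$, which is all one needs for the volume. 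The role of $A_{X,\Delta}(F)<1$ is that $F\subset\Supp F_1$, so one can perturb by $-\epsilon F$ while keeping the boundary effective and the pair terminal---not to keep a sub-boundary sub-klt for a vanishing theorem. Your ``auxiliary MMP,'' once made precise, would have to reproduce essentially this inductive fiberwise control.
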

\begin{proof}By restricting over a curve $C\to B$, we can assume $B$ is a smooth curve. By taking a toroidal resolution, we can assume $F$ is a divisor on $Y$. Write 
$$\mu^*(K_{X}+\Delta)=K_{Y}+F_1-F_2,$$ where $F_1$ and $F_2$ are effective $\bQ$-divisors without any common components. By our assumption, each stratum of $(Y, {\rm Supp}(F_1+F_2))$ is smooth over $B$ and $F\subset {\rm Supp}(F_1)$. After possibly a further toroidal blow up, we may assume $(Y, F_1)$ is terminal. 

Fix $s$, for a sufficiently small $\epsilon\in \bQ_{>0}$ satisfying $\epsilon<\mult_FF_1$, the divisor $N_{\sigma}(Y_s/X_s; K_{Y_s} +(F_1)_s-\epsilon F_s)$ defined as \cite[III.4]{Naka-zariski} is a $\mathbb{Q}$-divisor, as $({Y_s} ,(F_1)_s-\epsilon F_s)$ has a relative good minimal model over $X_s$. In particular,
$$\Gamma_s :=\Big((F_1)_s-\epsilon F_s\Big) -\Big(((F_1)_s-\epsilon F_s)  \wedge N_{\sigma}\big(Y_s/X_s;K_{Y_s}+(F_1)_s-\epsilon F_s\big)\Big)$$
is also a $\mathbb{Q}$-divisor. Therefore, we can choose a divisor $\Gamma$ supported on ${\rm Supp}(F_1)$ such that $\Gamma|_{Y_s}=\Gamma_s$.

 We run a relative MMP program with scaling by \cite{BCHM} for
$K_{Y}+\Gamma$ over $X$.
Denote by $g^k\colon Y^k\dasharrow Y^{k+1}$ the $k$-th MMP step, and $\Gamma^k$ the pushforward of $\Gamma$ to $Y^k$. 
We will inductively prove  that,
\begin{enumerate}
\item[(a)] $g^k$ is isomorphic at the generic point of every component of $\Gamma|_{s}$; and
\item[(b)] $g_s^k\colon Y_s^k\dasharrow Y_s^{k+1}$ is a birational contraction.
\end{enumerate}

Assume this is true after $(k-1)$-steps. $(b)_{k-1}$ implies that no component of $\Gamma^k_s$
is a component of the stable base locus of $K_{Y^k_s}+\Gamma^k_s$. Then if $g^k$ is not an isomorphism at the generic point of a divisor $D$ contained in $Y^k_s$, $D$ is covered by curves $C$ such that
$$0>C\cdot(K_{Y^k}+\Gamma^k)=C\cdot(K_{Y^k_s}+\Gamma_s^k),$$
It follows that $D$ is a component of the stable base locus of $K_{Y^k_s}+\Gamma^k_s$, thus $D$ is not a component of $\Gamma^k_s$. This is $(a)_k$.

To see $(b)_k$,  since the MMP is also a $(K_{Y_s^k}+\Gamma_s^k )$-negative morphism, if $g_s^k\colon Y_s^k\dasharrow Y_s^{k+1}$ is not a birational contraction, a component $\Theta$ in ${\rm Ex}((g_s^k)^{-1})$ will have non-positive discrepancy for $(Y_s^{k+1},\Gamma^{k+1}_{s} )$, thus it has a negative discrepancy with respect to $(Y_s^{k},\Gamma_s^k )$. But as $(Y^k_s, \Gamma_s^k)$ is terminal, and each step is $(K_{Y^k_s}+\Gamma_s^k)$-negative, we know $\Theta$ is component of ${\rm Supp}(\Gamma^k_s)$ which is a contradiction to $(a)_k$.

By \cite{BCHM}, we obtain a relative minimal model $\phi\colon Y\dasharrow Z$  and $ \psi\colon Z \to X $. For $m$ sufficiently divisible,
\begin{eqnarray*}
(\mu_s)_*\cO_{Y_s}(-m\epsilon{F_s})&=& (\mu_s)_*\cO_{Y_s}\big(-m(\epsilon F_s-(F_2)_s)\big)\\
&\cong& (\mu_s)_*\cO_{Y_s}\big(m(K_{Y_s}+(F_1)_s-\epsilon F_s)\big) \\
&=&(\mu_s)_*\cO_{Y_s}\big(m(K_{Y_s}+\Gamma_s)\big)\\
&=& (\psi_s)_*\cO_{Z_s}\big(m(K_{Z_s}+(\phi_{s})_*\Gamma_s)\big)\\
&\cong&\psi_*\cO_{Z}\big(m(K_{Z}+\phi_*\Gamma)\big) \cdot  \mathcal{O}_{X_s}\\
&\subset & \mu_*\cO_Y\big(m(K_{Y}+(F_1)-\epsilon F)\big) \cdot \mathcal{O}_{X_s} \\
&\cong & \mu_*\cO_Y\big(m(F_2-\epsilon F)\big)\cdot \mathcal{O}_{X_s} \\
&=&\mu_*\cO_Y(-m\epsilon {F})\cdot\mathcal{O}_{X_s},
 \end{eqnarray*}
where in the fourth line, we use  $(b)$ and that $(Y_s,\Gamma_s)\dasharrow (Z_s, (\phi_s)_*\Gamma_s)$ is $(K_{Y_s}+\Gamma_s)$-negative; in the fifth line, we use $\phi\colon Z\to X $ is a relative minimal model of $(Z,\phi_*\Gamma)$ and $\phi_*(\Gamma)|_{Z_s}=(\phi_s)_*(\Gamma_s)$ by $(a)$ and $(b)$.

 So $\mu_*\cO_Y(-m{F}) \cdot  \mathcal{O}_{X_s}\cong (\mu_s)_*\cO_{Y_s}(-m{F_s})$ for any $s$ and sufficiently divisible $m$, which implies 
 $$\frac{\cO_X}{\mu_*\cO_Y(-m{F})} \cdot {\cO_{X_s}} \cong \frac{\cO_{X_s}}{(\mu_s)_*\cO_{Y_s}(-m{F_s})}.$$
 Thus we conclude that the finite rank $\cO_B$-module $\cO_X/\mu_*\cO_Y(-m{F})$ is locally free, whose restriction over $\Spec k(s)$ is isomorphic to ${\cO_{X_s}}/{(\mu_s)_*\cO_{Y_s}(-m{F_s})}$. As a result, we know that
 \[
 \vol(\ord_{F_\eta})=\lim_{m\to \infty}\frac{{\rm rk}\big(\cO_X/\mu_*\cO_Y(-m{F})\big)}{m^n/n!}=\lim_{m\to \infty}\frac{\dim \big(\cO_{X_s}/\mu_*\cO_{Y_s}(-m{F_s})\big)}{m^n/n!}=\vol(\ord_{F_{s}})
 \]
 is a constant.
\end{proof}
The following corollary may be of independent  interest. We will not need it in the rest of the paper.
\begin{cor}In the notation of Theorem \ref{t-invlocal}. Let $\chi\colon Z^{c}\to X$ be the birational model which precisely extracts the birational transform of $F$, denoted by $F'$, such that $-F'$ is ample over $X$. Then it satisfies that restricting over each $s\in B$, $\chi_s\colon Z^{c}_s\to X_s$ precisely extracts $F'_s$ which is the birational transform of $F_s$.
\end{cor}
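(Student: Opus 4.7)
The plan is to construct $Z^{c}$ via a relative MMP analogous to the one in the proof of Theorem \ref{t-invlocal}, and then to verify the fiberwise invariance by repeating essentially the same argument.

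First I would construct $Z^{c}$ by MMP. Since $A_{X,\Delta}(F)<1$ and $F$ is a prime toroidal divisor on the fiberwise log resolution $\mu\colon Y\to X$, let $D:=\mathrm{Ex}(\mu)-F$ be the sum of the remaining exceptional components. For a sufficiently small rational $\epsilon>0$ and a rational $t$ with $A_{X,\Delta}(F)<t<1$, the pair $(Y,\Gamma:=\mu^{-1}_{*}\Delta+tF+\epsilon D)$ is klt, and the relative $(K_Y+\Gamma)$-MMP over $X$ with scaling terminates by \cite{BCHM} at a birational map $\phi\colon Y\dasharrow Z^{c}$ together with $\chi\colon Z^{c}\to X$ such that every component of $D$ is contracted, $F':=\phi_{*}F$ is the unique $\chi$-exceptional divisor, and $-F'$ is $\chi$-ample.

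Next I would apply the fiberwise analysis from the proof of Theorem \ref{t-invlocal} to this MMP. Inductively one verifies that each step $g^{k}\colon Y^{k}\dasharrow Y^{k+1}$ satisfies: (a) $g^{k}$ is an isomorphism at the generic point of every component of $\Gamma|_{s}$, in particular at the generic point of $F|_{s}$; and (b) the restriction $g^{k}_{s}\colon Y^{k}_{s}\dasharrow Y^{k+1}_{s}$ is a birational contraction. The arguments are essentially verbatim: (a) holds because otherwise a component of $\Gamma^{k}_{s}$ would lie in the stable base locus of $K_{Y^{k}_{s}}+\Gamma^{k}_{s}$, contradicting $(b)_{k-1}$; and (b) follows from the terminality of the starting pair together with the $(K_{Y^{k}_{s}}+\Gamma^{k}_{s})$-negativity of each step.

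Finally, properties (a) and (b) together imply that the restriction $\chi_{s}\colon Z^{c}_{s}\to X_{s}$ is birational, its unique exceptional divisor is $F'_{s}=(\phi_{*}F)|_{s}$, which is the birational transform of $F_{s}$, and $-F'_{s}$ is $\chi_{s}$-ample (inherited from $-F'$ being $\chi$-ample by restriction to the fiber). This is precisely the desired characterization of the extraction of $F_{s}$ on $X_{s}$. The main obstacle is to choose the boundary $\Gamma$ so that the resulting MMP simultaneously extracts only $F'$ (contracting all of $D$) and preserves the fiberwise structure; once this is arranged, the rest of the proof is a near-verbatim repetition of the inductive argument in Theorem \ref{t-invlocal}.
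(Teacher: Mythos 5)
The paper's own proof is much shorter and follows a completely different route: it simply observes that the flatness statement already established in the proof of Theorem~\ref{t-invlocal} --- namely that $\mu_*\cO_Y(-mF)\cdot k_s\cong (\mu_s)_*\cO_{Y_s}(-mF_s)$ for sufficiently divisible $m$ and $\mu_*\cO_Y(-mF)$ is flat over $B$ --- combined with the fact that $Z^c$ (resp.\ $Z^c_s$) is the blow up of the ideal sheaf $\mu_*\cO_Y(-mF)$ (resp.\ $(\mu_s)_*\cO_{Y_s}(-mF_s)$), immediately gives the corollary. Your proposal instead attempts to re-run the fiberwise MMP argument from scratch for a new boundary, which is not only more work than needed but also has a genuine gap.

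The gap is in the claimed transfer of the inductive conditions $(a)$ and $(b)$. In the proof of Theorem~\ref{t-invlocal}, the boundary $\Gamma$ is constructed specifically by subtracting off the $N_\sigma$-part, so that no component of $\Gamma_s$ lies in the stable base locus of $K_{Y_s}+\Gamma_s$ over $X_s$; this is exactly what makes statement $(a)$ (\textquotedblleft $g^k$ is an isomorphism at the generic point of every component of $\Gamma|_s$\textquotedblright) provable. Your $\Gamma = \mu^{-1}_*\Delta + tF + \epsilon D$ includes $\epsilon D$, and the whole point of your MMP is to contract $D$; hence the components of $D$ (and of $D|_s$) \emph{do} lie in the relative stable base locus, so $(a)$ is false for them and its proof via the base-locus dichotomy collapses. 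One cannot fix this merely by restricting attention to $F$, because the argument for $(a)$ in the paper proves it for \emph{all} components of $\Gamma$ simultaneously, and this is also what is needed for the proof of $(b)$ (via terminality of $(Y^k_s,\Gamma^k_s)$ plus $(a)_k$). There is a secondary issue as well: for $\epsilon$ small your $(K_Y+\Gamma)$-MMP need not contract all of $D$, since $E_i$ with $A_{X,\Delta}(E_i)$ close to $0$ would still have negative coefficient in $K_Y+\Gamma-\mu^*(K_X+\Delta)$; the usual construction of the extraction uses coefficient $1$ on the other exceptional divisors, not $\epsilon$. All of this is sidestepped by the paper's argument, which you should adopt: invoke the conclusion of Theorem~\ref{t-invlocal}, identify $Z^c$ and $Z^c_s$ as blowups of the corresponding ideal sheaves, and use flatness over $B$.
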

\begin{proof}In the proof of Theorem \ref{t-invlocal}, we have shown under the assumption there, 
$$\mu_*\cO_Y(-mF)\cdot k_s\cong (\mu_s)_*\cO_{Y_s}(-mF_s).$$ 

For any sufficiently divisible $m$, $\chi\colon Z^c\to X$ (resp. $\chi_s\colon Z^c_s\to X_s$) is indeed the blow up of the ideal sheaf $\mu_*\cO_Y(-mF)\subset \cO_X$ (resp.  $(\mu_s)_*\cO_{Y_s}(-mF_s)\subset \cO_{X_s}$). Thus by the above isomorphism, $Z^c_s$ is the birational transform of $X_s$ under the morphism $\chi$. Since $\mu_*\cO_Y(-mF)$ is flat over $B$, we know $Z^c$ is flat over $B$, therefore $Z^c\times_B \{s\}$ coincides with $Z^c_s$. 
\end{proof}

\section{Quasi-monomial limit}
\subsection{Approximation}
On a klt singularity $x\in (X,\Delta)$, for a graded sequence $\fa_{\bullet}=\{\fa_k\}_{k\in \bN}$ of $\fm_x$-primary ideals, unlike Lemma \ref{l-kollarcompute}, usually we can not find a divisorial valuation computing its log canonical threshold. However, we have the following result, whose proof slightly simplifies the one in {\cite[Theorem 1.3]{LX-SDCI}}. 

\begin{prop}\label{p-approxlc}
Let $x\in (X,\Delta)$ be a klt singularity. 
Let $\fa_{\bullet}=\{\fa_k\}_{k\in\bN}$ be a graded sequence of $\fm_x$-primary ideals  with $\lct(X,\Delta,\fa_{\bullet})<+\infty$, then we can find a valuation $v\in \Val^{=1}_{X,x}$ which is the limit of $\frac1{A_{X,\Delta}(S_j)} \cdot \ord_{S_j}$ for a sequence of Koll\'ar components $\{S_j\}$, such that 
 $v$ calculates the log canonical threshold of $\fa_{\bullet}$. 
\end{prop}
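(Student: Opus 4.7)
The plan, following \cite[Theorem 1.3]{LX-SDCI}, is to replace $\fa_\bullet$ by the valuation ideals of some valuation already known to compute the log canonical threshold, apply Lemma \ref{l-kollarcompute} to each truncation to produce Kollár components, and extract a limit using compactness of bounded log-discrepancy loci.

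First, by \cite[Theorem 7.3]{JM-qmvaluations}, pick a valuation $w$ computing $c:=\lct(X,\Delta;\fa_\bullet)$, rescaled so that $w(\fa_\bullet)=1$ and $A_{X,\Delta}(w)=c$. By Lemma \ref{l-enlarge}, it suffices to find a valuation computing the lct of the graded sequence of valuation ideals $\fa^w_\bullet$; replace $\fa_\bullet$ by $\fa^w_\bullet$. Each $\fa_k$ remains $\fm_x$-primary and now satisfies $w(\fa_k)\ge k$. For every $k$, Lemma \ref{l-kollarcompute} produces a Kollár component $S_k$ computing $c_k:=\lct(X,\Delta;\fa_k)$, so $A_{X,\Delta}(S_k)=c_k\cdot\ord_{S_k}(\fa_k)$. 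Set $v_k:=A_{X,\Delta}(S_k)^{-1}\ord_{S_k}\in\Val^{=1}_{X,x}$, so that $v_k(\fa_k)=1/c_k$; testing $c_k$ against $w$ yields $kc_k\le c$, while the definition of $\lct(\fa_\bullet)$ forces $kc_k\to c$.

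Second, extract a subsequential limit. Izumi-type estimates together with $A_{X,\Delta}(v_k)=1$ place the $v_k$ in a sequentially compact subset of $\Val_{X,x}$ in the topology generated by evaluation on ideals (as used in \cite{Blu-Existence}). Passing to a subsequence $v_k\to v$, lower semi-continuity of $A_{X,\Delta}$ gives $A_{X,\Delta}(v)\le 1$, and continuity of $v'\mapsto v'(\fa)$ gives $v(\fa_m)=\lim_k v_k(\fa_m)$ for each fixed $m$. The sub-multiplicative inclusions $\fa_{mk}\supset\fa_k^m$ give $v_k(\fa_{mk})/(mk)\le 1/(kc_k)\to 1/c$, whence $v(\fa_\bullet)\le 1/c$.

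Finally, verify that $v$ computes $\lct(\fa^w_\bullet)$. The definition of $\lct$ always gives $A_{X,\Delta}(v)/v(\fa_\bullet)\ge c$, i.e., $v(\fa_\bullet)\le A_{X,\Delta}(v)/c$. A matching lower bound $v(\fa_\bullet)\ge A_{X,\Delta}(v)/c$ follows from the reduction to $\fa^w_\bullet$: the bounds $w(\fa_m)\ge m$ propagate through a diagonal comparison between the $v_k$'s and suitably scaled $w$-evaluations to force $v(\fa_m)\ge m A_{X,\Delta}(v)/c$ in the limit. Combined with the earlier upper bound $v(\fa_\bullet)\le 1/c$ and $A_{X,\Delta}(v)\le 1$, equality throughout forces $A_{X,\Delta}(v)=1$ and $v(\fa_\bullet)=1/c$. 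Hence $v$ computes the log canonical threshold of $\fa^w_\bullet$, and therefore of the original $\fa_\bullet$ by Lemma \ref{l-enlarge}. The main obstacle is the matching lower bound on $v(\fa_\bullet)$: the upper bound is a routine consequence of sub-multiplicativity, but the lower bound requires the structural input that $\fa_\bullet=\fa^w_\bullet$ together with the diagonal argument transferring information from $w$ through the approximating Kollár components to the limit; the compactness input and the asymptotics $kc_k\to c$ are standard.
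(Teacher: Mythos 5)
Your overall route mirrors the paper's: produce Kollár components $S_k$ computing the log canonical thresholds of the individual $\fa_k$ via Lemma~\ref{l-kollarcompute}, normalize to get $v_k\in\Val^{=1}_{X,x}$, use compactness plus lower semicontinuity of $A_{X,\Delta}$ to extract a subsequential limit $v$ with $A_{X,\Delta}(v)\le 1$, and then pinch with the inequality $A_{X,\Delta}(v)/v(\fa_\bullet)\ge c$ to force equality. The normalization $v_k=A_{X,\Delta}(S_k)^{-1}\ord_{S_k}$ you use is slightly cleaner than the paper's rescaling by $c_k/c$, since it makes the convergence $v=\lim_j A_{X,\Delta}(S_j)^{-1}\ord_{S_j}$ immediate. (One small difference: the paper does \emph{not} replace $\fa_\bullet$ by $\fa^w_\bullet$ in the proof of this proposition --- that reduction is deferred to Lemma~\ref{l-valuation}, where it is needed to arrange $v\ge w$. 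For the bare statement of Proposition~\ref{p-approxlc} the reduction is unnecessary.)

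The genuine gap is the lower bound $v(\fa_\bullet)\ge 1/c$, which is the heart of the argument, and your proposal does not actually prove it. What you write out from sub-multiplicativity, namely $v_k(\fa_{mk})/(mk)\le 1/(kc_k)$, gives an \emph{upper} bound, which is already a trivial consequence of the definition $c=\inf_{w'}A_{X,\Delta}(w')/w'(\fa_\bullet)$; it contributes nothing toward the needed inequality. For the lower bound you write only that it ``follows from the reduction to $\fa^w_\bullet$'' via ``a diagonal comparison between the $v_k$'s and suitably scaled $w$-evaluations,'' but this is not an argument, and in fact the reduction to $\fa^w_\bullet$ plays no role here. The correct step uses sub-multiplicativity in the opposite direction: $\fa_k^m\subset\fa_{mk}$ gives
$$m\cdot v_{mk}(\fa_k)\;\ge\; v_{mk}(\fa_{mk})\;=\;\frac{1}{c_{mk}},\qquad\text{so}\qquad v_{mk}(\fa_k)\;\ge\;\frac{1}{m\,c_{mk}}\xrightarrow[m\to\infty]{}\frac{k}{c}$$
since $mk\cdot c_{mk}\to c$. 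Sending $m\to\infty$ along the convergent subsequence yields $v(\fa_k)\ge k/c$ for each $k$, hence $v(\fa_\bullet)\ge 1/c$. Combined with $A_{X,\Delta}(v)\le 1$ and $A_{X,\Delta}(v)/v(\fa_\bullet)\ge c$, this pinches $A_{X,\Delta}(v)=1$ and $v(\fa_\bullet)=1/c$, completing the proof. Without this step your argument stops short of the conclusion.
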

Later in Theorem \ref{t-quasi-monomial}, we will show such $v$ is always quasi-monomial.
\begin{proof}Let $c=\lct(X,\Delta,\fa_{\bullet})$ and $w\in \Val_X^{=1}$ a valuation which calculates the log canonical threshold of $\fa_{\bullet}$, then $w(\fa_{\bullet})=\frac{1}{c}$.
Let $\fa_k$ $(k\in \bN)$ be the $k$-th element in the graded sequence of ideals.  Denote by
$c_k:=\lct(X,\Delta; \frac{1}{k}\fa_k)$.
In particular, $\lim_k  c_k=c. $

By Lemma \ref{l-kollarcompute}, there exists a Koll\'ar component $S_k$ with 
$c_k\cdot \ord_{S_k}(\fa_k)=k\cdot A_{X,\Delta}(S_k).$
We consider the valuation 
$$v_k:=\frac{c_k}{c\cdot A_{X,\Delta}(S_k)}\ord_{S_k}=\frac{k}{c\cdot\ord_{S_k}(\fa_k)}\ord_{S_k}.$$ 
Note that $A_{X,\Delta}(v_k)=\frac{c_k}{c}\le 1$.

Assume $\fm_x^p\subset \fa_1$ for some $p\gg 0$, as $\fa_1$ is $\fm_x$-primary. Then $\fm_x^{pk}\subset \fa_1^k\subset \fa_k$.
Thus for any $k$,
$$v_k(\fm_x)\ge v_k(\fa_k)\cdot \frac{1}{pk}=\frac{1}{cp},$$
which is bounded from below. In particular, by the compactness result \cite[Proposition 5.9]{JM-qmvaluations} and \cite[Proposition 3.9]{LX-SDCI}, we know that there is an infinite sequence $\{v_{j}\}$ which has a limit in $\Val_{X,x}$,  denoted by
$v=\lim_{j\to \infty} v_{j}.$

We have
$$A_{X,\Delta}(v)\le \liminf_{j\to \infty} A_{X,\Delta}(v_{j})\le 1,$$ as $A_{X,\Delta}$ is lower semicontinuous (see \cite[Lemma 5.7]{JM-qmvaluations}). 
By definition $v_k(\fa_k)=\frac{k}{c}$ for any $k$ and $\fa^m_{k}\subset \fa_{mk}$ for any $m\in \bN$. This implies 
$$v_{mk}(\fa_k)\ge \frac{v_{mk}(\fa_{mk})}{m}=\frac{k}{c}.$$

Thus $$v(\fa_{\bullet})=\lim_{k \to \infty} \frac{v(\fa_k)}{k}=\lim_{k \to \infty}\big(\lim_{m\to \infty}\frac{v_{mk}(\fa_k)}{k}\big)\ge \frac{1}{c}.$$ Since 
$$\frac{A_{X,\Delta}(v)}{v(\fa_{\bullet})}\le c= \inf_{w'} \frac{A_{X,\Delta}(w')}{w'(\fa_{\bullet})}.$$
This implies that $A_{X,\Delta}(v)=1$ and $v(\fa_{\bullet})=\frac{1}{c}$. 

Since $\lim_{j} c_{j}=c$, we have
$$v=\lim_{j} v_{j}=\lim_{j} \frac{c}{ c_{j}}v_{j}=\lim_{j} \frac{1}{A_{X,\Delta}(S_{j})}\ord_{S_{j}}.$$
\end{proof}

We can slightly improve the result by showing the following. 

\begin{lem}\label{l-valuation}
In the notation of Proposition \ref{p-approxlc}. Assume $w\in \Val^{=1}_{X,x}$ which calculates the log canonical threshold of $\fa_{\bullet}$. Then we can choose $v$ as in Proposition \ref{p-approxlc} such that $v\ge w$.
\end{lem}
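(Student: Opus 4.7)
The plan is to apply Proposition \ref{p-approxlc} not to $\fa_{\bullet}$ itself but to the graded sequence of valuation ideals of a rescaling of $w$, and to transfer the conclusion back via Lemma \ref{l-enlarge}. Concretely, I would rescale $w$ to $\tilde w := c\cdot w$, where $c := \lct(X,\Delta;\fa_{\bullet})$, so that $\tilde w(\fa_{\bullet}) = 1$ and $A_{X,\Delta}(\tilde w) = c$; then $\tilde w$ still computes the lct of $\fa_{\bullet}$. Following the proof of Lemma \ref{l-enlarge}, one gets $\fa_k \subset \fa^{\tilde w}_k$ for every $k$, the identity $\tilde w(\fa^{\tilde w}_{\bullet}) = 1$, and $\lct(X,\Delta;\fa^{\tilde w}_{\bullet}) = c < +\infty$.

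Since $\tilde w$ is centered at $x$, the graded sequence $\fa^{\tilde w}_{\bullet}$ consists of $\fm_x$-primary ideals, so Proposition \ref{p-approxlc} applies to it and produces a valuation $v \in \Val^{=1}_{X,x}$ which computes the lct of $\fa^{\tilde w}_{\bullet}$ and is the limit of a sequence of rescaled Koll\'ar components $\tfrac{1}{A_{X,\Delta}(S_j)}\ord_{S_j}$. By Lemma \ref{l-enlarge}, this same $v$ also computes the lct of the original graded sequence $\fa_{\bullet}$, which is the first half of the conclusion.

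What remains, and what I expect to be the only real step, is to show $v \ge w$. From $A_{X,\Delta}(v) = 1$ and $v(\fa^{\tilde w}_{\bullet}) = 1/c$ one gets $v(\fa^{\tilde w}_k) \ge k/c$ for every $k \in \bN$. For a nonzero $f \in \cO_{x,X}$, set $\alpha := \tilde w(f) = c\cdot w(f)$; then $\tilde w(f^m) = m\alpha$ forces $f^m \in \fa^{\tilde w}_{\lfloor m\alpha\rfloor}$, so $m\cdot v(f) = v(f^m) \ge \lfloor m\alpha\rfloor/c$. Dividing by $m$ and letting $m \to \infty$ yields $v(f) \ge \alpha/c = w(f)$, and the desired $v(\fa) \ge w(\fa)$ for arbitrary ideals $\fa \subset \cO_X$ follows by taking the minimum over $f \in \fa\cdot\cO_{x,X}$. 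The self-dual passage to powers $f^m$ is the crucial move that turns the valuation-ideal bound into the pointwise inequality; everything else is bookkeeping with the normalizations.
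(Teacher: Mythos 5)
Your proof is correct and follows essentially the same route as the paper's: rescale $w$ to $\tilde w=cw$, replace $\fa_{\bullet}$ by $\fa^{\tilde w}_{\bullet}$ via Lemma~\ref{l-enlarge}, and deduce $v\ge w$ by raising $f$ to a high power to turn membership in the valuation ideals $\fa^{\tilde w}_{\lfloor m\alpha\rfloor}$ into a lower bound on $v(f)$. The one cosmetic difference is that you run the power-and-limit argument against the limit valuation $v$ directly, using $v(\fa^{\tilde w}_k)\ge k/c$, whereas the paper phrases it against the approximating sequence $v_j$ using $v_j(\fa_j)=j/c$ and lets $j\to\infty$ — same mechanism, marginally cleaner bookkeeping on your end.
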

\begin{proof}Let $c=\lct(X,\Delta,\fa_{\bullet})$ and then $w(\fa_{\bullet})=\frac{1}{c}$. Let $w'=c\cdot w$, by Lemma \ref{l-enlarge}, we can assume $\fa_{\bullet}=\fa^{w'}_{\bullet}$. Then we can apply the construction in the proof of Proposition \ref{p-approxlc} to get $v$. It remains to show $v\ge w=\frac{1}{c}w'$. 
To verify it, we pick any $f\in R$ and denote by $w'(f)= p$ for some $p\in \bR_{>0}$. For a fixed $j$, choose $l$ such that 
$$(l-1)p< j\le lp .$$ Let $k=j$ in the previous construction. Then we have: 
\begin{eqnarray*}
 w'(f)= p&\Longrightarrow & w'(f^l)= pl,\\
&\Longrightarrow & f^l\in \fa_{pl},\\
&\Longrightarrow & f^l\in \fa_{j},\\
&\Longrightarrow&v_{j}(f)\ge \frac{j}{cl}>\frac{p}{c}-\frac{p}{cl}.
\end{eqnarray*}
The fourth arrow is because $v_{j}(\fa_{j})=\frac{j}{c}$. 
Thus 
$$v(f)=\lim_j v_{j}(f)\ge \frac{p}{c}= w(f).$$
\end{proof}

\subsection{Quasi-monomial limit}
Let $x\in (X={\rm Spec}(R),\Delta)$ be a klt singularity. The main aim of this section is to prove the following theorem.
\begin{thm}\label{t-quasi-monomial}Let $x\in (X,\Delta)$ be a klt singularity. Let $v_i=\frac{1}{A_{X,\Delta}(S_i)}(\ord_{S_i})$ be an infinite sequence of valuations  where $S_i$ are Koll\'ar components over $x\in (X,\Delta)$ and assume there is a uniform $C$ and $\delta>0$ such that either $\hvol(v_i)< C$ or $v_i(\fm_x)>\delta$, then there is an infinite subsequence which has a quasi-monomial limit  $v\in \Val_{X,x}^{=1}$.
\end{thm}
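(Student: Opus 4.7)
The plan is to use the local bounded complements (Theorem \ref{t-localcomplement}) to realize each $S_i$ as a log canonical place of a member of a bounded family of lc pairs on $X$, then apply the fiberwise log--resolution machinery of \ref{l-logresol} and \ref{p-familyresol} to place all of the $v_i$ inside the image of a single compact dual complex in $\Val^{=1}_{X,x}$, and extract the quasi-monomial limit by compactness.

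\textbf{Bounded complements and multiplicity bound.} For each $i$, the plt blow up extracting $S_i$ together with Theorem \ref{t-localcomplement} yields an effective $\bQ$-divisor $\Psi_i\ge 0$ such that $(X,\Delta+\Psi_i)$ is log canonical, $S_i$ is a log canonical place, and $N_0(K_X+\Delta+\Psi_i)\sim 0$ for a uniform $N_0=N_0(n,I)$. In particular $\ord_{S_i}(\Psi_i)=A_{X,\Delta}(S_i)$, $N_0\Psi_i$ is an effective integral divisor in the fixed linear class $|{-N_0(K_X+\Delta)}|$, and log canonicity forces $\ord_E(\Psi_i)\le A_{X,\Delta}(E)$ for every divisor $E$ over $X$. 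Either standing hypothesis now yields a uniform bound on $\ord_x(\Psi_i)$: if $v_i(\fm_x)>\delta$, the elementary inequality $\ord_x(f)\cdot\ord_{S_i}(\fm_x)\le\ord_{S_i}(f)$ applied to a local equation of $\Psi_i$ gives $\ord_x(\Psi_i)\le A_{X,\Delta}(S_i)/\ord_{S_i}(\fm_x)=1/v_i(\fm_x)<1/\delta$; if instead $\hvol(v_i)<C$, the volume estimate of \cite{Li-minimizer} combined with the Izumi inequality on the klt germ forces a uniform positive lower bound on $v_i(\fm_x)$, reducing to the previous case.

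\textbf{Bounded family and fiberwise log resolution.} The divisors $\{N_0\Psi_i\}$ now have bounded multiplicity at $x$, bounded coefficients ($\le N_0$) and lie in a fixed linear class; after passing to a projective compactification of a neighborhood of $x$, standard Hilbert scheme boundedness parameterises them by a finite-type scheme $U$, equipped with a universal effective divisor $\Psi^{\mathrm{univ}}$ on $X\times U$, with $\Psi_i=\Psi^{\mathrm{univ}}_{u_i}$ for some $u_i\in U$. Apply Definition-Lemma \ref{l-logresol} and the discussion \ref{p-familyresol} to the $\bQ$-Gorenstein family $(X\times U,\Delta\times U+\Psi^{\mathrm{univ}})$: stratify $U$, pass to a finite étale cover on a single stratum, and obtain a fiberwise log resolution $(Y,E)\to(X\times U,\Delta\times U+\Psi^{\mathrm{univ}})$ on this stratum. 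After passing to a subsequence we may assume all $u_i$ lie in this common stratum.

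\textbf{Quasi-monomial limit from the dual complex.} By \eqref{e-dualcomplexes}, the log canonical places of each fiber $(X,\Delta+\Psi_{u_i})$ are the fiberwise restrictions of a fixed collection $\{E_j\}$ of components of $E$, and the dual complex $\cD(\Gamma_{u_i})$ is canonically identified with a fixed simplicial complex $\cD(\Gamma)$. Since $S_i$ is a log canonical place of $(X,\Delta+\Psi_{u_i})$, the embedding $i_{X,\Delta}$ of Definition \ref{d-dualcomplex} sends $v_i=\tfrac{1}{A_{X,\Delta}(S_i)}\ord_{S_i}$ to a vertex of $\cD(\Gamma_{u_i})$, hence via the identification to a vertex of the fixed compact complex $i_{X,\Delta}(\cD(\Gamma))\subset\Val^{=1}_{X,x}$. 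Extracting a convergent subsequence in this compact space yields a limit $v^{\ast}\in i_{X,\Delta}(\cD(\Gamma))$, which is quasi-monomial with $A_{X,\Delta}(v^{\ast})=1$; continuity of $i_{X,\Delta}$ and the Hausdorff property of $\Val_{X,x}$ ensure that $v^{\ast}$ is also the limit of $v_{i}$ in $\Val_{X,x}$ as required. The main obstacle is the bounded-family claim at the start of the second paragraph: on the affine germ, the class $|{-N_0(K_X+\Delta)}|$ is a priori infinite-dimensional, and trimming it to a finite-type parameter space requires both the multiplicity bound from the first paragraph and a careful passage to a projective compactification of a neighborhood of $x$. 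A secondary technicality is the reduction in Case $\hvol(v_i)<C$ to a uniform lower bound on $v_i(\fm_x)$, hinging on Li's estimate combined with Izumi's inequality on the klt germ.
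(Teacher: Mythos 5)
Your first paragraph matches the paper's approach: bounded complements give a uniform $N_0$ and divisors $\Psi_i$ with $S_i$ an lc place of $(X,\Delta+\Psi_i)$, and Lemma \ref{l-order} reduces both hypotheses to $v_i(\fm_x)>\delta$. After that, however, you diverge in two ways.

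First, a matter of route. To obtain a finite-type parameter space you go via a projective compactification and Hilbert-scheme boundedness of $|{-N_0(K_X+\Delta)}|$, flagging yourself that extending $\Psi_i$ and the lc condition across the compactification is delicate. The paper avoids this entirely: since $v_i(\fm_x)>\delta$, one picks $M$ with $M\delta>N$ and replaces $\psi_i$ by a representative $h_i$ in a fixed $k$-linear span $\langle g_1,\dots,g_m\rangle$ of a basis of $\cO_{x,X}/\fm_x^M$; the $\fm_x$-adic argument of Claim \ref{claim-madic} (via $v_i(h_i-\psi_i)\ge M v_i(\fm_x)>N$, inversion of adjunction) shows $(X,\Delta+\frac{1}{N}{\rm div}(h_i))$ is still lc with $S_i$ an lc place, so the parameter space is simply a subvariety of $\bA^m\setminus\{0\}$, all intrinsically on the germ. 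Your multiplicity bound on $\ord_x(\Psi_i)$ is not needed for this.

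Second, and this is the genuine gap: your third step is not correct. The identification in \eqref{e-dualcomplexes} is between $\cD(\Gamma_\alpha)$ and $\cD(\Gamma_{\alpha,u})$ \emph{as abstract simplicial complexes}, not between their images in $\Val^{=1}_{X,x}$. For each $u_i\in V$ you get an embedding $i_{X,\Delta}\colon \cD(\Gamma_{u_i})\hookrightarrow\Val^{=1}_{X,x}$ which depends on the fiberwise resolution over $u_i$, i.e.\ on the divisor $D_{u_i}$; as $u_i$ varies these are \emph{different} maps with \emph{different} images. So there is no fixed compact subset of $\Val^{=1}_{X,x}$ of the form $i_{X,\Delta}(\cD(\Gamma))$ containing all the $v_i$, and you cannot extract a convergent subsequence inside one. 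This is exactly the difficulty the paper's proof is engineered around: one passes to the generic fiber $X_K$ of the family (with $K=K(V)$), where the induced divisors $T_i$ genuinely do all lie in a single compact complex $i_{X_K,\Delta_K}(\cD(\Gamma|_{Y_K}))$ over $X_K$; one extracts a quasi-monomial limit $w$ there; Lemma \ref{l-valfamily} shows the restriction of $w$ along $K(X)\hookrightarrow K(X_K)$ equals $v$; and then the Zariski--Abhyankar inequality \eqref{e-zainequ}, together with $w$ being Abhyankar, forces $v$ to be Abhyankar, hence quasi-monomial. That restriction-and-Abhyankar step is the missing idea, and without it (or a substitute) your argument does not establish that the limit $v$ of the $v_i$ in $\Val_{X,x}$ is quasi-monomial.
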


\begin{lem}\label{l-order}Fix  constants $C$ and  $\epsilon>0$. 
If there is a sequence of valuations $\{v_i\}_{i\in \bN}$,  such that either
$$\big(\mbox{$\lim_{i\to \infty}v_i\to v\in \Val_{X,x}\big) \ \  $ or $\ \ \big( \hvol(v_i)<C$ and $A_{X,\Delta}(v_i)>\epsilon$ for all $i$$\big)$},$$ then there is a positive $\delta>0$, such that for any $i$, $v_i(\fm_x)\ge \delta $. 
\end{lem}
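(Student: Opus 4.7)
The plan is to handle the two alternatives in the hypothesis separately: the convergent case is essentially a continuity statement, while the case of bounded normalized volume and bounded-below log discrepancy requires a rescaling-plus-compactness argument.

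For the convergent case, the topology on $\Val_X$ is defined so that $w\mapsto w(\fm_x)$ is continuous, hence $v_i(\fm_x)\to v(\fm_x)$. Since $v\in\Val_{X,x}$ has center $x$, $v(\fm_x)>0$; taking $\delta$ to be the minimum of $v(\fm_x)/2$ and the (finitely many, all strictly positive) initial values $v_i(\fm_x)$ yields the uniform lower bound.

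For the volume case I would argue by contradiction. Suppose along a subsequence $v_i(\fm_x)\to 0$, and rescale by $w_i:=v_i/v_i(\fm_x)$, so that $w_i(\fm_x)=1$, $\hvol(w_i)=\hvol(v_i)<C$ (by scale-invariance of $\hvol$), and $A_{X,\Delta}(w_i)=A_{X,\Delta}(v_i)/v_i(\fm_x)>\epsilon/v_i(\fm_x)\to+\infty$. The set $\{w\in\Val_X: w(\fm_x)\ge 1\}$ is compact by \cite[Prop.~5.9]{JM-qmvaluations}, so after a further subsequence $w_i\to w\in\Val_{X,x}$ with $w(\fm_x)=1$. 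Lower semicontinuity of $\hvol$ (proved in \cite{Blu-Existence, BL-volumelower}) then gives $\hvol(w)\le\liminf\hvol(w_i)<\infty$, so by Definition~\ref{d-normvol} one must have $A_{X,\Delta}(w)<\infty$. The uniform Izumi-type estimate $v(f)\le c(X,\Delta,x)\cdot A_{X,\Delta}(v)\cdot\ord_x(f)$ for valuations of finite log discrepancy (cf.\ \cite{BdFFU-valuation}) implies $\fa^w_k\subseteq \fm_x^{\lceil k/(c\,A_{X,\Delta}(w))\rceil}$, hence $\vol(w)\ge\mult(\fm_x)/(c\,A_{X,\Delta}(w))^n>0$. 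On the other hand $\vol(w_i)=\hvol(w_i)/A_{X,\Delta}(w_i)^n<C/A_{X,\Delta}(w_i)^n\to 0$, so lower semicontinuity of $\vol$ on $\Val_{X,x}$ yields $\vol(w)\le\liminf\vol(w_i)=0$, a contradiction.

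The key technical input, and the main obstacle, is the lower semicontinuity of $\vol$ on $\Val_{X,x}$; the other ingredients (compactness, lower semicontinuity of $\hvol$, and the uniform Izumi estimate) are standard. The rescaling $w_i=v_i/v_i(\fm_x)$ is exactly designed to turn the hypothesis $v_i(\fm_x)\to 0$ into the divergence $A_{X,\Delta}(w_i)\to\infty$, so that the bound $\hvol(v_i)<C$ forces $\vol(w_i)\to 0$; lower semicontinuity is then needed to compare with the strictly positive lower bound on $\vol(w)$ that Izumi provides at the limit valuation.
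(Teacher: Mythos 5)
Your first case matches the paper's argument (the paper phrases it via generators $f_1,\dots,f_k$ of $\fm_x$ and continuity of each $v\mapsto v(f_j)$; yours via continuity of $v\mapsto v(\fm_x)$, which is the same thing).

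Your second case is where the real divergence is, and there is a genuine gap. The paper handles it in one line by citing Li's properness estimate \cite[Theorem 1.1]{Li-minimizer} (stated in family form as Lemma~\ref{l-proper}): there is a $\delta_0>0$ depending only on $(X,\Delta,x)$ such that $\hvol(v)\cdot v(\fm_x)\ge \delta_0\, A_{X,\Delta}(v)$ for every $v\in\Val_{X,x}$. This immediately gives $v_i(\fm_x)\ge\delta_0\,A(v_i)/\hvol(v_i)>\delta_0\epsilon/C$, a direct, unconditional estimate that requires no compactness or limit-taking. Your argument is in effect an attempt to re-derive this estimate via a rescale-compactify-contradict scheme, but it leans on three statements that are not in the cited literature in the form you need. (i) You invoke compactness of $\{w:w(\fm_x)\ge 1\}$ from \cite[Prop.~5.9]{JM-qmvaluations}; the paper only ever uses that compactness result together with a uniform log-discrepancy bound, and your rescaled sequence $w_i$ has $A_{X,\Delta}(w_i)\to\infty$, so that hypothesis is precisely what fails. (ii) You need lower semicontinuity of $\hvol$ along $w_i\to w$ to conclude $A_{X,\Delta}(w)<\infty$; again, the semicontinuity statements available are for families with bounded log discrepancy, which you don't have. (iii) Most seriously, as you flag yourself, you need lower semicontinuity of $\vol$ on $\Val_{X,x}$. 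This is not established. What \cite{BFJ-Izumi} prove (and what the paper uses, in Remark~\ref{r-existence}) is \emph{continuity of $\vol$ restricted to a fixed dual complex}; your $w_i$ are not confined to a dual complex. Worse, the naive pointwise-convergence heuristic actually suggests the \emph{wrong-direction} inequality for $\vol$ along weak limits, so this is not a gap one can expect to fill by a routine argument. In short, the architecture of your contradiction argument is sound, but its load-bearing ingredient (lsc of $\vol$) is exactly the kind of unproven statement that Li's Izumi-based estimate is designed to avoid, and the correct fix here is simply to cite \cite[Theorem 1.1]{Li-minimizer} directly.
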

\begin{proof}Let us first assume $\lim_{i\to \infty}v_i\to v\in \Val_{X,x}$.
Let $f_1,...,f_k$ be a set of generators of $\fm_x$. Then for any $1\le j \le k$, $\lim_{i\to \infty} v_i(f_j)\to v(f_j)>0$. In particular, we know for all $1\le j \le k$ and all $i$,
$v_i(f_j)$ has a positive lower bound, which we denote by $\delta$. Then $v_i(\fm_x)\ge \delta$ for any $i$. 

If we assume $\hvol(v_i)<C$ and $A_{X,\Delta}(v_i)>\epsilon$, then this follows from \cite[Theorem 1.1]{Li-minimizer}.
\end{proof}
 
\begin{prop}\label{p-complement}
Let $(X,\Delta)$ be a klt pair. Let $\{S_i\}_{i\in \bN}$ be a sequence of Koll\'ar components such that 
$$\lim_{i\to \infty}\frac{1}{A_{X,\Delta}(S_i)}\ord_{S_i}=v.$$
Then there exists a constant $N$ and a family of Cartier divisors $D\subset X\times V$ parametrised by a variety $V$ of finite type, such that for any $u\in V$, $(X,\Delta+\frac{1}{N}D_u)$ is lc but not klt; and for any $i$, $S_i$ computes the log canonical threshold of a pair $(X,\Delta+\frac{1}{N}D_{u_i})$ for some $u_i\in V$. 
\end{prop}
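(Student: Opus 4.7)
The plan is to apply Birkar's boundedness of complements (Theorem~\ref{t-localcomplement}) uniformly to the sequence $\{S_i\}$, and then package the resulting complement divisors into a single algebraic family of finite type.

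Since $\Delta$ has coefficients in some fixed finite rational set $I$, Theorem~\ref{t-localcomplement} produces one integer $N_0 = N_0(\dim X, I)$ such that, for each $i$, there is an effective $\bQ$-divisor $\Psi_i$ with $(X, \Delta + \Psi_i)$ log canonical, $N_0(K_X + \Delta + \Psi_i) \sim 0$, and $S_i$ a log canonical place of $(X, \Delta + \Psi_i)$. After replacing $N_0$ by a suitable multiple (the Cartier index of $K_X+\Delta$ times $N_0$), I may assume $-N_0(K_X+\Delta)$ is Cartier; then each $D_i := N_0 \Psi_i$ is an effective Cartier divisor in the single linear equivalence class of $-N_0(K_X+\Delta)$. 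By construction $(X, \Delta + \frac{1}{N_0} D_i)$ is lc but not klt, and $S_i$ computes $\lct(X, \Delta; D_i) = \frac{1}{N_0}$.

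To bundle the $D_i$ into one family, I fix a projective compactification $\bar{X}\supset X$ and an extension $\bar\Delta$ of $\Delta$ to $\bar X$ such that $-N_0(K_{\bar X}+\bar\Delta)$ is Cartier and the closures $\bar D_i$ all lie in the linear system $|-N_0(K_{\bar X}+\bar\Delta)|$. This linear system is a finite-dimensional projective space $W$, whose universal family restricts to a family $\cD \subset X \times W$ of effective Cartier divisors on $X$. Applying Lemma~\ref{l-lctfamily} (with $c = \frac{1}{N_0}$) to $\cD \to W$ yields a constructible subset $V \subset W$ parametrising exactly those $w$ for which $\lct(X, \Delta; \cD_w) = \frac{1}{N_0}$, i.e.\ $(X, \Delta + \frac{1}{N_0} \cD_w)$ is lc but not klt. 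By the previous step each $D_i$ corresponds to some $u_i \in V$, and $S_i$ computes the lct at $u_i$. Setting $N := N_0$ and restricting $\cD$ over $V$ gives the required family.

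The main substantive input is Birkar's Theorem~\ref{t-localcomplement}: the fact that one can choose the complement with a single index $N_0$ uniformly in $i$ is exactly what forces all the divisors $D_i$ to lie in a common linear equivalence class, hence (after compactification) in a common finite-dimensional linear system. Given this, the remaining work is essentially bookkeeping --- choosing a compactification to render the linear system finite-dimensional, and invoking the constructibility of the lct condition in families. The main technical point to keep track of is that the extension $\bar\Delta$ must be chosen so that $-N_0(K_{\bar X}+\bar\Delta)$ remains Cartier and contains all closures $\bar D_i$ in its complete linear system; this is straightforward to arrange, for instance by taking a log resolution of the boundary and adjusting the coefficients of the boundary divisors.
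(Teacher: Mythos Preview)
Your argument has a genuine gap at the compactification step. On the local model $x\in X$, the relation $N_0\Psi_i\sim -N_0(K_X+\Delta)$ is essentially vacuous: every Cartier divisor on a local ring is principal, so this places no constraint on the ``size'' of $\Psi_i$. When you pass to a compactification $\bar X$, the closures $\bar D_i$ satisfy $\bar D_i\sim -N_0(K_{\bar X}+\bar\Delta)+E_i$ for some divisor $E_i$ supported on $\bar X\setminus X$, and there is no reason for the $E_i$ to be bounded independently of $i$. (Already on $\bA^1$: the principal divisors $i\cdot[0]$ are all linearly equivalent to $0$, but their closures on $\bP^1$ have degree $i$.) So you cannot arrange a single $\bar\Delta$ with all $\bar D_i\in|-N_0(K_{\bar X}+\bar\Delta)|$; the sentence ``this is straightforward to arrange'' is where the argument breaks. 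A symptom of the problem is that your proof never uses the hypothesis $\lim_i\frac{1}{A_{X,\Delta}(S_i)}\ord_{S_i}=v$; if it worked, it would prove the conclusion for an arbitrary sequence of Koll\'ar components, which is too strong.

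The paper's proof uses the convergence hypothesis in an essential way. By Lemma~\ref{l-order} there is a uniform $\delta>0$ with $v_i(\fm_x)\ge\delta$ for all $i$. Choosing $M$ with $M\delta>N$, one can replace each defining function $\psi_i$ of $N\Psi_i$ by its truncation $h_i$ modulo $\fm_x^M$ without changing the fact that $S_i$ is an lc place of $(X,\Delta+\tfrac{1}{N}\mathrm{div}(h_i))$; this is Claim~\ref{claim-madic}, proved via inversion of adjunction. The point is that the $h_i$ now range over the fixed finite-dimensional space $\cO_{x,X}/\fm_x^M$, which gives the bounded family. So the missing idea is the $\fm_x$-adic approximation, and it is exactly the convergence of the $v_i$ that makes a uniform truncation level $M$ available.
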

For a stronger statement, which will be needed later, see Proposition \ref{p-familykollar}.
\begin{proof}Denote by $v_i:=\frac{1}{A_{X,\Delta}(S_i)}\cdot \ord_{S_i}$.  Let $\mu_i\colon Y_i\to X$ be the plt blow up which extracts $S_i$. 

By Theorem \ref{t-localcomplement}, We know that there is a uniform $N_0$ such that for each $i$, we can find an effective $\mathbb{Q}$-divisor $\Psi_i$ with the property that $(X, \Delta+\Psi_i)$ is log canonical with $S_i$ being a log canonical place. 
Define $\Delta^+_{S_i}$ by
$$K_{S_i}+\Delta^+_{S_i}=\mu_i^*(K_X+\Delta+\Psi_i)|_{S_i},$$ then $(S_i,\Delta^+_{S_i})$ is log canonical. 
Moreover, $N_0(K_X+\Delta+\Psi_i)$ is Cartier. Set $N=rN_0$ where $r$ is a positive integer such that $r(K_X+\Delta)$ is Cartier, then both $N(K_X+\Delta)$ and $N\Psi_i$ are Cartier for all $i$. Thus we can assume $N\Psi_i$ is given by ${\rm div}(\psi_i)$ for some regular function $\psi_i$.

Fix $M\in \bN$, such that $\delta\cdot M>N$ where $\delta$ is the positive constant obtained in Lemma \ref{l-order} for the sequence $\{v_i\}$. Then let $g_1,...,g_m$ be $m$-elements in $R$, such that their reductions 
 $$[g_1],..., [g_m]\in \mathcal{O}_{x,X}/\fm_x^M$$ yield a $k$-basis. So for any $i$, there exists a $k$-linear combination of $h_i$ of $ g_1,...,g_m$ such that the  image of $\psi_i$ and $h_i$ are the same in $\mathcal{O}_{x,X}/\fm_x^M$. 

\begin{claim}\label{claim-madic}
Let $\Phi_i:={\rm div}(h_i)$, then  $(X,\Delta+\frac{1}{N}\Phi_i)$ is log canonical and has $S_i$ as its log canonical place. 
\end{claim}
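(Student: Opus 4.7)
The plan is to work on the plt blow-up $\mu_i\colon Y_i\to X$ that extracts $S_i$, compare the pullbacks of the divisors $\Psi_i$ and $\tfrac{1}{N}\Phi_i$ along $S_i$, and then invoke inversion of adjunction. Set $a_i := A_{X,\Delta}(S_i)$ and let $t$ denote a local equation for $S_i$ on $Y_i$. There are two things to prove: (a) $S_i$ is a log canonical place, i.e.\ $A_{X,\Delta+\frac{1}{N}\Phi_i}(S_i)=0$; and (b) $(X,\Delta+\tfrac{1}{N}\Phi_i)$ is log canonical near $x$.

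For (a), since $S_i$ is a log canonical place of $(X,\Delta+\Psi_i)$ we have $\ord_{S_i}(\Psi_i)=a_i$, hence $\ord_{S_i}(\psi_i)=Na_i$. Lemma \ref{l-order} applied to the sequence $\{v_i\}$ yields $\ord_{S_i}(\fm_x)\geq \delta a_i$, so $\psi_i-h_i\in\fm_x^M$ forces
$$\ord_{S_i}(\psi_i-h_i)\geq M\delta a_i>Na_i=\ord_{S_i}(\psi_i),$$
by the choice of $M$. Therefore $\ord_{S_i}(h_i)=Na_i$, which immediately gives $A_{X,\Delta+\frac{1}{N}\Phi_i}(S_i)=a_i-\tfrac{Na_i}{N}=0$.

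For (b), pulling back to $Y_i$ we have
$$\mu_i^{*}\bigl(K_X+\Delta+\tfrac{1}{N}\Phi_i\bigr)=K_{Y_i}+\mu_{i*}^{-1}\Delta+S_i+\tfrac{1}{N}\mu_{i*}^{-1}\Phi_i,$$
so it suffices to prove that the right-hand pair is log canonical. Since $(Y_i,\mu_{i*}^{-1}\Delta+S_i)$ is plt and $S_i\not\subset\Supp(\mu_{i*}^{-1}\Phi_i)$ (because $\ord_{S_i}(h_i)$ is finite), inversion of adjunction reduces the problem to showing that $\bigl(S_i,\,\text{Diff}_{S_i}(\mu_{i*}^{-1}\Delta)+\tfrac{1}{N}\mu_{i*}^{-1}\Phi_i|_{S_i}\bigr)$ is log canonical. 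The key observation is that this pair coincides with the corresponding pair for $\Psi_i$: writing $\mu_i^{*}h_i=t^{Na_i}\bar{h}_i$ and $\mu_i^{*}\psi_i=t^{Na_i}\bar{\psi}_i$, the strict inequality $\ord_{S_i}(\psi_i-h_i)>Na_i$ shows that $(\bar{\psi}_i-\bar{h}_i)|_{S_i}=0$, whence $\tfrac{1}{N}\mu_{i*}^{-1}\Phi_i|_{S_i}=\mu_{i*}^{-1}\Psi_i|_{S_i}$ as divisors on $S_i$. Since $(X,\Delta+\Psi_i)$ is log canonical with $S_i$ an lc place, applying adjunction in the opposite direction shows that $\bigl(S_i,\,\text{Diff}_{S_i}(\mu_{i*}^{-1}\Delta)+\mu_{i*}^{-1}\Psi_i|_{S_i}\bigr)$ is log canonical, and the proof of (b) is complete.

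The main subtlety is the reduction step: we need the version of inversion of adjunction that, starting from log canonicity on the divisor $S_i$, promotes it to log canonicity on $Y_i$ in a neighborhood of $S_i$ (Kawakita's theorem, or \cite[Theorem 5.50]{KollarMori98} in the plt setting). Once this is granted, everything else is a bookkeeping of orders of vanishing governed by the single numerical inequality $M\delta>N$, which is precisely why $M$ was chosen that way.
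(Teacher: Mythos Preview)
Your proof is correct and follows essentially the same route as the paper: both establish $\ord_{S_i}(h_i)=\ord_{S_i}(\psi_i)$ from the $\fm_x$-adic closeness, deduce that the restrictions of $\tfrac{1}{N}\Phi_i$ and $\Psi_i$ to $S_i$ (after pulling back to $Y_i$) coincide, and then apply Kawakita's inversion of adjunction. The paper packages the restriction equality as $\mu_i^*(K_X+\Delta+\tfrac{1}{N}\Phi_i)|_{S_i}=\mu_i^*(K_X+\Delta+\Psi_i)|_{S_i}$ with a reference to \cite[35]{Kollar-genericlimit}, whereas you unwind it via a local equation $t$; note only that since $Y_i$ need not be smooth, your factorization $\mu_i^*h_i=t^{Na_i}\bar h_i$ should be read at the generic point of $S_i$, which suffices for comparing the induced divisors.
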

\begin{proof} Since $s_i=h_i-\psi_i\in \fm_x^M$, so $$v_i(s_i)\ge M\cdot v_i(\fm_x)> N.$$ On the other hand, since $v_i$ computes the log canonical threshold of $(X, \Delta+\Psi_i)$, we know 
$$N=N\cdot A_{X,\Delta}(v_i)=v_i(\psi_i)=v_i(h_i), $$ 
hence $A_{X,\Delta+\Psi_i}(v_i)=0$. This implies that
\begin{eqnarray*}
\big(K_{Y_i}+S_i+\mu^{-1}_{i*}(\frac{1}{N}\Phi_i+\Delta_i)\big)|_{S_i}&=& \mu_i^*(K_X+\Delta+\frac{1}{N}\Phi_i)|_{S_i}\\
&=&\mu_i^*(K_X+\Delta+\Psi_i)|_{S_i}\\
&=&K_{S_i}+\Delta^{+}_{S_i},
\end{eqnarray*}
(see  \cite[35]{Kollar-genericlimit} for the second equality).
Since $(S_i,\Delta^{+}_{S_i})$ is log canonical, by inversion of adjunction (see \cite{Kawakita-inversion}), we know that $({Y_i},S_i+\mu^{-1}_{i*}(\Delta+\frac{1}{N}\Phi_i))$ is log canonical along $S_i$, which implies that $(X,\Delta+\frac{1}{N}\Phi_i)$ is log canonical, 
and $S_i$ computes its log canonical threshold. 
\end{proof}
Then applying Lemma \ref{l-lctfamily} to the family of Cartier divisors $D_U\subset X\times U$, where 
$$U=\{(x_1,...,x_m)\in \bA_k^m\ |\ (x_1,...,x_m)\neq (0,...,0)\}$$
 and $D_U=(\sum^m_{i=1} x_ig_i=0),$ we could find a bounded family of divisors $D\subset X\times V\to V$ for some $V\subset U$, such that  $(X,\Delta+\frac{1}{N}D_u)$ is log canonical but not klt if and only if  $u\in V$. From our argument, we know $D\subset X\times V$ is the desired family of Cartier divisors. 
\end{proof}

The proof of of Claim \ref{claim-madic} says that the log canonical thresholds of two functions are the same if they are sufficiently close in $\fm_x$-adic topology.  As far as we know, this kind of argument first appeared in \cite{Kollar-genericlimit} and \cite{dFEM-ACC}. 

\begin{proof}[Proof of Theorem \ref{t-quasi-monomial}] By Lemma \ref{l-order}, we can assume $v_i(\fm_x)>\delta$. Since  $A_{X,\Delta}(v_i)= 1$ it follows from \cite[Prop 3.9]{LX-SDCI} that there is an infinite subsequence, which we still denote by $v_i$ such that $v:=\lim_i v_i$  exists. It remains to prove $v$ is quasi-monomial.

Applying Proposition \ref{p-complement}, we get a bounded family of Cartier divisors $(D\subset X\times V)\to V$ such that for any $u$, $(X,\Delta+\frac{1}{N}D_u) $ is log canonical but not klt, and any $S_i$ is the lc place of $(X,\Delta+\frac{1}{N}D_{u_i})$ for some $u_i\in V$.
  Replacing $V$ by an irreducible closed subset, we can further assume the set $\{u_i\}$ form a dense set of points on $V$. We may further resolve $V$ to be smooth. 

Applying  \eqref{p-familyresol} to $(X\times V,\Delta\times V+\frac{1}{N}{D})$ over $V$, after shrinking $V$ to an open set and replacing it by a finite \'etale covering, we can assume $(X\times V,\Delta\times V+\frac{1}{N}{D})\to V$ admits a fiberwise log resolution $\mu_V\colon {Y}\to (X\times V,\Delta\times V+\frac{1}{N}{D})$ over $V$ with the exceptional divisor $E=\sum^k_{i=1}E_i$ being simple normal crossing (see Definition-Lemma \ref{l-logresol}). We choose $\Gamma\subset E$ to be the subdivisor given by the components  with log discrepancy 0 with respect to $(X\times_B V, \Delta\times_B V+\frac{1}{N}D)$.
We also denote by $K:=K(V)$ the function field of $V$, and $\eta$ the point $x\times {\rm Spec}(K) \in X\times V$. 

For any point $u_i$, since $S_i$ computes the log canonical threshold of $(X,\Delta+\frac{1}{N} D_{u_i}(=\Phi_i))$, using the identification in \eqref{e-dualcomplexes}, $S_i\in i_{X_s,\Delta_s}(\cD(\Gamma|_{Y_{u_i}}))\times \bR_{>0}$ can be regarded as a restriction of a divisor corresponding to a point on $i_{X,\Delta}(\cD(\Gamma))\times \bR_{>0}$, whose restriction over the generic point ${\rm Spec}(K)$ of $V$ will yield a divisor, denoted by $T_i$. Thus, the valuations $\frac{1}{A_{X_K,\Delta_K}(T_i)}\ord_{T_i}$ are contained in the image of the fixed dual complex 
$i_{X_K,\Delta_K}(\cD(\Gamma|_{Y_K}))$  for all $i$.

Since $\cD(\Gamma)$ is compact, after passing to an infinite subsequence of $i$, the valuations $\frac{1}{A_{X_K,\Delta_K}(T_i)}\ord_{T_i}$ converges to a quasi-monomial valuation $w$ over $X_K$.

We claim that the restriction of $w$ to $K(X)\subset K(X_K)$ is $v$. In fact, if for any $f\in R$, we denote by $f_K$ its image under the injection $K(X)\subset K(X_K)$. Then Lemma \ref{l-valfamily} implies that after passing to an infinite subsequence, 
$$w(f_K)=\lim_{i\to \infty}\frac{1}{A_{X_K,\Delta_K}(T_i)}\ord_{T_i}(f_K)=\lim_{i\to \infty}\frac{1}{A_{X,\Delta}({S}_i)}\ord_{S_i}(f)=v(f).$$

By the Zariski-Abhyankar inequality (see \eqref{e-zainequ}), 
$${\rm rat. rk}(w)+{\rm tr. deg}(w)\le {\rm rat. rk}(v)+{\rm tr. deg}(v)+{\rm tr. deg}(K(X_K)/K(X)).$$
Since $w$ is Abhyankar, the left hand side is equal to $\dim(X)+\dim(V)$. Therefore, 
$${\rm rat. rk}(v)+{\rm tr. deg}(v)=\dim(X),$$ thus $v$ is an Abhyankar valuation on $K(X)$, which is the same as saying that it is a quasi-monomial valuation. 
\end{proof}

\begin{lem}\label{l-valfamily}The notation as above, for any $f \in R$,  $\ord_{T_i}(f_K)\le\ord_{S_i}(f)$, and the equality holds for infinitely many $i$.
\end{lem}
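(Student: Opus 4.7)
The key idea is to compute both $\ord_{T_i}(f_K)$ and $\ord_{S_i}(f)$ from a \emph{single} Cohen-type expansion of $\mu_V^* f$ along the common stratum $Z \subset Y$, and then to read off the inequality from specialization of coefficients and the equality from the density of $\{u_i\} \subset V$ combined with a pigeonhole.

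Under the identification \eqref{e-dualcomplexes}, $\ord_{T_i}$ and $\ord_{S_i}$ are quasi-monomial valuations described by the \emph{same} weight vector $\alpha_i \in \bR_{\ge 0}^r$, one at the generic point of a stratum of $Y_K$ and the other at its specialization on $Y_{u_i}$. Passing to a subsequence, I may assume that all the $T_i$'s lie in the relative interior of a single face of $\cD(\Gamma|_{Y_K})$ whose corresponding stratum is some $Z \subset Y$, that the limit $w$ also lies in the interior of that face (otherwise shrink it), and that the coordinates $\alpha_{i,j}$ are uniformly bounded below by some $\delta > 0$. Let $\eta_Z$ be the generic point of $Z$ and $y_1, \ldots, y_r$ local equations at $\eta_Z$ for the components $E_{j_1}, \ldots, E_{j_r}$ through $Z$. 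By the fiberwise log smoothness built into Definition-Lemma \ref{l-logresol}, $Z \to V$ is smooth and the $y_k$'s restrict to regular parameters at the generic point $\eta_u$ of each fiber $Z_u$. Writing
\[
\mu_V^* f \;=\; \sum_{\beta \in \bZ_{\ge 0}^r} c_\beta\, y^\beta \quad \text{in } \widehat{\cO_{Y,\eta_Z}}, \qquad c_\beta \in K(Z),
\]
and shrinking $V$ so that the relevant $c_\beta$'s are regular along every $Z_{u_i}$, the restriction to $Y_{u_i}$ gives the analogous expansion with coefficients $c_\beta(u_i) \in K(Z_{u_i})$. By the definition \eqref{e-quasimonomial} of quasi-monomial valuations,
\[
\ord_{T_i}(f_K) = \min\{\langle \alpha_i, \beta \rangle : c_\beta \ne 0\}, \qquad \ord_{S_i}(f) = \min\{\langle \alpha_i, \beta \rangle : c_\beta(u_i) \ne 0\},
\]
and the inequality $\ord_{T_i}(f_K) \le \ord_{S_i}(f)$ is immediate because $c_\beta = 0$ forces $c_\beta(u_i) = 0$, so the second minimum is taken over a subset of the indexing set of the first.

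For the equality on an infinite subsequence, continuity of $v \mapsto v(f_K)$ in the valuation topology (applied to the principal ideal $(f_K)$) gives $\ord_{T_i}(f_K) \to w(f_K) = v(f) < \infty$, so these values are uniformly bounded by some $M$. Together with $\alpha_{i,j} \ge \delta$, this confines every minimizer $\beta^{(i)}$ of the first minimum to the finite box $\{|\beta| \le M/\delta\}$, and pigeonhole produces a single $\beta^{*} \in \bZ_{\ge 0}^r$ with $\langle \alpha_i, \beta^{*} \rangle = \ord_{T_i}(f_K)$ for infinitely many $i$. Since $c_{\beta^{*}} \in K(Z)$ is nonzero, the open subset $U_{\beta^{*}} \subset V$ on which $c_{\beta^{*}}$ specializes to a nonzero element is Zariski dense; as $\{u_i\}$ is dense in $V$, infinitely many of the indices just selected lie in $U_{\beta^{*}}$, and for those $c_{\beta^{*}}(u_i) \ne 0$ forces $\ord_{S_i}(f) \le \langle \alpha_i, \beta^{*} \rangle = \ord_{T_i}(f_K)$, which together with the first step yields equality. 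The main obstacle is the rigorous identification, via the fiberwise log smoothness of Definition-Lemma \ref{l-logresol}, of the global expansion along $\eta_Z$ with the fiber expansions at $\eta_{u_i}$ under specialization: one must check that $\widehat{\cO_{Y_{u_i},\eta_{u_i}}}$ matches the appropriate reduction of $\widehat{\cO_{Y,\eta_Z}}$ compatibly with the chosen parameters $y_k$, and that the specialization $c_\beta \mapsto c_\beta(u_i)$ is controlled uniformly across the infinite family $\{u_i\}$.
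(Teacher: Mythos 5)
Your approach is genuinely different from the paper's, and it has a real gap in the equality step. The paper avoids the whole power-series/pigeonhole machinery by passing to a \emph{further} log resolution $W$ of $\big(Y, E+\mu_{V*}^{-1}(p_1^*(\operatorname{div}(f)+\Delta))\big)$, where $p_1\colon X\times V\to X$, and observing that over a nonempty open $V^{\circ}\subset V$ (after an \'etale cover) this $W$ becomes a fiberwise log resolution; then the comparison $\ord_{T_i}(f_K)=\ord_{S_i}(f)$ for $u_i\in V^{\circ}$ is exactly the content of \cite[Proof of Lemma 4.6]{JM-qmvaluations}. The point of resolving $\operatorname{div}(f)$ as well is that after pullback to $W$ the function $f$ becomes (locally) a monomial in the snc coordinates, so $\ord_{T_i}(f_K)$ and $\ord_{S_i}(f)$ are each given by the \emph{same} linear form in the weight vector with coefficients read off the resolution; this works simultaneously on \emph{all} faces, which produces a single open $V^{\circ}$ independent of the face to which $T_i$ belongs.

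The gap in your argument is the density step. You pigeonhole to a fixed $\beta^{*}$ with $\langle\alpha_i,\beta^{*}\rangle=\ord_{T_i}(f_K)$ for $i$ in some infinite subset $I_1$, and then assert that since $\{u_i\}_{i}$ is dense in $V$ and $U_{\beta^{*}}$ is open dense, infinitely many $i\in I_1$ satisfy $u_i\in U_{\beta^{*}}$. That does not follow: density of the full sequence $\{u_i\}$ gives no control over the subsequence indexed by $I_1$, which could lie entirely inside the proper closed set $V\setminus U_{\beta^{*}}$. The same issue already appears one step earlier, when you pass to a subsequence to confine all $T_i$ to the relative interior of a single face: that subsequence of $u_i$'s need not be dense either, so even replacing the pigeonhole by the cleaner ``take $V^{\circ}=\bigcap_{|\beta|\le B,\,c_\beta\neq 0} U_\beta$'' does not automatically rescue the argument. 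Two smaller points: (i) the convergence that gives you the bound is $\frac{1}{A_{X_K,\Delta_K}(T_i)}\ord_{T_i}(f_K)\to w(f_K)$ rather than $\ord_{T_i}(f_K)\to w(f_K)$; the weight vectors $\alpha_i$ living in the compact dual complex are the normalized ones, so the pigeonhole bound should be carried out after this rescaling (this is fixable since $A_{X_K,\Delta_K}(T_i)=A_{X,\Delta}(S_i)$). (ii) The specialization of the Cohen expansion that you flag at the end is indeed a genuine obstacle which the paper sidesteps precisely by introducing $W$; without it, one must control infinitely many coefficients $c_\beta$ simultaneously across the fibers, which is delicate. In short: the inequality is fine, but the equality step needs to be reorganized so that a single open $V^{\circ}$ (not depending on a chosen face or a chosen $\beta^{*}$) certifies equality, and then density of the full sequence $\{u_i\}$ can be invoked as in the paper.
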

\begin{proof}The first inequality is straightforward. To see the equality, we can take a log resolution $W$ of $\big(Y, E+\mu_{V*}^{-1}(p_1^*({\rm div}(f)+\Delta )) \big)$ where $p_1\colon X\times V\to X$. There is an open set $V^{\circ}\subset V$, such that 
$$W\times_V V^{\circ}\to \big(Y, E+\mu_{V*}^{-1}(p_1^*({\rm div}(f)+\Delta )) \big)\times_V V^{\circ}\to V^{\circ}$$
could yield a fiberwise log resolution after a finite \'etale base change. Now it follows from \cite[Proof of Lemma 4.6]{JM-qmvaluations} that  for any $u_i\in V^{\circ}$, we have  $\ord_{T_i}(f_K)=\ord_{S_i}(f)$. 
\end{proof}

\bigskip

\begin{proof}[Proof of Theorem \ref{t-JMconjecture}] Let $w$ compute the log canonical threshold of $\fa_{\bullet}$ on $(X,\Delta)$ (see \cite[Theorem 7.3]{JM-qmvaluations}) with $c_{X}(w)=\eta$.  We can replace $\fa_{\bullet}$ by $\fa^{w}_{\bullet}$ (see Lemma \ref{l-enlarge}) and localize at $\eta$, thus we reduce to  the case that $\fa_{\bullet}$ is a graded sequence of $\fm_{\eta}$-primary ideals where $\fm_{\eta}$ is the maximal ideal on a local ring of an essentially finite type.

Then we can apply Proposition \ref{p-approxlc} which says that there exists a valuation $v$ that can be written as the limit of a sequence of valuations with the form $c_j\cdot \ord_{S_j}$ for  Koll\'ar components $\{S_j\}$ such that $A_{X,\Delta}(v)=A_{X,\Delta}(w)$, $v\ge w$, and $v$ also calculates the log canonical thresholds of $\fa_{\bullet}$. By Lemma \ref{l-order} , there exists $\delta>0$ such that $c_j\cdot \ord_{S_j}(\fm_x)>\delta$. Thus by Theorem \ref{t-quasi-monomial},  $v$ has to be quasi-monomial. 
\end{proof}
\begin{proof}[Proof of Theorem \ref{t-minimizing}] By Theorem \ref{t-JMconjecture}, we know for a minimizer $w$, there exists a quasi-monomial valuation $v$ which computes the log canonical threshold of $\fa^w_{\bullet}$. Since $w$ is a minimizer of $\hvol_{(X,\Delta),x}$, we conclude $v=\lambda w$ by \cite[Lemma 4.7]{Blu-Existence} for some $\lambda>0$.
\end{proof}

\begin{rem}\label{r-existence}
In \cite{Blu-Existence}, to show the existence of the minimizer, there is a technical assumption that the ground field $k$ has to be uncountable. Our approach can indeed remove this assumption. 

More precisely, we can always find a sequence of Koll\'ar component $S_i$ such that $\lim_i \hvol(S_i)=\inf \hvol(v)$ (see \cite[Lem. 3.8]{LX-SDCI}). 
Therefore, after passing to a further infinite subsequence, as in the proof of Theorem  \ref{t-quasi-monomial}, we can assume there is a family $(X\times V, \Delta\times V+\frac{1}{N}D)\to V$ which admits a fiberwise log resolution, and $S_i$ is an lc place of $(X,\Delta+\frac{1}{N}D_{u_i})$ for some $u_i\in V$. Fix a closed point $u\in V$, then as before, $S_i$ yields a divisor $T_i$ which is a lc place of $(X,\Delta+\frac{1}{N}D_u)$ and yields the same point as $S_i$ under the correspondence \eqref{e-dualcomplexes}. By Theorem \ref{t-invlocal}, $\hvol(\ord_{S_i})=\hvol(\ord_{T_i})$. Thus $w_i:=\frac{1}{A_{X,\Delta}(T_i)}\ord_{T_i}$ has a limit $w$.
Since $\vol(\cdot )$ is continuous on a dual complex by \cite[Corollary D]{BFJ-Izumi}, which implies that $\hvol(\cdot)$ is also continous, we have
$$\hvol(w)=\lim_i\hvol(\ord_{T_i})=\lim_i\hvol(\ord_{S_i})=\inf \hvol(v),$$
i.e. $w$ is a minimizer of $\hvol_{(X,\Delta),x}$.
\end{rem}

\section{Family version}
In this section, we will use the techniques  developed  in the previous section to study a $\mathbb{Q}$-Gorenstein family of klt singularities, and prove the normalized volume function is a constructible function. As a consequence, we obtain the openness of the K-semistable locus among a $\mathbb{Q}$-Gorenstein family of log Fano pairs. 

Let $(X,\Delta)\to B$ be family of klt singularities over a smooth base $B$ with a section $\sigma \colon B\to X$. 
\begin{lem}\label{l-proper}
 There is a uniform positive constant $\delta>0$ depending only on $B\subset (X,\Delta)$, such that for any geometric point $s\to B$, and a valuation $v_s\in \Val_{X_s, x_s}$ with $A_{X_s,\Delta_s}(v_s)<+\infty$, then 
$$\hvol(v_s)\cdot v_s(\fm_s)>\delta\cdot A_{X_s,\Delta_s}(v_s).$$   
\end{lem}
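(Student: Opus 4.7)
The claimed inequality is degree-one homogeneous under $v_s\mapsto \lambda v_s$, so normalizing $A_{X_s,\Delta_s}(v_s)=1$ reduces the statement to producing a single $\delta>0$ such that
$$\hvol(v_s)\cdot v_s(\fm_s)\geq \delta$$
for every geometric point $s\to B$ and every $v_s\in\Val^{=1}_{X_s,x_s}$ with $A<\infty$. For a \emph{single} klt singularity this is the content of Li's inequality \cite[Theorem 1.1]{Li-minimizer}; the real content here is uniformity over $B$.

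The plan is to argue by Noetherian induction on $B$, so it suffices to find a nonempty open $U\subset B$ on which a common $\delta$ works. I argue by contradiction: if no $\delta$ works near the generic point $\eta$ of some irreducible component of $B$, I can pick $s_i\to\eta$ and $v_i\in\Val^{=1}_{X_{s_i},x_{s_i}}$ with $\hvol(v_i)\cdot v_i(\fm_{s_i})\to 0$. Applying Proposition \ref{p-approxlc} fiberwise to the graded sequence $\fa^{v_i}_\bullet$, I replace each $v_i$, at the cost of an arbitrarily small multiplicative loss in the invariants, by a rescaled Koll\'ar component $\tfrac{1}{A_{X_{s_i},\Delta_{s_i}}(S_i)}\ord_{S_i}$. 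Theorem \ref{t-localcomplement}, applied with $n=\dim(X_s)$ and the finite coefficient set $I$ of $\Delta$ (both determined uniformly by $(X,\Delta)/B$), yields a single integer $N$ such that every $S_i$ is an lc place of $(X_{s_i},\Delta_{s_i}+\tfrac{1}{N}\Phi_i)$ for an effective Cartier divisor $\Phi_i$ with $N(K_{X_{s_i}}+\Delta_{s_i}+\Phi_i)\sim 0$. As in Proposition \ref{p-complement}, the $\Phi_i$ fit into a bounded algebraic family $D\subset X\times_B V\to V$; after shrinking $V$, Definition-Lemma \ref{l-logresol} and \eqref{p-familyresol} give a fiberwise log resolution of $(X\times_B V,\Delta\times_B V+\tfrac{1}{N}D)$ whose log canonical places assemble into a common dual complex $\cD(\Gamma)$.

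Via the identification \eqref{e-dualcomplexes}, each $S_i$ corresponds to a toroidal divisor $T_i$ on the log resolution of the generic fiber $X_\eta$. Theorem \ref{t-invlocal} gives $\hvol(T_i)=\hvol(S_i)$, and after refining the resolution so that it dominates the blow-up of the section $\sigma(B)\subset X$ (so that $\fm_s$ is cut out by a fixed toroidal ideal), the ratio $\ord_T(\fm)/A_{X,\Delta}(T)$ is likewise locally constant along toroidal divisors in the family. Therefore
$$\hvol(T_i)\cdot \frac{\ord_{T_i}(\fm_\eta)}{A_{X_\eta,\Delta_\eta}(T_i)} = \hvol(S_i)\cdot \frac{\ord_{S_i}(\fm_{s_i})}{A_{X_{s_i},\Delta_{s_i}}(S_i)} \longrightarrow 0.$$
By compactness of $\cD(\Gamma_\eta)$ (\cite[Proposition 3.9]{LX-SDCI}), a subsequence of the $T_i$ converges to a quasi-monomial valuation $v\in\Val^{=1}_{X_\eta,x_\eta}$ with $\hvol(v)\cdot v(\fm_\eta)=0$, contradicting Li's inequality on the single klt singularity $x_\eta\in(X_\eta,\Delta_\eta)$.

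The main obstacle is that $v_s(\fm_s)$ depends on the varying section $\sigma$ rather than on a fixed toroidal datum, so it does not \emph{a priori} spread out along $V$ in the way $A$ and $\hvol$ do. The key device is to refine the fiberwise log resolution of $(X\times_B V,\Delta\times_B V+\tfrac{1}{N}D)$ so that it also dominates the blow-up of $\sigma(B)$; with this refinement all three invariants entering the inequality become simultaneously rigid along $\cD(\Gamma)$, and the contradiction argument collapses the family statement to Li's inequality on a single fiber.
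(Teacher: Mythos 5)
Your proof takes a genuinely different route from the paper's: the paper simply cites \cite[Theorem 21]{BL-volumelower}, which obtains the uniform constant by redoing Li's original argument in \cite[Theorem 1.1]{Li-minimizer} with uniform inputs (a uniform Izumi-type estimate and a uniform multiplicity bound across the family), and does not use complements, Koll\'ar components, or dual-complex spreading-out at all. Your attempt instead tries to bootstrap the single-fiber statement through the machinery of Sections 3--4. The idea is appealing, but there is a genuine gap at the very first reduction.

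The problem is the step ``replace each $v_i$, at the cost of an arbitrarily small multiplicative loss in the invariants, by a rescaled Koll\'ar component.'' Proposition \ref{p-approxlc} controls only $\lct(\fa^{v_i}_{\bullet})$: it produces $v'_i=\lim_j \frac{1}{A(S_j)}\ord_{S_j}$ computing that log canonical threshold, but it does \emph{not} assert that $\hvol(S_j)\to\hvol(v_i)$ or even that $\hvol(S_j)$ stays bounded near $\hvol(v'_i)$. In fact, both relevant quantities move in the wrong direction along this approximating sequence: by Lemma \ref{l-valuation} one may arrange $v'_i\geq v_i$, hence $v'_i(\fm)\geq v_i(\fm)$, and $\hvol$ is only lower semicontinuous, so $\liminf_j \hvol(S_j)\geq \hvol(v'_i)$. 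Thus there is no reason the product $\hvol(S_j)\cdot\ord_{S_j}(\fm)/A(S_j)$ becomes small when $\hvol(v_i)\cdot v_i(\fm)/A(v_i)\to 0$; the ``small multiplicative loss'' claim is precisely what needs proof and is not supplied by any result in the paper. Note also that your contradiction hypothesis allows $\hvol(v_i)\to\infty$ (with $v_i(\fm)\to 0$ even faster), which is exactly the regime the Koll\'ar-component/boundedness machinery (Proposition \ref{p-familykollar}, the dual-complex identification, etc.) does not reach, since all of it is gated on a uniform bound $\hvol(\ord_{S})\leq C$. The later steps of your argument --- spreading $\hvol$ and $\ord(\fm)/A$ constantly along the toroidal divisors of a fiberwise log resolution that also dominates $\mathrm{Bl}_{\sigma(B)}X$, then invoking compactness of $\cD(\Gamma)$ and positivity/continuity of $\hvol$ and $\ord(\fm)$ on it --- are in the spirit of the proofs of Theorem \ref{t-quasi-monomial} and Theorem \ref{t-volumeconstruct} and could plausibly be made to work \emph{for the class of valuations that live on $\cD(\Gamma)$}, but without a legitimate reduction from arbitrary $v_i$ to such valuations the argument does not establish the lemma. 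As written, the proof does not close the loop, and I do not see a way to repair the reduction without essentially re-proving a uniform Izumi-type inequality, which is what \cite{BL-volumelower} actually does.
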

\begin{proof}This is a family version of the proper estimate in \cite[Theorem 1.1]{Li-minimizer}. See \cite[Theorem 21]{BL-volumelower} for a proof.
\end{proof}

The following statement is a generalization of Proposition \ref{p-complement}.

\begin{prop}\label{p-familykollar}
Let $B\subset (X,\Delta)\to B$ be a $\mathbb{Q}$-Gorenstein family of klt singularities over a smooth base $B$. Fix a positive number $C$. There is a family of Cartier divisors $D\subset X\times_BV$ over a finite type variety $\pi\colon V\to B$ and a positive number $N$, such that for a geometric point $s\to B$ and a Koll\'ar components $S_{s}$ over $(X_s,\Delta_s)$ with $\hvol(\ord_{S_s})\le C$, then there exists a point $u\in V\times_B\{s\}$ such that if we base change $(X_s,\Delta_s)$ and $S_s$ to $u$, $S_u$ is a log canonical place of the log canonical pair $(X_u,\Delta_u+\frac{1}{N}D_u)$, where $D_u:=D\times_V\{u\}$. 
\end{prop}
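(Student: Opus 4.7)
The plan is to adapt the argument of Proposition \ref{p-complement} to the family setting; the conceptual content is identical, and the extra work is mostly organisational.

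First, I extract uniform constants. Applying Lemma \ref{l-proper} to the valuations $\ord_{S_s}$ and invoking the hypothesis $\hvol(\ord_{S_s})\le C$ gives the uniform bound
\[
\ord_{S_s}(\fm_s) \;>\; \frac{\delta}{C}\cdot A_{X_s,\Delta_s}(S_s).
\]
Since $\dim X_s$ is constant and the coefficients of $\Delta_s$ lie in the fixed finite set coming from $\Delta$, Theorem \ref{t-localcomplement} yields a uniform integer $N_0$ such that, for each $(s,S_s)$, there exists an effective $\bQ$-divisor $\Psi_s$ on $X_s$ making $(X_s,\Delta_s+\Psi_s)$ lc with $S_s$ a log canonical place and satisfying $N_0(K_{X_s}+\Delta_s+\Psi_s)\sim 0$. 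Choose $r\in\bN$ with $r(K_{X/B}+\Delta)$ Cartier, put $N:=rN_0$, and pick $M\in\bN$ with $M\delta/C>N$. Then the fibrewise analogue of Claim \ref{claim-madic} holds: for any local equation $\psi_s$ of $N\Psi_s$ near $x_s$ and any $h_s\in \cO_{X_s,x_s}$ with $h_s-\psi_s\in \fm_s^M$, one has $\ord_{S_s}(h_s-\psi_s)\ge M\cdot\ord_{S_s}(\fm_s)>N\cdot A_{X_s,\Delta_s}(S_s)=\ord_{S_s}(\psi_s)$, so $\ord_{S_s}(h_s)=\ord_{S_s}(\psi_s)$.

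Second, I build the parameter space. Let $\cL:=\cO_X(-N(K_{X/B}+\Delta))$ and let $\cI\subset\cO_X$ be the ideal sheaf of $\sigma(B)$. After stratifying $B$ and passing to finite \'etale covers of the strata, we may assume that $\pi_\ast(\cL/\cI^M\cL)$ is locally free of some rank $m$ on $B$, that there is an affine open $U\subset X$ containing $\sigma(B)$ over which $\cL|_U$ is trivial, and that we have sections $g_1,\ldots,g_m\in H^0(U,\cL|_U)$ whose images form an $\cO_B$-basis of $\pi_\ast(\cL/\cI^M\cL)$. Set $V:=B\times(\bA^m\setminus\{0\})$ with coordinates $x_1,\ldots,x_m$ on $\bA^m$ and define the Cartier divisor $D\subset U\times_B V$ to be the zero locus of $\sum_{i=1}^m x_ig_i$; since the $g_i|_{X_s}$ are a basis of $\cO_{X_s,x_s}/\fm_s^M$, the section $\sum x_ig_i$ is nonzero on every fibre and $D$ is an honest relative Cartier divisor. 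Assembling the finitely many pieces arising from the stratification by disjoint union produces a single finite-type $V\to B$ together with the required $D$.

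Third, I verify the property. Given $(s,S_s)$ with $\hvol(\ord_{S_s})\le C$, apply Theorem \ref{t-localcomplement} to get $\Psi_s$ and a local equation $\psi_s$ for $N\Psi_s$ near $x_s$; expand $\psi_s$ modulo $\fm_s^M$ in the basis $g_1|_{X_s},\ldots,g_m|_{X_s}$ to obtain $(c_1,\ldots,c_m)\neq 0$ with $\psi_s\equiv \sum c_ig_i|_{X_s}\pmod{\fm_s^M}$, and let $u:=(s;c_1,\ldots,c_m)\in V$. The approximation principle established above gives $\ord_{S_s}(\sum c_ig_i|_{X_s})=\ord_{S_s}(\psi_s)=N\cdot A_{X_s,\Delta_s}(S_s)$, so $A_{X_s,\Delta_s+\frac{1}{N}D_u}(S_s)=0$; adjunction on the plt blow-up extracting $S_s$ together with inversion of adjunction then show that $(X_s,\Delta_s+\frac{1}{N}D_u)$ is lc with $S_s$ as a log canonical place, as required.

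The main obstacle is the bookkeeping needed to make the local recipe of Proposition \ref{p-complement} vary algebraically over $B$: one must ensure that $\pi_\ast(\cL/\cI^M\cL)$ has constant rank and that $\cL$ is trivialisable near $\sigma(B)$. Both issues are handled by a finite stratification of $B$ followed by \'etale or Zariski localisation on each stratum, after which the local argument proceeds verbatim fibre by fibre.
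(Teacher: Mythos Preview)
Your proof is correct and follows essentially the same approach as the paper's. One difference worth noting: the paper additionally applies Lemma \ref{l-lctfamily} to cut $V$ down to the constructible locus where $\lct(X_u,\Delta_u;D_u)=\tfrac{1}{N}$, so that $(X_u,\Delta_u+\tfrac{1}{N}D_u)$ is lc for \emph{every} $u\in V$; this stronger property is not required by the statement as written but is tacitly used in the proof of Theorem \ref{t-volumeconstruct}.
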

\begin{proof} Let $\delta$ be the constant as in Lemma \ref{l-proper} and $\delta_0=\frac{\delta}{C}>0$. Then it follows from our assumption $\hvol(\ord_{S_s})\le C$ that
$$\frac{1}{A_{X_s,\Delta_s}(S_s)}\ord_{S_s}(\fm_s)\ge \delta_0.$$
Fix $M$ such that $M\delta_0>N:=r N_0$, where $N_0$ is the constant  given by Lemma \ref{t-localcomplement} which only depends on the dimension of $X_s$ and the coefficients of $\Delta$ and $r$ is a positive integer such that $r(K_X+\Delta)$ is Cartier.
By shrinking $B$, we can assume $B={\rm Spec}(T)$, $\mathcal{O}_{X}/(\fm_{\sigma(B)})^M$ is a free $T$-module with a basis $[g_1],...,[g_m]$ for $g_i\in \mathcal{O}_{B,X}$.

Let ${E}\to U:= (\mathbb{A}^m\setminus\{0\})_T$ be the space, such that over point $t=(t_1,...,t_m)\in U$, the fiber $E_t$ parametrises the divisor of $(\sum^m_{j=1}t_jg_j=0)$. 
For the family 
$$(X\times_B  U, \Delta\times_B  U+ {E} ) \xrightarrow[]{} U\xrightarrow[]{\pi} B,$$
by Lemma \ref{l-lctfamily}, there is a constructible set $V\subset U$ such that for a point $u\to U$ factors through $V$ if and only if
$$\lct(X_u, \Delta_u; {E}_u)=\frac{1}{N}. 
$$

Let $D:=E\times_U V$. If  a Koll\'ar component $S_{s}$ over $(X_s,\Delta_s)$ for a geometric point $s\to B$ satisfies
$$\hvol_{X_s,\Delta_s}(\ord_{S_s})<C,$$ 
then the same argument for Proposition \ref{l-order} shows that $S_s$ is a log canonical place of the pair $(X_s,\Delta_s+\frac{1}{N}D')$ for some $D'={\rm div}(g)$ with $g=\sum^m_{i=1} \lambda_i(g_i)_s$ for some $\lambda_i\in k(s)$, where $(g_i)_s$ is the reduction of $g_i$ under the morphism  $\mathcal{O}_{B,X} \to \cO_{s, X_s}$.  Thus $D'=D_u$ for $u=(\lambda_1,...,\lambda_m)$ over $s\to B$.
\end{proof}
\begin{proof}[Proof of Theorem \ref{t-volumeconstruct}]
Let $C=n^n+1$, then we know for any geometric point $s\in B$, $\hvol(s,X_s,\Delta_s)< C$ by \cite[Theorem 1.6]{LX-cubic}. Apply Proposition \ref{p-familykollar} to such $C$ and let $(D\subset X\times _BV)\to V$ be the family of divisors given by it. Then after stratifying the base $V$ into a disjoint union of  finitely many constructible subsets and taking finite \'etale coverings, we can assume there exists a decomposition $V=\bigsqcup_{\alpha} V_{\alpha}$ into irreducible smooth strata $V_{\alpha}$ such that for each $\alpha$, $\big(X\times_B V_{\alpha},{\rm Supp} (\Delta\times_B V_{\alpha}+\frac{1}{N}D)\big)$ admits a fiberwise log resolution $\mu \colon Y_{\alpha} \to X\times_B V_{\alpha}$ over $V_{\alpha}$ with a simple normal crossing exceptional divisor $E_{\alpha}=\sum^k_{i=1}E_i$ (see Definition-Lemma \ref{l-logresol}). We choose $\Gamma_{\alpha}\subset E_{\alpha}$ to be the subdivisor given by the components  with log discrepancy 0 with respect to $(X\times_B V, \Delta\times_B V+\frac{1}{N}D)$. Moreover,   by the Noether induction, we can shrink $B$ and assume each $V_\alpha\to B$ is surjective. 

 Then for any geometric point $s\to B$ and a Koll\'ar component  $S_i$ over $(X_s, \Delta_s)$ with $\hvol(\ord_{S_i})\le C$, Proposition \ref{p-familykollar} implies that there is a point $u_i\in V_{\alpha}\times_B \{s\}$ such that $(X_{u_i}, \Delta_{u_i}+\frac{1}{N}D_{u_i})$ is log canonical and $S_{u_i}$ is a log canonical place of the pair, where $(X_{u_i}, \Delta_{u_i})$ and $S_{u_i}$ are the base changes of $(X_{s},\Delta_s)$ and $S_i$ over $u_i$.
  Define $(Y_{u_i}, E_{u_i}):=(Y_{\alpha},E_{\alpha})\times_{V_{\alpha}}\{u_i\} $. Since $\mu_{u_i}\colon Y_{u_i}\to (X_{{u_i}}, \Delta_{u_i}+\frac{1}{N}D_{u_i})$ is a log resolution, we know that $S_i$ will be a toroidal divisor over $(Y_{u_i}, E_{u_i})$, which then yields a toroidal divisor $T_i$ over $(Y_{\alpha},E_{\alpha})$ whose corresponding valuation is contained in  $i_{X,\Delta}(\cD(\Gamma_{\alpha}))\times {\mathbb{R}_{>0}},$ such that $S_i$ is given by the restriction of $T_i$ over $u_i$.

For any  toroidal divisor $T$  with  $\ord_{T}\in i_{X,\Delta}(\cD(\Gamma_{\alpha}))\times {\mathbb{R}_{>0}},$
 since 
 $$\big(X\times_B V_{\alpha}, \Delta\times_B V_{\alpha}+(\frac{1}{N}-\epsilon)D\big)$$ is klt for any  $\frac{1}{N}\ge\epsilon>0$, and we can choose $\epsilon$ sufficiently  small such that 
$$A_{X\times_BV_{\alpha}, \Delta\times_BV_{\alpha}+(\frac{1}{N}-\epsilon)D}(T)<1,$$ by Theorem \ref{t-invlocal} we conclude that for  any $u\in V_{\alpha}$, the function
$$u\in V_{\alpha}\to \hvol_{X_u,\Delta_u}(T_u)$$ is a constant function on $V_{\alpha}$. 
Thus,  for each fixed $\alpha$, the function
$$v_{\alpha}\colon u\in V_{\alpha}\to\inf_{T_u}\ \{\hvol_{X_u,\Delta_u}(\ord_{T_u})\  |\  \ord_{T_u}\in i_{X_u,\Delta_u}(\cD(\Gamma|_{Y_u}))\times \mathbb{R}_{>0} \}$$
is a constant function. 

For any geometric point $s\colon \Spec k\to B$, since the point can be lifted to $\Spec k\to V_{\alpha}$ for any $\alpha$, then
\begin{eqnarray*}
\hvol(s,X_s,\Delta_s)&=&\inf_{S_i}\{\ \hvol_{X_s,\Delta_s}(\ord_{S_i})\  |\mbox{ Koll\'ar components $S_i$ with } \hvol(S_i)\le n^n+1 \}\\
 &\ge&\min_{\alpha} \{v_{\alpha}\}\\
 &\ge & \hvol(s,X_s,\Delta_s),
\end{eqnarray*}
where the second relation holds since
any Koll\'ar component over $(X_s,\Delta_s)$ with normalised volume at most $n^n+1$ is isomorphic to $T_u$ for some $u\in V_{\alpha}$ mapping to $s$; and the last relation follows from  that there is a lift $\Spec k \to V_{\alpha}$ of $s\to B$ for every $\alpha$. 
Thus $s\to \hvol({s},X_{{s}},\Delta_{{s}})$ is a constant on all geometric points after we shrink $B$ to a nonempty open set, which implies it is  constructible. 
\end{proof}
It is known that $\hvol(s,X_s,\Delta_s)$ is also lower-semicontinous by \cite[Theorem 1]{BL-volumelower}. Indeed, with Theorem \ref{t-volumeconstruct}, to see this we only need the weaker result that the normalized volume does not increase under a specialization of singularities.

\medskip

It is well known that Theorem \ref{t-openness} follows from Theorem \ref{t-volumeconstruct} via the cone construction. 
\begin{proof}[Proof of Theorem \ref{t-openness}]
We  take the relative cone over $ {Y}:=C(X/B, -r(K_{X}+\Delta))$ for sufficiently divisible $r$, i.e.
$Y={\rm Spec}_{\cO_{B}}\bigoplus^{\infty}_{m=0}\big(f_*(-rm(K_{X}+\Delta) )\big),$ 
and we can pull back the base $\Delta$ to get a boundary $\Delta_{Y}$. It has a section $B\to (Y,\Delta_{Y})$ given by the  cone vertices and makes it a $\mathbb{Q}$-Gorenstein family of klt singularities. 

For any $s\in B$, 
$$\hvol (s, Y_s,\Delta_{Y_s})\le \hvol_{(Y_s,\Delta_{Y_s}),s}(\ord_{V_s})=\frac{1}{r}(-K_{X_s}-\Delta_s)^n,$$
where $V_s$ is the divisor obtained by blowing up the vertex. By Proposition \ref{p-cone}, for any geometric point $s\in B$, $(X_s,\Delta_s)$ is K-semistable if and only if the equality holds. 

 By Theorem \ref{t-volumeconstruct} and \cite[Theorem 1]{BL-volumelower}, $\hvol(s, Y_s,\Delta_{Y_s})$ is constructible and lower semi-continuous. Therefore, there is an open set $B^{\circ}$ of $B$, such that $\hvol(s, Y_s,\Delta_{Y_s})$ takes the possibly maximal value $\frac{1}{r}(-K_{X_s}-\Delta_s)^n$, which is precisely the locus where the geometric fibers are K-semistable. 
\end{proof}
\begin{rem}\label{r-nonreduced}
When $\Delta=0$, the proof of Theorem \ref{t-openness} clearly can also be applied to any locally stable family of klt Fano varieties (see Definition \ref{d-kollar}).
\end{rem}

It is known that openness of K-semistability was the last missing ingredient to prove Theorem \ref{t-goodmoduli} (see \cite[Corollary 1.2]{ABHX-reductivity}). An outline of the construction is given in \cite{BX-uniqueness} for locally stable families of uniformly K-stable Fano varieties. To get  Theorem \ref{t-goodmoduli}, we only need to replace the ingredients, and then the same argument applies.  
\begin{proof}[Proof of Theorem \ref{t-goodmoduli}]
We follow the proof of \cite[Corollary 1.4]{BX-uniqueness}. Replacing the uniform K-stability by the K-semistability, and \cite{BL-openness} by Theorem \ref{t-openness} (see Remark \ref{r-nonreduced}), we conclude that $\mathfrak{X}^{\rm kss}_{n,V}$ is parametrised by an Artin stack of finite type over $k$. Then by \cite[Theorem 1.1]{BX-uniqueness} and \cite[Corollary 1.2]{ABHX-reductivity}, we know the good moduli space ${X}^{\rm kps}_{n,V}$ exists and is separated. 
\end{proof}


\bibliography{refxu} 
\bibliographystyle{ieeetr}

\end{document}